\newtheorem{theorem}{Theorem}[section]
\newtheorem{lemma}[theorem]{Lemma}
\newtheorem{proposition}[theorem]{Proposition}
\newtheorem{corollary}[theorem]{Corollary}
\providecommand{\customgenericname}{}
\newcommand{\newcustomtheorem}[2]{%
	\newenvironment{#1}[1]
	{%
		\renewcommand\customgenericname{#2}%
		\renewcommand\theinnercustomgeneric{##1}%
		\innercustomgeneric
	}
	{\endinnercustomgeneric}
}
\theoremstyle{definition}
\newtheorem{example}[theorem]{Example}
\newtheorem{definition}[theorem]{Definition}
\newcommand{\kron}[2]{\left(\frac{#1}{#2}\right)}
\newcommand{\bZ}{\mathbb Z}
\newcommand{\bQ}{\mathbb Q}
\newcommand{\bN}{\mathbb N}
\newcommand{\bC}{\mathbb C}
\newcommand{\Rbar}{\overline{R}}
\newcommand{\Inv}{\textup{Inv}}
\newcommand{\Cl}{\textup{Cl}}
\newcommand{\modulo}[1]{\textup{ (mod }#1)}
\newcommand{\st}{\textup{ s.t. }}
\newcommand{\ow}{\textup{otherwise}}
\newcommand{\spec}{\textup{Spec}}
\newcommand{\cO}{\mathcal{O}}
\newcommand{\Irr}{\textup{Irr}}
\newcommand{\term}[1]{\textbf{\textup{#1}}}
\newcommand{\quot}[2]{\large\sfrac{#1}{#2}\normalsize}
\title[Multiplicative Subring Relations]{Multiplicative Relationships of Subrings and their Applications to Factorization}
\author{Grant Moles}
\date{November 2024}
\begin{document}
	
	\begin{abstract}
		When studying the properties of a ring $R$, it is often useful to compare $R$ to other rings whose properties are already known. In this paper, we define three ways in which a subring $R$ might be compared to a larger ring $T$: being associated, being ideal-preserving, or being locally associated. We then explore how these properties of a subring might be leveraged to give information about $R$, including applications to the field of factorization. Of particular interest is the result that an order in a number field is associated if and only if it is both ideal-preserving and locally associated. We conclude with a discussion of how these properties are realized in the case of orders in a number field and how such orders might be found.
	\end{abstract}
	
	\maketitle
	
	\section{Introduction}
	
	Suppose that we have a ring $T$ with subring $R$. If we know a great deal of information about the structure of $R$, we might be able to determine information about $T$ by considering relationships that might exist between $R$ and $T$. For instance, if we happen to know that $T$ is a polynomial ring (\cite{coykendallpolynomial}, \cite{CO1}, \cite{Co2005}), a ring of formal power series (\cite{MO1}, \cite{dissertation}, \cite{radicalconductor}, \cite{Co2005}), or an overring of $R$ (\cite{overrings}), we can often extrapolate useful information about factorization in $T$ from known properties of $R$. This paper will explore the inverse: if we know information about $T$, what relationships between $T$ and its subring $R$ might we use to extrapolate information about $R$? In particular, we will look at relationships among the multiplicative structures of these rings, primarily as they relate to factorization.
	
	The following theorem from Franz Halter-Koch \cite{halter-koch} provides a motivating example. We will state this theorem using a different notation than in the original paper in order to match more closely what we will see later in this paper.
	\begin{theorem}
		\label{halter}
		Let $R$ be the order of index $n$ in the quadratic number field $K=\bQ[\sqrt{d}]$ for some $n\in\bN\backslash\{1\}$. That is, if $d\equiv 1\modulo{4}$, then $R=\bZ[n\frac{1+\sqrt{d}}{2}]$; otherwise, $R=\bZ[n\sqrt{d}]$. Then $R$ is a half-factorial domain if and only if the following conditions all hold:
		\begin{enumerate}
			\item $\Rbar=\cO_K$ is an HFD;
			\item $\Rbar=R\cdot U(\Rbar)$;
			\item $n$ is either a prime number or twice an odd prime.
		\end{enumerate}
	\end{theorem}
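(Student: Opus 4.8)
The plan is to reduce half-factoriality to the existence of an additive length function and to localize the whole analysis at the conductor $\mathfrak{f}=n\cO_K$, which for the quadratic order of index $n$ is exactly $n\cO_K$. The key structural fact is that a prime of $R$ coprime to $\mathfrak{f}$ behaves exactly as in $\Rbar$ (the localizations agree), so every departure from length-uniqueness is concentrated at the finitely many primes of $R$ lying over the rational primes dividing $n$. Since $R$ is Noetherian, hence atomic, I would use the Carlitz--Zaks criterion: $R$ is an HFD if and only if there is a monoid homomorphism $\ell\colon(R\setminus\{0\})/U(R)\to(\bZ_{\ge 0},+)$ with $\ell(a)=1$ for every atom $a$. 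The entire theorem then becomes a statement about when such an $\ell$ exists.

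For necessity I would argue by contraposition, each time producing an element of $R$ with two atomic factorizations of unequal length. The conditions (1) and (2) are most naturally obtained together: I would first show that if (2) fails---some $\alpha\in\Rbar$ being no $\Rbar$-associate of any element of $R$---then norm relations force two factorizations in $R$ of different lengths; granting (2), elements of $\Rbar$ coprime to $\mathfrak{f}$ become $\Rbar$-associates of elements of $R$, so a length-violating pair in $\Rbar$ (which exists when $h_K\ge 3$, using that each ideal class contains primes away from $\mathfrak{f}$) transfers to $R$, yielding (1) via Carlitz. For (3), assuming (1) and (2), if $n$ were divisible by $4$, by the square of an odd prime, or by two distinct odd primes, I would write $n$ or a suitable factor of it as a product of atoms of $R$ in two ways of different length, exploiting the distinct splitting or ramification types of the primes over $n$; this is the step that isolates ``prime or twice an odd prime.''

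For sufficiency I would construct $\ell$ explicitly from the three hypotheses. Away from $\mathfrak{f}$, condition (1) and $h_K\le 2$ supply the usual Dedekind length function: assign weight $1$ to primes in the principal class and weight $\tfrac12$ to primes in the unique class of order two, which is integer-valued on principal ideals and gives each atom length $1$. At the primes over $n$, condition (3) restricts the possible local factorization types to a single controlled shape, and condition (2) guarantees that the unit discrepancy between $R$ and $\Rbar$ spawns no extra atoms; together they let me assign consistent integer weights there. Verifying additivity and the value $1$ on atoms across the gluing at the conductor then yields the length function, hence the HFD property.

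The main obstacle is precisely the local analysis at the primes dividing $n$: one must identify which elements of $R$ supported there are atoms and show that (2) and (3) are exactly the conditions permitting a consistent integer-valued length on them. Concretely this amounts to computing the structure of $(\Rbar/\mathfrak{f})^{\times}/(R/\mathfrak{f})^{\times}$ and its image in $\Pic(R)$ through the standard conductor exact sequence relating $\Pic(R)$ and $\Pic(\Rbar)=\Cl(\cO_K)$; once that local picture is in hand, the remaining verifications are bookkeeping.
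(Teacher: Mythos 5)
First, a point of order: the paper does not prove this statement. It is Halter-Koch's theorem, imported (with notational changes) from \cite{halter-koch} purely as motivation for the definitions that follow. So there is no in-paper proof to compare against, and your proposal must be judged on its own merits. On those merits, what you have written is a sensible road map---reduce to a Carlitz--Zaks length function, localize everything at the conductor $I=n\Rbar$, handle primes coprime to $I$ by the classical $h_K\le 2$ argument---but it is a road map, not a proof, and the missing pieces are exactly the parts that make the theorem hard.

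Concretely: (a) For the necessity of condition (2) you assert that if some $\alpha\in\Rbar$ has no $\Rbar$-associate in $R$, then ``norm relations force two factorizations in $R$ of unequal lengths.'' No mechanism is given, and none is obvious; this is one of the genuinely delicate steps in any proof of this theorem. (b) Your transfer of a length-violating pair from $\Rbar$ to $R$ under (2) is not the triviality you treat it as. If $x=a_1\cdots a_k=b_1\cdots b_m$ in $\Rbar$ and you choose units $u_i,v_j\in U(\Rbar)$ with $u_ia_i,\,v_jb_j\in R$, then $\bigl(\prod_i u_i\bigr)x$ and $\bigl(\prod_j v_j\bigr)x$ are two \emph{different} elements of $R$, differing by a unit of $\Rbar$ that need not lie in $R$; you do not automatically obtain two factorizations of a single element. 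The clean repair is ideal-theoretic: under (2) one has $\Cl(R)\cong\Cl(\Rbar)$ (in the paper's language this is the combination of Corollary \ref{ao iff ipo and lao} and Theorem \ref{la equiv conditions}), and one then runs Carlitz's counting argument inside $R$ on invertible ideals coprime to $I$---but that argument has to be made, not waved at. (c) For the necessity of (3) you defer every actual construction (the cases $4\mid n$, $p^2\mid n$, $pq\mid n$) to ``exploiting splitting types,'' and for sufficiency you defer the entire classification of atoms supported at the primes over $n$, then call the rest bookkeeping. Your own closing admission that this local analysis is ``the main obstacle'' is accurate: it is the heart of Halter-Koch's theorem, and it is precisely what the proposal does not supply.
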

	
	In this case, we note that we are relating the elasticity of an order in a number field $R$ to that of its integral closure $\Rbar$, the full ring of algebraic integers in $K$. This is particularly useful because of a result from Narkiewicz \cite{narkiewicz} which builds upon a classical result from Carlitz \cite{carlitz}, as follows:
	\begin{theorem}
		\label{davenport elasticity}
		Let $K$ be an algebraic number field, $R=\cO_K$, the ring of algebraic integers in $K$, and $G=\Cl(R)$, the ideal class group of $R$. If $\abs{G}=1$, then $R$ is a UFD and thus $\rho(R)=1$. Otherwise,
		$$\rho(R)=\frac{D(G)}{2},$$ where $D(G)$ is the Davenport constant of the finite abelian group $G$. In particular, $R$ is an HFD (i.e. $\rho(R)=1$) if and only if $\abs{G}\leq 2$.
	\end{theorem}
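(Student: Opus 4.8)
The plan is to reduce the computation of $\rho(R)$ to a purely combinatorial problem in the class group $G$, via the standard transfer to the monoid of zero-sum sequences, and then to read off the elasticity of that monoid in terms of the Davenport constant. First I would recall that $R=\cO_K$ is a Dedekind domain, so every nonzero ideal factors uniquely as a product of prime ideals. The trivial case $\abs{G}=1$ is then immediate: a Dedekind domain with trivial class group is a PID, hence a UFD, and a UFD has elasticity $1$. So I may assume $\abs{G}>1$ for the main computation.

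Next I would set up the transfer to the block monoid $\cB(G)$, the monoid of zero-sum sequences over $G$ under concatenation. The key dictionary is this: if $\alpha\in R$ is a nonzero nonunit and $(\alpha)=P_1\cdots P_\ell$ is the factorization of its principal ideal into (not necessarily distinct) primes, then the sequence of ideal classes $([P_1],\ldots,[P_\ell])$ sums to $0$ in $G$, precisely because $(\alpha)$ is principal. Under this correspondence, factoring $\alpha$ into irreducibles matches partitioning its associated sequence into minimal zero-sum subsequences, the atoms of $\cB(G)$: an element is irreducible exactly when its sequence cannot be split into two nonempty zero-sum pieces. The number-theoretic input that upgrades this dictionary to a genuine equivalence of factorization lengths — and the step I expect to be the main obstacle to state cleanly — is that every ideal class contains infinitely many prime ideals, so that every minimal zero-sum sequence is actually realized by some irreducible element, and conversely. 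This surjectivity (a consequence of the analytic theory, e.g. Chebotarev density) is what guarantees $\rho(R)=\rho(\cB(G))$.

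It then remains to show $\rho(\cB(G))=D(G)/2$. For the upper bound, suppose $U_1\cdots U_m=V_1\cdots V_n$ is an equality of products of atoms with $m\ge n$; after cancelling any atoms equal to the one-term sequence $(0)$ (which correspond to prime elements and occur equally often on both sides by unique factorization of the underlying sequence), every remaining atom has length between $2$ and $D(G)$. Writing $L$ for the common length of both sides, I obtain $2m\le L\le n\,D(G)$, whence $m/n\le D(G)/2$. For the lower bound, I would take a minimal zero-sum sequence $U=(g_1,\ldots,g_k)$ of maximal length $k=D(G)$, form its negative $U^{-}=(-g_1,\ldots,-g_k)$ (again an atom of length $D(G)$, with each $g_i\ne 0$ since an atom of length $\ge 2$ has no zero term), and observe that the concatenation $U\,U^{-}$ factors in two ways: as the product of the two atoms $U$ and $U^{-}$, and as the product of the $D(G)$ length-two atoms $(g_i,-g_i)$. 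This exhibits the ratio $D(G)/2$, so $\rho(\cB(G))=D(G)/2$ as claimed, and hence $\rho(R)=D(G)/2$.

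Finally, for the HFD characterization I would combine the computed value $\rho(R)=D(G)/2$ with the elementary facts that $D(G)=1$ exactly when $\abs{G}=1$, that $D(G)=2$ exactly when $G\cong\bZ/2\bZ$, and that $D(G)\ge 3$ whenever $\abs{G}\ge 3$. Thus $\rho(R)=1$ precisely when $\abs{G}\le 2$, which is the stated equivalence, recalling that in an atomic domain the condition $\rho(R)=1$ is equivalent to all factorizations of each element having equal length, i.e. to $R$ being an HFD. The case $\abs{G}=1$ is already subsumed by the UFD observation above, completing the argument.
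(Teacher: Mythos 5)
The paper does not actually prove this theorem: it is quoted as background, with the proof attributed to Carlitz \cite{carlitz} and Narkiewicz \cite{narkiewicz}, so there is no internal proof to compare against. Your argument is correct and is essentially the standard proof that those references formalize: transfer the factorization problem to the block monoid $\cB(G)$ of zero-sum sequences, using the fact (Chebotarev, or Landau's prime ideal theorem in classes) that every ideal class contains prime ideals so that atoms of $\cB(G)$ and irreducibles of $R$ correspond with matching factorization lengths, and then compute $\rho(\cB(G))=D(G)/2$ by the two-sided estimate you give. All the delicate points are handled: an atom of length $\geq 2$ contains no zero entry, so the $(0)$-atoms can be cancelled (they occur equally often on both sides since both products have the same underlying sequence), and the sequence $U\,U^{-}$ built from a maximal-length atom realizes the ratio $D(G)/2$. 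One detail worth writing out if you formalize this: after cancelling $k$ copies of the $(0)$-atom from both sides, your bound gives $m'/n'\leq D(G)/2$ for the reduced counts, and you should note that $(m'+k)/(n'+k)\leq m'/n'$ when $m'\geq n'$ (together with $D(G)\geq 2$ for $\abs{G}\geq 2$) to recover the bound for the original ratio; this is a one-line mediant inequality, not a gap in the approach.
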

	
	This theorem allows us to determine the elasticity of a number ring from its ideal class group. What Theorem \ref{halter} really allows us to do is take what we know about rings of algebraic integers, in particular whether they are half-factorial, and leverage that into determining whether a more generic order in a number field is half-factorial. This idea of using factorization properties of a number ring to study the orders contained within has proven to be a particularly rich vein of research, as in \cite{choi2024class}, \cite{Kettinger2024}, and \cite{primeconductor}.
	
	In more abstract terms, these results use a ring $T$ whose multiplicative structure is more well-understood, (in this case, a ring of algebraic integers) properties of that ring (in Theorem \ref{halter}, that it is an HFD), and relationships between the ring and a subring $R$ (in Theorem \ref{halter}, that $\Rbar=R\cdot U(R)$ and that the index is prime or twice an odd prime) to determine a property of $R$ (in Theorem \ref{halter}, that it is also an HFD).
	
	The following theorems from \cite{rago} and \cite{radicalconductor}, respectively, give generalizations of Halter-Koch's theorem which further demonstrate the utility of the relationship $\Rbar=R\cdot U(\Rbar)$ to factorization within orders in a number field.
	\begin{theorem}
		\label{rago hfd}
		Let $R$ be an order in a number field $K$ with conductor ideal $I$. Let $I=P_1^{a_1}\dots P_k^{a_k}$ be the factorization of $I$ into $\Rbar$-ideals, and denote $Q_i:=R\cap P_i$ for each $1\leq i\leq k$. Then $R$ is an HFD if and only if the following conditions hold:
		\begin{enumerate}
			\item $\Rbar$ is an HFD;
			\item $\Rbar=R\cdot U(\Rbar)$;
			\item For each $1\leq i\leq k$, $a_i\leq 4$, and letting $\pi_i$ be an arbitrary prime element of $\Rbar_{Q_{i}}$, $v_{\pi_i}(\Irr(R_{Q_i}))\subseteq\{1,2\}$. Moreover, if $P_i$ is a principal ideal, then $a_i\leq 2$ and $v_{\pi_i}(\Irr(R_{Q_i}))=\{1\}$.
		\end{enumerate}
	\end{theorem}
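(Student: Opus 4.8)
The plan is to analyze factorization in $R$ by localizing at the conductor and comparing with the Dedekind domain $\Rbar=\cO_K$. The organizing principle is that the primes of $R$ fall into two families: for a prime $Q$ of $R$ with $Q\not\supseteq I$, the localization $R_Q$ equals $\Rbar_P$ (a DVR) for the unique prime $P$ of $\Rbar$ lying over $Q$, so factorization away from the conductor is governed entirely by $\Rbar$; the only primes where $R$ and $\Rbar$ differ locally are the conductor primes $Q_i=R\cap P_i$. Thus any failure of half-factoriality in $R$ beyond what already occurs in $\Rbar$ must be detected at the finitely many primes $Q_1,\dots,Q_k$, which is precisely why condition (3) is local at these primes while conditions (1) and (2) are global.

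The main tool I would use is the length-homomorphism characterization of HFDs: since $R$ is Noetherian, hence atomic, $R$ is an HFD if and only if there is a monoid homomorphism $\ell\colon (R\setminus\{0\},\cdot)\to(\bN_0,+)$ that vanishes on $U(R)$ and takes the value $1$ on every irreducible. For the necessity direction I would assume $R$ is an HFD and rule out each failure in turn. If condition (2) failed, so that some $\alpha\in\Rbar$ were associate in $\Rbar$ to no element of $R$, I would manufacture an element of $R$ admitting two factorizations of unequal length by absorbing the ``missing'' unit of $\Rbar$ into two different $R$-factorizations, contradicting the HFD property. Granting (2)---which says every $\Rbar$-associate class meets $R$---I would deduce (1) and (3) by localization: the length homomorphism on $R$ induces one on $\Rbar$, giving half-factoriality of $\Rbar$ (equivalently $\abs{\Cl(\Rbar)}\le 2$ by Theorem \ref{davenport elasticity}), while passing to each $R_{Q_i}$ and reading off the admissible $\pi_i$-valuations of its irreducibles yields the numeric constraints of condition (3), patterned on the quadratic case of Theorem \ref{halter}.

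For sufficiency I would assume (1)--(3) and build the required length homomorphism on $R$ explicitly. Starting from the length homomorphism on $\Rbar$ guaranteed by (1), I would correct it at each conductor prime using the valuation data controlled by (3), and invoke (2) to guarantee that the corrected map is well defined on associate classes of $R$ and assigns $1$ to every $R$-irreducible. The numeric thresholds $a_i\le 4$ and $v_{\pi_i}(\Irr(R_{Q_i}))\subseteq\{1,2\}$, together with the sharper bounds $a_i\le 2$ and $v_{\pi_i}(\Irr(R_{Q_i}))=\{1\}$ in the principal case, are exactly what make this correction consistent: they bound how far the local valuation of an $R$-irreducible can deviate from its $\Rbar$-length.

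The hardest part will be the local analysis at the conductor primes underlying condition (3): classifying the irreducibles of $R_{Q_i}$, computing their valuations $v_{\pi_i}$, and proving that the stated numeric bounds precisely separate half-factorial from non-half-factorial local behavior. The subtlest point is the dichotomy between principal and non-principal $P_i$, which couples the local factorization data to the global class group of $\Rbar$ through Theorem \ref{davenport elasticity}; reconciling the local corrections across all conductor primes so that they patch into a single global length homomorphism, and verifying that the induced map on $\Rbar$ is genuinely well defined, is where the real work lies.
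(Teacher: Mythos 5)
Note first that the paper itself contains no proof of this statement: Theorem \ref{rago hfd} is quoted in the introduction as a known result from \cite{rago}, so there is no in-paper argument to compare yours against, and your proposal must stand entirely on its own.

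It does not stand: what you have written is a strategy outline in which every substantive step is asserted or explicitly deferred, and the deferred steps are the theorem. Concretely: (i) for the necessity of (2) you say you would ``manufacture an element of $R$ admitting two factorizations of unequal length by absorbing the missing unit,'' but no construction is given --- starting from $\alpha\in\Rbar$ with no $\Rbar$-associate in $R$, it is not specified what element of $R$ you would build nor why its factorizations would have unequal lengths; this implication is the hard core of Theorem \ref{halter} in the quadratic case and of its generalization, and it cannot be waved through. (ii) Your derivation of (1) from (2) requires the induced map $\ell'(ru)=\ell(r)$ on $\Rbar$ to be well defined, i.e.\ that $\Rbar$-associate elements of $R$ have equal $R$-factorization length; this is precisely the kind of statement the HFD hypothesis must be exploited to prove, and you flag it as ``where the real work lies'' without doing it. (iii) Nothing in the proposal derives the numerical content of (3): why the valuations of local irreducibles must lie in $\{1,2\}$, why $a_i\leq 4$, and why the bounds sharpen to $\{1\}$ and $a_i\leq 2$ exactly when $P_i$ is principal. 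Since (1) and Theorem \ref{davenport elasticity} force $\abs{\Cl(\Rbar)}\leq 2$, the natural length function on $\Rbar$ weights non-principal primes by $\frac{1}{2}$, and the entire theorem consists in showing that the stated local bounds are exactly what allow these weights to be corrected at the conductor primes into an integer-valued length homomorphism on $R$ taking the value $1$ on every $R$-irreducible (sufficiency), and conversely are forced by the existence of such a map (necessity); your ``correction at each conductor prime'' is never written down, and an $R$-irreducible supported at several conductor primes and at non-principal primes simultaneously is exactly the case your sketch cannot yet handle. Your scaffolding --- localization away from the conductor, where $R_Q=\Rbar_Q$ is a DVR, and the length-homomorphism characterization of HFDs for atomic domains --- is correct and plausibly the right starting point, but as submitted the proposal proves neither direction.
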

	
	\begin{theorem}
		\label{elasticities equal}
		Let $R$ be an order in a number field $K$ with conductor ideal $I$. If $\Rbar=R\cdot U(\Rbar)$ and $I$ is a radical $\Rbar$-ideal, then $\rho(R)=\rho(\Rbar)$ and $\rho(R[[x]])=\rho(\Rbar[[x]])$.
	\end{theorem}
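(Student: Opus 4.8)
The plan is to reduce both equalities to the class-group formula of Theorem~\ref{davenport elasticity} by proving that, under the two hypotheses, the order $R$ carries exactly the same elasticity-controlling data as the Dedekind domain $\Rbar$. Write $\mathfrak f = I$ for the conductor. The associated condition $\Rbar = R\cdot U(\Rbar)$ says that every element of $\Rbar$ is $\Rbar$-associate to an element of $R$, so $R$ meets every $\Rbar$-associate class. I would first extract local consequences at the singular primes. Because $\mathfrak f = P_1\cdots P_k$ is squarefree, at each singular maximal ideal $Q$ the localized conductor is the whole maximal ideal of the (semilocal) normalization, so $R_Q$ is seminormal. Reading the associated condition inside $R_Q$ forbids $\Rbar$ from splitting at $Q$ into several residue factors that $R_Q$ cannot separate; this forces a \emph{single} prime of $\Rbar$ over each singular $Q$, whence $\Rbar_Q$ is a DVR with maximal ideal $\mathfrak m$ and $R_Q = k' + \mathfrak m$ for some subfield $k'$ of the residue field. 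A direct valuation count then shows every atom of $R_Q$ has value $1$—elements of value $\ge 2$ factor because $\mathfrak m\subseteq R_Q$ supplies value-$1$ divisors—so each such $R_Q$ is half-factorial.

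With the singular primes understood, the class groups match cleanly. The single-prime conclusion makes $Q\mapsto P_i$ a bijection between singular primes of $R$ and the $P_i$, so $\mathfrak f = \bigcap_i Q_i$ and, by the Chinese Remainder Theorem, $R/\mathfrak f\cong\prod_i R/Q_i$ and $\Rbar/\mathfrak f\cong\prod_i\Rbar/P_i$ are products of fields with $R/Q_i\hookrightarrow\Rbar/P_i$. Feeding the associated condition into the Mayer--Vietoris sequence of the conductor square,
\[
U(R)\to U(\Rbar)\times U(R/\mathfrak f)\to U(\Rbar/\mathfrak f)\to \Pic(R)\to \Pic(\Rbar)\to 0,
\]
now makes the connecting map vanish (a unit of $\prod_i\Rbar/P_i$ that already lies in $\prod_i R/Q_i$ is automatically a unit there), giving $\Pic(R)\cong\Pic(\Rbar)=\Cl(\Rbar)$. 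Since every localization of $R$ is half-factorial, the standard transfer homomorphism to the block monoid $\cB(\Cl(\Rbar))$ is length-preserving, so $\rho(R)=D(\Cl(\Rbar))/2=\rho(\Rbar)$ by Theorem~\ref{davenport elasticity}.

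For the power series statement I would check that $\Rbar[[x]]$ is the integral closure of $R[[x]]$ and that its conductor equals $\mathfrak f\Rbar[[x]]=\bigcap_i P_i[[x]]$, again radical since each $(\Rbar/P_i)[[x]]$ is a domain. It then suffices to lift the associated condition, $\Rbar[[x]] = R[[x]]\cdot U(\Rbar[[x]])$, after which the previous two paragraphs apply verbatim to the pair $R[[x]]\subseteq\Rbar[[x]]$ and yield $\rho(R[[x]])=\rho(\Rbar[[x]])$. Given $f\in\Rbar[[x]]$, I would factor out the largest power of $x$ to make the constant term nonzero, choose the constant term of a unit $u\in U(\Rbar[[x]])$ from $U(\Rbar)$ so that $u^{-1}f$ has constant term in $R$, and then determine the higher coefficients of $u$ one at a time so as to drag each successive coefficient of $u^{-1}f$ into $R$.

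The hard part will be precisely this lifting, together with the local single-prime lemma, and the two hypotheses are needed jointly in both. In the inductive construction of $u$, the obstruction at stage $n$ is whether the relevant coefficient of $f$ lies in $R + r_0\Rbar$, where $r_0\in R$ is the constant term already fixed; when $r_0$ is a nonunit this can fail, and it is the radical conductor that must be invoked—via $\mathfrak f\subseteq R$—to absorb the residual discrepancy and close the induction. Dually, the most delicate point of the local analysis is showing that the associated condition genuinely forbids splitting at the singular primes rather than merely shrinking residue fields: it is exactly the split case that would manufacture a local elasticity of $3/2$ (factoring a value-$(3,3)$ element either as three value-$(1,1)$ atoms or as a value-$(2,1)$ atom times a value-$(1,2)$ atom) and thereby break the desired equality.
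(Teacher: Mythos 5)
This theorem is not proved in the paper at all: it is quoted in the introduction as a result of \cite{radicalconductor}, so your proposal can only be judged against the toolkit the paper develops. On that score, your plan for the first equality $\rho(R)=\rho(\Rbar)$ has the right architecture, and both of your intermediate claims are correct and recoverable from the paper's later results: the ``no splitting over singular primes'' claim is Theorem \ref{ass implies ip} (associated implies ideal-preserving) read through Condition (4) of Theorem \ref{ip equiv conditions} when $I$ is radical, and the vanishing of the connecting map in the conductor unit sequence is Theorem \ref{ass implies la} together with Conditions (4)--(6) of Theorem \ref{la equiv conditions}; jointly these two facts are equivalent to the hypothesis by Corollary \ref{ao iff ipo and lao}. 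Two repairs are still needed. Minor: $R_Q$ contains no coefficient field $k'$ (finite fields do not embed in these mixed-characteristic local rings); what is true, and what your valuation count actually uses, is that $R_Q$ is the preimage in the DVR $\Rbar_Q$ of a subfield of its residue field. More substantial: the closing step, ``the standard transfer homomorphism to $\cB(\Cl(\Rbar))$ is length-preserving,'' is not justified by local half-factoriality alone, and this is exactly where the content of the theorem sits. The standard machinery for a non-maximal order transfers to a $T$-block monoid whose $T$-part is built from the singular localizations $(R_Q)_{\textup{red}}$; collapsing that $T$-part requires the unit-coset surjectivity $U(\Rbar)\twoheadrightarrow U(\Rbar/I)/U(R/I)$ that you established via the connecting map, assembled into an explicit argument (e.g., given $x\in R$ and an $\Rbar$-factorization $x=\beta_1\cdots\beta_n$, adjust by units, using that $R/I$ is a product of fields, to get an $R$-factorization of the same length, and conversely). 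You have every ingredient for this but never assemble them. Also, $\rho(\Rbar)=D(G)/2$ fails when $\abs{G}=1$ (then $\rho(\Rbar)=1\neq 1/2$), so Theorem \ref{davenport elasticity} forces that case to be worded separately.

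The genuine gap is the power-series half. You claim that once $\Rbar[[x]]=R[[x]]\cdot U(\Rbar[[x]])$ is lifted, ``the previous two paragraphs apply verbatim'' to the pair $R[[x]]\subseteq\Rbar[[x]]$. They cannot: $\Rbar[[x]]$ is a two-dimensional Krull domain, not a Dedekind domain or ring of integers, so Theorem \ref{davenport elasticity} says nothing about $\rho(\Rbar[[x]])$; the quotient $\Rbar[[x]]/I[[x]]\cong\prod_i(\Rbar/P_i)[[x]]$ is no longer a finite product of finite fields, so the CRT and coset-counting arguments break; and the identity $\rho(\Rbar[[x]])=D(\Cl(\Rbar))/2$ that your route would require is itself a nontrivial assertion, needing both $\Cl(\Rbar[[x]])\cong\Cl(\Rbar)$ and control over which classes contain height-one primes. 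Since the theorem asserts only an equality of elasticities and no formula for either side, the workable route avoids evaluating $\rho(\Rbar[[x]])$ altogether: lift the associated property (the paper records this lifting as a separate theorem of \cite{radicalconductor}, and your coefficientwise induction is a plausible proof strategy for it), note that $\Rbar[[x]]$ is integral over $R[[x]]$ because $\Rbar$ is a finite $R$-module, and then run a direct associate-and-length correspondence between the two power-series rings, parallel to the one needed for $R\subseteq\Rbar$. As written, your plan stalls at the unjustified computation of $\rho(\Rbar[[x]])$.
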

	
	Then when considering multiplicative relationships between a ring and its subring, something similar to $\Rbar=R\cdot U(\Rbar)$ might be a good candidate for a relationship of interest. In fact, these examples (and this relationship in particular) motivated this direction of inquiry.
	
	Throughout the remainder of this paper, we will define particular types of subrings, each of which will have a relationship to the ring in which it is contained similar to the ones explored here. We will then see what properties such subrings have, how these properties to apply to the case of orders in a number field, and how we might find orders with these subring relationships.
	
	\section{Multiplicative Subring Relationships}
	We will now define a few particular types of subrings and the properties they possess. The definitions presented here originally appeared in \cite{dissertation} and \cite{radicalconductor}, though we will explore these properties more in-depth. The first comes from applying the relationship $\Rbar=R\cdot U(\Rbar)$ which we encountered above to a more general ring extension.
	\begin{definition}
		Let $T$ be a commutative ring with identity and $R\subseteq T$ a subring (not necessarily with identity). We say that $R$ is an \term{associated subring} of $T$ if $T=R\cdot U(T)$; that is, if for any $t\in T$, there exist $r\in R$ and $u\in U(T)$ such that $t=ru$.
	\end{definition}
	
	The following examples give an idea of what associated subrings might look like; first, we have a few trivial examples.
	
	\begin{example}
		Any commutative ring with identity $T$ is an associated subring of itself. Let $T$ be any field and $R$ any nonzero subring of $T$. Then $R$ is an associated subring of $T$. Let $R$ be an associated subring of a ring $T$, and let $S$ be any subring of $T$ containing $R$ (i.e. $R\subseteq S\subseteq T$). Then $S$ is an associated subring of $T$.
	\end{example}
	
	\begin{example}
		Let $R=\bZ[\sqrt{5}]$ and $T=\Rbar=\bZ[\frac{1+\sqrt{5}}{2}]$, and let $u=\frac{1+\sqrt{5}}{2}\in U(T)$. Then any element of $T$ is of the form $t=a+bu$. If $b$ is even, then $t\in R$; if $a$ is even, then $t=ru$ for some $r\in R$; if $a$ and $b$ are of the same parity, then $t=ru^2$ for some $r\in R$. Then $R$ is an associated subring of $T$.
	\end{example}
	
	\begin{example}
		Let $m,n\in\bN$ with $m|n^i$ for some $i\in\bN$, and let $T=\bZ[\frac{1}{n}]$ and $R=m\bZ\subseteq T$. Note that any element of $T$ is of the form $t=\frac{a}{n^k}$ for some $a\in\bZ$ and $k\in\bN_0$. Then $t=(n^ia)\cdot \frac{1}{n^{i+k}}$, with $n^ia\in R$ and $\frac{1}{n^{i+k}}\in U(T)$. Then $R$ is an associated subring of $T$. Note that in this case, for $m\neq 1$, $R$ is a subring without identity.
	\end{example}
	
	As one might expect, the term ``associated" here is used to reference the fact that every element of $T$ has an associate which lies in the subring $R$. From this definition, we might also make the observation that there are two trivially equivalent ways of thinking about this relationship. First, as presented here, any $t\in T$ has some $r\in R$ and $u\in U(T)$ such that $t=ru$. On the other hand, for every $t\in T$, there is some $u\in U(T)$ such that $tu\in R$. Though this equivalence is immediately obvious, we note it here because it may often be convenient to think of the relationship in the second form, rather than the first.
	
	We will of course explore this property more in a moment; however, we will first introduce two more types of subrings. Although it may not be immediately obvious that these types of subrings are at all related to one another, we will see later that they are actually very naturally connected.
	
	\begin{definition}
		Let $T$ be a commutative ring and $R$ a subring (neither assumed to have identity). We say that $R$ is an \term{ideal-preserving subring} of $T$ if, for any $T$-ideals $J_1\nsubseteq J_2$, we have $R\cap J_1\nsubseteq J_2$ (equivalently, $R\cap J_1\nsubseteq R\cap J_2$).
	\end{definition}
	
	\begin{example}
		Any commutative ring $T$ is an ideal-preserving subring of itself. Let $T$ be any field and $R$ any nonzero subring of $T$. Then $R$ is an ideal-preserving subring of $T$. Let $R$ be an ideal-preserving subring of a ring $T$, and let $S$ be any subring of $T$ containing $R$. Then $S$ is an ideal-preserving subring of $T$.
	\end{example}
	
	Just as before, the term ``ideal-preserving" is obviously appropriate for this type of subring. What the definition actually says is that containment relationships between ideals of $T$ are preserved when restricting down to ideals in $R$.
	
	\begin{definition}
		Let $T$ be a commutative ring with identity, $R\subseteq T$ a subring (with identity) of $T$, and $I:=(R:T)$, the conductor ideal from $T$ into $R$. We say that $R$ is a \term{locally associated subring} of $T$ if
		$$\quot{U(T)}{U(R)}\cong \quot{U(\sfrac{T}{I})}{U(\sfrac{R}{I})}.$$
	\end{definition}
	
	\begin{example}
		Any commutative ring with identity $T$ is a locally associated subring of itself. Let $T$ be a commutative ring with identity and $R$ a subring with identity such that $(R:T)=\{0\}$. Then $R$ is a locally associated subring of $T$. Let $T$ be any field and $R$ any subring with identity of $T$. Then $R$ is a locally associated subring of $T$ (since the conductor ideal is $I=T$ if $T=R$ and $I=\{0\}$ otherwise). 
	\end{example}
	
	Unlike the previous two definitions, the term ``locally associated" may seem a bit strange. However, as we will see in a moment, this is still an appropriate choice of name for such a subring in most cases, as it will act very much like an associated subring on a particular subset of elements. One may also notice that we did not say, as we did for associated and ideal-preserving subrings, that this relationship is inherited by any intermediate subring $S$ with  $R\subseteq S\subseteq T$. This was quite intentional; we will see after a bit more discussion that this inheritance does not occur in general.
	
	The difficulty with these definitions of associated, ideal-preserving, and locally associated subrings is that they can often be unwieldy to prove or use. After all, we may have infinitely many elements, ideals, or units in $T$ that we need to check, and there may not in general be a way to systematically check the conditions for every element or ideal as there was in the examples above. Thus, it may help in some specific cases to have an equivalent characterization that either gives more information about the subring or is easier to check. We will explore some of these characterizations as well as the relationships among these properties, beginning with the following for locally associated subrings.
	
	\begin{proposition}
		\label{la first conditions}
		Let $T$ be a commutative ring with identity, $R\subseteq T$ a subring (with identity) of $T$, and $I:=(R:T)$. The following are equivalent:
		\begin{enumerate}
			\item $R$ is a locally associated subring of $T$.
			\item Every coset in $\quot{U(\sfrac{T}{I})}{U(\sfrac{R}{I})}$ contains a unit in $T$; that is, for any $t+I\in U(\sfrac{T}{I})$, there exists some $r+I\in U(\sfrac{R}{I})$ such that $tr+\beta\in U(T)$.
			\item If $t\in T$ is relatively prime to $I$, i.e. $tT+I=T$, then there exists $r\in R$ relatively prime to $I$, i.e. $rR+I=R$, and $u\in U(T)$ such that $t=ru$.
		\end{enumerate}
	\end{proposition}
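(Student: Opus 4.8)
The plan is to route everything through the natural map on unit quotients and to convert the abstract isomorphism in (1) into a concrete surjectivity statement. First I would record the structural facts about the conductor $I=(R:T)$ that make the argument run: $I$ is simultaneously an ideal of $R$ and of $T$ with $I\subseteq R$, so the inclusion $R/I\hookrightarrow T/I$ is injective, and consequently $U(R)\subseteq U(T)$ and $U(R/I)\subseteq U(T/I)$. I would also note the elementary dictionary that $t\in T$ satisfies $tT+I=T$ if and only if $t+I\in U(T/I)$ (and likewise $rR+I=R$ iff $r+I\in U(R/I)$), since a relation $ts+c=1$ with $c\in I$ is exactly an inverse of $t+I$ modulo $I$. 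This identifies the elements ``relatively prime to $I$'' appearing in (3) with the units of the quotient rings appearing in (1) and (2).

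The core object is the map $\phi:U(T)/U(R)\to U(T/I)/U(R/I)$ sending $u\,U(R)$ to $(u+I)\,U(R/I)$. I would first check that $\phi$ is a well-defined group homomorphism: if $u_1=u_2v$ with $v\in U(R)$, then $v+I\in U(R/I)$, so the two images agree. The one genuinely nontrivial point is \emph{injectivity}: if $u\in U(T)$ with $u+I\in U(R/I)$, then both $u+I$ and $u^{-1}+I=(u+I)^{-1}$ lie in $R/I$, and since $I\subseteq R$ this forces $u,u^{-1}\in R$, i.e. $u\in U(R)$. Because $\phi$ is always injective, interpreting the isomorphism of (1) as this natural map reduces (1) to the assertion that $\phi$ is surjective. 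Unwinding surjectivity, that every coset $(t+I)\,U(R/I)$ equals some $(u+I)\,U(R/I)$ with $u\in U(T)$, is precisely the statement of (2) that each coset contains the image of a unit of $T$, once the discrepancy is absorbed into $\beta\in I$. This yields (1) $\iff$ (2) with essentially no computation beyond the injectivity lemma.

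It remains to link this with (3). For (2) $\Rightarrow$ (3), given $t$ with $tT+I=T$ the dictionary gives $t+I\in U(T/I)$, and (2) produces $r'\in R$ with $r'+I\in U(R/I)$ and a unit $u\in U(T)$ with $tr'\equiv u\pmod I$; solving in $T/I$ yields $t\equiv us\pmod I$ for $s+I=(r'+I)^{-1}\in U(R/I)$, so $t=us+c$ with $c\in I$. The crucial move, and the step I expect to be the main obstacle, is upgrading this congruence to an honest factorization $t=ru$ with $r\in R$ relatively prime to $I$. Here I would exploit that $I$ is a $T$-ideal contained in $R$: setting $r:=s+u^{-1}c$, the element $u^{-1}c$ lies in $I$ (hence in $R$) because $c\in I$ and $I$ absorbs multiplication by $T$, so $r\in R$, $r+I=s+I\in U(R/I)$, and $ru=us+c=t$. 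The reverse implication (3) $\Rightarrow$ (2) is then routine: from $t=ru$ one reads off $(t+I)(r+I)^{-1}=u+I$, exhibiting a unit of $T$ in the coset of $t+I$. Combining (1) $\iff$ (2) with (2) $\iff$ (3) completes the equivalence.
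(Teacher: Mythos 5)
Your proof is correct and follows essentially the same route as the paper: both arguments hinge on the natural map $\phi$ from $\quot{U(T)}{U(R)}$ to $\quot{U(\sfrac{T}{I})}{U(\sfrac{R}{I})}$, prove its injectivity via the observation that a unit of $T$ lying in $R$ modulo $I$ has both itself and its inverse in $R$ (the paper phrases this as computing $\ker(\phi)=U(R)$ and invoking the first isomorphism theorem), identify (1) and (2) with surjectivity of $\phi$, and pass between (2) and (3) using the dictionary $tT+I=T \iff t+I\in U(\sfrac{T}{I})$ together with the absorption trick $t=(s+\beta u^{-1})u$ with $\beta u^{-1}\in I\subseteq R$. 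The only differences are cosmetic (you define $\phi$ on the quotient directly rather than on $U(T)$), and you are in fact slightly more explicit than the paper in verifying that the element $r=s+u^{-1}c$ remains relatively prime to $I$.
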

	
	\begin{proof}
		First, we will consider in general the map $\phi:U(T)\to \quot{U(\sfrac{T}{I})}{U(\sfrac{R}{I})}$ defined by $\phi(u)=(u+I)U(\quot{R}{I})$. It is trivial to see that $\phi$ is a well-defined multiplicative group homomorphism. Suppose that $u\in U(R)$; then note that $u+I\in U(\sfrac{R}{I})$, so $\phi(u)=(1+I)U(\sfrac{R}{I})$. Then $U(R)\subseteq \ker(\phi)$. On the other hand, suppose that $u\in \ker(\phi)$, i.e. $\phi(u)=(1+I)U(\sfrac{R}{I})$. Then there is some $r+I\in U(\sfrac{R}{I})$ such that $u+I=r+I$, so $u\in R$. Furthermore, note that since $u\in U(T)$, then $u^{-1}$ exists in $T$, and thus $\phi(u^{-1})=\phi(u^{-1})\phi(u)=\phi(uu^{-1})=\phi(1)=(1+I)U(\sfrac{R}{I})$. Then as before, $u^{-1}\in R$, so $u\in U(R)$. Then $\ker(\phi)=U(R)$, so by the first isomorphism theorem, $\quot{U(T)}{U(R)}\cong \phi(U(T))$.
		
		Since $\phi(U(T))$ is a subgroup of $\quot{U(\sfrac{T}{I})}{U(\sfrac{R}{I})}$, the above discussion tells us that $R$ is a locally associated subring of $T$ if and only if $\phi$ is onto. Furthermore, Condition 2 above is actually equivalent to $\phi$ being onto; to illustrate this, suppose that $\phi$ is onto. Then for every $t+I\in U(\sfrac{R}{I})$, there is some $u\in U(T)$ such that $\phi(u)=(u+I)U(\sfrac{R}{I})=(t+I)U(\sfrac{R}{I})$. then there exists $r+I\in U(\sfrac{R}{I})$ such that $u+I=(t+I)(r+I)=tr+I$, and thus there is some $\beta\in I$ such that $tr+\beta=u\in U(T)$. The converse follows by reversing the same argument. Then $1\iff 2$.
		
		Now to show that $2\iff 3$, note that $t\in T$ is relatively prime to $I$, i.e. $tT+I=T$, if and only if $t+I\in U(\sfrac{T}{I})$ (similarly, $rR+I=R$ if and only if $r+I\in U(\sfrac{R}{I})$). Then assuming Condition 2, let $t\in T$ be relatively prime to $I$. Condition 2 tells us that there is some $r+I\in U(\sfrac{R}{I})$ and $\beta\in I$ such that $tr+\beta=u\in U(T)$. Letting $s\in R$ such that $s+I=(r+I)^{-1}\in U(\sfrac{R}{I})$, this gives us that $t=us+\beta'$ for some $\beta'\in I$, and thus $t=(s+\beta'u^{-1})u$. Then $2\implies 3$.
			
		Finally, assume that Condition 3 holds and let $t+I\in U(\sfrac{T}{I})$. Since $t\in T$ is relatively prime to $I$, Condition 3 tells us that there exist $s\in R$ relatively prime to $I$ and $u\in U(T)$ such that $t=su$. Then letting $r+I=(s+I)^{-1}\in U(\sfrac{R}{I})$, we have $tr+I=u+I$, i.e. there is some $\beta\in I$ such that $tr+\beta=u\in U(T)$. Then $3\implies 2$.
	\end{proof}
	
	This characterization may help us see the relationship between associated and locally associated subrings. From Condition 3 above, we can see that to show $R$ is a locally associated subring of $T$, we simply need to show the condition for associated subrings restricted to the elements $t$ relatively prime to the conductor ideal $I$, with the added condition that the associate of $t$ which lies in $R$ must remain relatively prime to $I$. Interestingly enough, this leaves just enough wiggle room for a subring to be associated but not locally associated. Before looking at more equivalent characterizations of these subrings, we will now take a closer look at the relationships among them.
	
	\begin{theorem}
		\label{ass implies ip}
		Let $T$ be a commutative ring with identity and $R$ a subring (not necessarily with identity). If $R$ is an associated subring of $T$, then $R$ is an ideal-preserving subring of $T$. The converse does not hold in general.
	\end{theorem}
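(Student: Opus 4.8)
For the forward implication I would work from the ``dual'' description of associated subrings noted just after the definition: $R$ is associated in $T$ if and only if for every $t\in T$ there is a unit $u\in U(T)$ with $tu\in R$. The plan is then a direct (two-line) argument. Suppose $R$ is associated and let $J_1\nsubseteq J_2$ be $T$-ideals; choose $x\in J_1\setminus J_2$. By the associated property there is $u\in U(T)$ with $xu\in R$. Since $J_1$ is an ideal, $xu\in J_1$, so $xu\in R\cap J_1$; and $xu\notin J_2$, for otherwise $x=(xu)u^{-1}\in J_2$ --- this is the only place the \emph{invertibility} of $u$ enters. Hence $R\cap J_1\nsubseteq J_2$, which is exactly ideal-preservation. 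I expect no obstacle here.

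The substance of the theorem is the counterexample, and the guiding idea is that ``associated'' is a global condition on $U(T)$ while ideal-preservation can be verified one ideal at a time, so a ring with too few units may still preserve ideals. The cleanest source is an order in an imaginary quadratic field with unit group $\{\pm1\}$. I would take $\Rbar=\cO_K$ with $K=\bQ(\sqrt{-2})$, let $p=5$ (which is inert in $K$), and set $R=\bZ+p\Rbar=\bZ[p\sqrt{-2}]$, an order with conductor $I=(R:\Rbar)=p\Rbar=:P$, a prime of $\Rbar$ whose residue field $\Rbar/P\cong\bF_{p^2}$ is a field. Because $U(\Rbar)=\{\pm1\}$ and $R=-R$, we have $R\cdot U(\Rbar)=R\subsetneq\Rbar$, so $R$ is not associated.

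It remains to show that $R$ is ideal-preserving, and this is the main obstacle. First I would reduce to principal ideals: applying the definition to $J_1=(t)$ (and noting $(t)\subseteq J_1$ for any ideal $J_1\ni t$), it suffices to find, for each $t\in\Rbar$ and each ideal $J_2\not\ni t$, an element of $R\cap(t)$ lying outside $J_2$. If $t\in R$, take that element to be $t$ itself. If $t\notin R$, then since $R/I\cong\bF_p$ sits inside $\Rbar/I\cong\bF_{p^2}$ as the prime field, the residue of $t$ mod $P$ is nonzero, so $v_P(t)=0$. Now I split on $J_2$: if $P\nmid J_2$, a valuation count shows $P\cdot(t)\nsubseteq J_2$, and since $P=I\subseteq R$ we have $P\cdot(t)\subseteq R\cap(t)$, producing the required element; if $P\mid J_2$, I would use that $\Rbar/P$ is a field to pick $s$ with $st\equiv1\pmod P$, so that $st\in 1+P\subseteq R$ while $v_P(st)=0<v_P(J_2)$ forces $st\notin J_2$.

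The crux is the case $P\mid J_2$: it succeeds precisely because inertness makes $\Rbar/P$ a field, so every nonzero residue is invertible and the correction $st\equiv1$ already lands in $R/P=\bF_p$. This is also exactly where I would expect the argument to fail for a \emph{split} conductor, and checking that it genuinely fails there confirms that the inert hypothesis is doing real work: if $p$ split, then $\Rbar/I\cong\bF_p\times\bF_p$ with $R/I$ only the diagonal, an element with residue $(1,0)$ has no $R$-multiple outside the corresponding prime, and ideal-preservation breaks. Keeping this field-versus-product distinction in view is what makes the counterexample go through.
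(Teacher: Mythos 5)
Your forward implication is exactly the paper's argument: pick $t\in J_1\setminus J_2$, move it into $R$ by a unit $u$, note $tu\in R\cap J_1$, and use invertibility of $u$ to see $tu\notin J_2$; nothing to add there. Where you genuinely diverge is the counterexample, and your route is correct but different. The paper takes $R=\bZ[5\sqrt{2}]\subseteq T=\bZ[\sqrt{2}]$ (a \emph{real} quadratic order) and defers both verifications to machinery developed later: ideal-preservation via Theorem \ref{ip iff inert primes} (5 is inert in $\bQ[\sqrt{2}]$), and non-associatedness via the fundamental-unit computation in Example \ref{ip not la}, which shows $R$ is not locally associated and hence not associated by Theorem \ref{ass implies la}. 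You instead work in an imaginary quadratic field, $R=\bZ[5\sqrt{-2}]\subseteq T=\bZ[\sqrt{-2}]$, which buys two things. First, non-associatedness is immediate and elementary: $U(T)=\{\pm 1\}\subseteq R$, so $R\cdot U(T)=R\subsetneq T$, with no unit-group or class-group input at all. Second, your proof of ideal-preservation is self-contained: you rederive the reduction to principal ideals (this is Condition 2 of the paper's Theorem \ref{ip equiv conditions}), then for $t\notin R$ (so $v_P(t)=0$, since $P=I\subseteq R$) you split on whether $P=5T$ divides $J_2$, using $P\cdot(t)\subseteq R\cap(t)$ in one case and invertibility of $t$ modulo the inert prime $P$ in the other. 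Both valuation checks are sound; in the first case the witnessing prime $Q$ with $v_Q(t)<v_Q(J_2)$ cannot equal $P$ because $v_P(J_2)=0$, so multiplying by $P$ does not destroy the witness. In effect you give a direct, hands-on proof of exactly the special case of Theorem \ref{ip iff inert primes} you need. The trade-off: your example is logically complete at the point the theorem is stated, which is cleaner for the theorem in isolation, while the paper's choice costs forward references but lets one example do double duty --- Example \ref{ip not la} simultaneously witnesses that ideal-preserving does not imply locally associated, and does so in the harder setting of an infinite unit group.
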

	
	\begin{proof}
		Assume that $R$ is an associated subring of $T$, and let $J_1\nsubseteq J_2$ be ideals of $T$. This means that there is some $t\in J_1\backslash J_2$. Since $R$ is an associated subring of $T$, there exists $u\in U(T)$ such that $tu=r\in R$. Now since $J_1$ is an ideal and $t\in J_1$, $r=tu\in R\cap J_1$. However, if $r=tu\in J_2$, then $ru^{-1}=tuu^{-1}=t\in J_2$, a contradiction. Then $r\in R\cap J_1\backslash J_2$, meaning that $R\cap J_1\nsubseteq J_2$. Thus, $R$ is an ideal-preserving subring of $T$.
		
		To see that the converse does not necessarily hold, let $T=\bZ[\sqrt{2}]$ and $R=\bZ[5\sqrt{2}]\subseteq T$. Showing that $R$ is an ideal-preserving subring of $T$ which is not an associated subring of $T$ will be easier using results we will develop later. To see this part of the proof elaborated upon, see Example \ref{ip not la}.
	\end{proof}
	
	\begin{theorem}
		\label{ass implies la}
		Let $T$ be a commutative ring with identity, $R\subseteq T$ a subring (with identity) of $T$, and $I:=(R:T)$. If $R$ is an associated subring of $T$ and $U(\sfrac{R}{I})=\quot{R}{I}\cap U(\sfrac{T}{I})$ (i.e. any element of $\quot{R}{I}$ which is invertible in $\quot{T}{I}$ has its inverse lying in $\quot{R}{I}$), then $R$ is a locally associated subring of $T$. In particular, this will hold when $T$ is an integral extension of $R$.
		
		In general, if $R$ is an associated subring of $T$, $R$ may fail to be a locally associated subring of $T$. Moreover, if $R$ is a locally associated subring of $T$, $R$ may fail to be an associated subring of $T$. Furthermore, it is possible for $T$ to have a locally associated subring $R$ and a non-locally associated subring $S$ such that $R\subseteq S\subseteq T$.
	\end{theorem}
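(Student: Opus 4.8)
The plan is to prove the positive implication directly from Proposition~\ref{la first conditions} and then handle the three failure assertions by explicit examples.

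For the positive direction I would verify Condition~3 of Proposition~\ref{la first conditions}. Note first that one inclusion of the hypothesis, $U(R/I)\subseteq (R/I)\cap U(T/I)$, is automatic, so the assumption really supplies $(R/I)\cap U(T/I)\subseteq U(R/I)$. Now let $t\in T$ be relatively prime to $I$. Since $R$ is associated, there are $r\in R$ and $u\in U(T)$ with $t=ru$. Because $I=(R:T)$ is an ideal of $T$ contained in $R$, reduction modulo $I$ is a ring homomorphism under which $R/I$ embeds in $T/I$; applying it gives $t+I=(r+I)(u+I)$ in $T/I$. Here $t+I\in U(T/I)$ (as $t$ is coprime to $I$) and $u+I\in U(T/I)$ (as $u\in U(T)$), so $r+I=(t+I)(u+I)^{-1}\in (R/I)\cap U(T/I)=U(R/I)$. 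Thus $r$ is relatively prime to $I$, which is exactly what Condition~3 requires, and $R$ is locally associated.

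To obtain the ``in particular'' clause I would show that integrality forces the unit hypothesis. If $T$ is integral over $R$ then $T/I$ is integral over $R/I$ (reduce any monic relation modulo $I$), and $R/I\hookrightarrow T/I$ since $R\cap I=I$. The required equality $(R/I)\cap U(T/I)=U(R/I)$ is then the standard fact that in an integral extension $A\subseteq B$ any $a\in A$ with $a^{-1}\in B$ already has $a^{-1}\in A$: writing a monic relation for $a^{-1}$ over $A$ and multiplying through by a suitable power of $a$ expresses $a^{-1}$ as a polynomial in $a$ with coefficients in $A$. Combined with the previous paragraph, associated plus integral yields locally associated.

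For the three negative assertions I would construct explicit counterexamples. The positive result just proved shows that any ``associated but not locally associated'' example must live in a non-integral extension; I would engineer one in which $(R/I)\cap U(T/I)$ strictly contains $U(R/I)$ while $U(T)$ is too small to re-associate a given coprime $t$ to a representative of $R$ that is still coprime to $I$, so that Condition~3 fails even though $T=R\cdot U(T)$. For ``locally associated but not associated'' I would use that local association only constrains elements coprime to $I$: an extension (for instance a suitable order in a number field, where integrality makes the local condition checkable) in which some element inside the conductor cannot be made an associate of an element of $R$ breaks associatedness while the coprime part still matches. For the non-inherited triple $R\subseteq S\subseteq T$, the crucial phenomenon is that the conductor $(S:T)$ can properly contain $(R:T)$, changing the quotient that governs local association; I would choose $R,S,T$ so that enlarging $R$ to $S$ enlarges the conductor and destroys the unit-quotient isomorphism. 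The routine part is the positive direction; the real work, and the main obstacle, is the design of these examples---especially the non-integral ``associated but not locally associated'' ring and the non-inherited triple---whose verification I expect to rely on the sharper characterizations of these subring properties developed later in the paper.
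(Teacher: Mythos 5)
Your positive direction is correct and is essentially the paper's own argument: you verify Condition 3 of Proposition \ref{la first conditions} by writing $t=ru$, noting that $r$ is coprime to $I$ as an element of $T$, and then invoking the hypothesis $U(\sfrac{R}{I})=\quot{R}{I}\cap U(\sfrac{T}{I})$ to conclude that $r$ is coprime to $I$ as an element of $R$. (The paper phrases the first step as $rT+I=t(u^{-1}T)+I=T$ rather than reducing the equation $t=ru$ modulo $I$, but the content is identical.) Your treatment of the ``in particular'' clause --- inverting a monic relation for $a^{-1}$ and multiplying through by a power of $a$ --- actually fills in a step the paper merely asserts, and it is correct.

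However, the theorem is not only that implication: it also makes three existence claims --- an associated subring that is not locally associated, a locally associated subring that is not associated, and a tower $R\subseteq S\subseteq T$ with $R$ locally associated but $S$ not. For these you give only design specifications (``I would engineer\dots'', ``I would choose\dots'') with no actual rings and no verification, and you acknowledge that this is the main obstacle. That is a genuine gap, since the counterexamples are the substantive content of the second half of the statement. For comparison, the paper exhibits them explicitly: (i) $T=\bQ[x]$, $R=\bZ+x\bZ+x^2\bQ[x]$ with conductor $I=x^2T$ is associated (clear the denominators of the constant and linear coefficients using a unit of $\bQ[x]$), but $t=2+3x$ is coprime to $I$ and cannot be re-associated into $R$ coprimely to $I$, since forcing the constant coefficient to be $\pm1$ requires $u=\pm\frac{1}{2}$, making the linear coefficient $\pm\frac{3}{2}\notin\bZ$; (ii) $T=\quot{\bZ[\sqrt{2}]}{(2)}$, $R=\bZ_2$, where $(R:T)=\{0\}$ makes $R$ trivially locally associated, yet $R\cdot U(T)$ has only three of the four elements of $T$, so $R$ is not associated; (iii) $\bZ[x]\subseteq \bZ+x\bZ+x^2\bQ[x]\subseteq\bQ[x]$, where $(\bZ[x]:\bQ[x])=\{0\}$ makes $\bZ[x]$ trivially locally associated while the middle ring fails by (i). Your heuristics do point at the right phenomena --- non-integrality is indeed necessary for (i), and a conductor shift drives (iii) --- and your number-field suggestion for (ii) could be realized by $\bZ[2\sqrt{2}]\subseteq\bZ[\sqrt{2}]$; but note that the paper deliberately uses zero-conductor examples for (ii) and (iii) precisely so that local associatedness is immediate, whereas certifying a locally associated order in a number field requires Theorem \ref{la equiv conditions} and Lemma \ref{exact sequence}, which are not yet available at this point in the paper.
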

	
	\begin{proof}
		Assume that $R$ is an associated subring of $T$ and $U(\sfrac{R}{I})=\quot{R}{I}\cap U(\sfrac{T}{I})$. Then for any $t\in T$ relatively prime to $I$, there exist $r\in R$ and $u\in U(T)$ such that $t=ru$ (in fact, this holds for any $t\in T$). By the third characterization of locally associated subrings presented above, we now need only show that $r$ is relatively prime to $I$ as an element of $R$, i.e. $rR+I=R$. Note that $rT+I=t(u^{-1}T)+I=tT+I=T$, so $r$ is relatively prime to $I$ as an element of $T$. In other words, $r+I\in U(\sfrac{T}{I})$. Then since $r+I\in \quot{R}{I}\cap U(\sfrac{T}{I})=U(\sfrac{R}{I})$, $r$ is also relatively prime to $I$ as an element of $R$. Then $R$ is a locally associated subring of $T$.
		
		Note that when $T$ is an integral extension of $R$, i.e. every element of $T$ is a root of a monic polynomial in $R[x]$, then $\quot{T}{I}$ is an integral extension of $\quot{R}{I}$ as an immediate consequence. Then $U(\sfrac{R}{I})=\quot{R}{I}\cap U(\sfrac{T}{I})$.
		
		To see that the condition $U(\sfrac{R}{I})=\quot{R}{I}\cap U(\sfrac{T}{I})$ is actually important to this conclusion, consider the rings $T=\bQ[x]$ and $R=\bZ+x\bZ+x^2\bQ[x]\subseteq T$. In other words, $R$ is the subring of $T$ consisting of polynomials whose constant and linear coefficients are restricted to the integers. In this case, the conductor ideal $I=(R:T)$ is $I=x^2T$ and $U(T)=\bQ$. The first thing we might note is that for any $f(x)=\frac{r_0}{s_0}+\frac{r_1}{s_1}x+\dots+\frac{r_n}{s_n}x^n\in T$, we have that $f(x)=\left(r_0s_1+r_1s_0x+\dots+\frac{r_ns_0s_1}{s_n}\right)\cdot \frac{1}{s_0s_1}\in R\cdot U(T)$. Then $R$ is an associated subring of $T$. We will now show that $R$ is not a locally associated subring of $T$.
		
		To show that $R$ is not a locally associated subring of $T$, we need to show that there exists an element of $T$ which is relatively prime to $I$ which cannot be written as a product of an element of $R$ relatively prime to $I$ times a unit in $T$. To do so, let $t=2+3x\in T$. Note that $(2+3x)(\frac{1}{2}-\frac{3}{4}x)\equiv 1\modulo{x^2}$, so $2+3x$ is relatively prime to $I=x^2 T$. Furthermore, we might note that any element of $R$ which is relatively prime to $I$ (i.e. for which $r+I\in U(\sfrac{R}{I})$) must have its constant coefficient be $\pm 1$. Then if there is some $u\in U(T)=\bQ$ such that $ut+I=2u+3ux+I\in U(\sfrac{R}{I})$, it must be the case that $u=\pm \frac{1}{2}$. However, this results in a linear coefficient of $3u=\pm\frac{3}{2}\notin R$, so in fact there is no such unit in $T$ which can be multiplied by $t$ to produce an element of $R$ which is still relatively prime to $I$. Then $R$ is an associated subring of $T$ which is not a locally associated subring of $T$.
		
		We now want to show that a locally associated subring $R$ of $T$ need not be an associated subring of $T$. To see this, let $T=\quot{\bZ[\sqrt{2}]}{(2)}$ and $R=\bZ_2\subseteq T$. In this case, note that $I=(R:T)=\{0\}$. Then trivially, $R$ is a locally associated subring of $T$. However, $U(T)=\{1+(2),1+\sqrt{2}+(2)\}$ and thus $R\cdot U(T)=\{0+(2),1+(2),1+\sqrt{2}+(2)\}\subsetneq T$. Then $R$ is not an associated subring of $T$.
		
		Finally, we want to see that there exists some tower of rings $R\subseteq S\subseteq T$ such that $R$ is a locally associated subring of $T$ while $S$ is not a locally associated subring of $T$. 
		To this end, let $T=\bQ[x]$ and $R=\bZ[x]$. Then for any nonzero $f(x)=a_0+a_1x+\dots+a_nx^n\in R$ (here, we may assume without loss of generality that $a_n\neq 0$) and $b\in\bZ$ such that $b\nmid a_n$, we have $\frac{1}{b}\cdot f\notin R$. Then $I=(R:T)=\{0\}$. Then as observed previously, $R$ is trivially a locally associated subring of $T$. However, letting $S=\bZ+x\bZ+x^2\bQ[x]$, we have that $R\subseteq S\subseteq T$ but, as shown above, $S$ is not a locally associated subring of $T$.
	\end{proof}
	
	Then in many cases that we are going to be concerned with, specifically integral ring extensions, any associated subring will also be ideal-preserving and locally associated. The natural question one might ask is whether the converse holds; that is, to show that a subring is associated, does it suffice to show that it is ideal-preserving and locally associated? This question is not yet answered in general, though we will show later that it holds in a particularly interesting and useful case.
	
	As we have alluded to, these subring relationships are of particular interest and utility when $R$ is an an order in a number field and $T=\Rbar$, the ring of algebraic integers in that number field. Thus, it will help to have more succinct terminology in this case.
	
	\begin{definition}
		Let $R$ be an order in a number field $K$.
		\begin{enumerate}
			\item If $R$ is an associated subring of $\Rbar$, we will say that $R$ is an \term{associated order}.
			\item If $R$ is an ideal-preserving subring of $\Rbar$, we will say that $R$ is an \term{ideal-preserving order}.
			\item If $R$ is a locally associated subring of $\Rbar$, we will say that $R$ is a \term{locally associated order}.
		\end{enumerate}
	\end{definition}
	
	\section{Equivalent Characterizations}
	To more fully understand these subrings and more easily check their conditions, we will explore additional alternate characterizations. As we will see, the more assumptions we make about the structures of $T$ and $R$, the more alternate characterizations we can produce.
	
	We will begin with characterizations of associated subrings.
	
	\begin{theorem}
		\label{as equiv conditions}
		Let $T$ be a commutative ring with identity, $R\subseteq T$ a subring (possibly without identity), and $I=(R:T)$. Consider the following condition:
		\begin{enumerate}
			\item $R$ is an associated subring of $T$.
		\end{enumerate}
		If $T$ is an integral domain, the following is equivalent to (1):
		\begin{enumerate}
			\item[(2)] For any $t\in T$ and collection of $T$-ideals $\{I_\alpha\}_{\alpha\in \Gamma}$,
			$$R\cap (t)\subseteq \bigcup_{\alpha\in \Gamma}I_\alpha \implies \:\exists\:\alpha\in \Gamma\st t\in I_\alpha.$$
		\end{enumerate}
		If $T$ is a Dedekind domain, the following is equivalent to (1):
		\begin{enumerate}
			\item[(3)] For any $t\in T$, if $(t)=P_1^{a_1}\dots P_k^{a_k}$ is the prime factorization of the principal ideal $(t)$ in $T$, then $$R\cap (t)\nsubseteq \left(\bigcup_{i=1}^kP_i^{a_i+1}\right)\cup\left(\bigcup_{Q\in\spec(T),Q\nmid (t)}Q\right).$$
		\end{enumerate}
		If $T$ is any commutative ring with identity and $1\in R$, the following is equivalent to (1):
		\begin{enumerate}
			\item[(4)] For any subset $\{u_\alpha\}_{\alpha\in \Gamma}\subseteq U(T)$ containing a representative from each coset of $\quot{U(T)}{U(R)}$ and $t\in T$, there exist $\alpha\in \Gamma$ and $\beta\in I$ such that $u_\alpha(t+\beta)\in R$. That is, in a slight abuse of notation, $\quot{T}{I}=\quot{R}{I}\cdot \quot{U(T)}{U(R)}$.
		\end{enumerate}
	\end{theorem}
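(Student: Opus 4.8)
The plan is to establish each labeled equivalence separately, and in every case the implication \emph{from} (1) is the routine one while the reverse carries the content. I would first dispatch $(1)\Rightarrow(2)$, which in fact needs no hypothesis on $T$ beyond having an identity: given $t\in T$, associatedness produces $u\in U(T)$ with $tu=r\in R$, and since $tu\in(t)$ we get $r\in R\cap(t)$; if $R\cap(t)\subseteq\bigcup_{\alpha}I_\alpha$ then $r\in I_{\alpha}$ for some $\alpha$, whence $t=ru^{-1}\in I_{\alpha}$ because $I_\alpha$ is an ideal and $u^{-1}\in T$.

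The crux is $(2)\Rightarrow(1)$, and this is where I expect the real work. I would argue by contradiction: fix $t\in T$ (we may assume $t\neq0$, as $0\in R$ always has the associate $0$) and suppose no associate of $t$ lies in $R$. Writing a generic element of $R\cap(t)$ as $ts$ with $s\in T$, unique since $T$ is a domain, the assumption says every such $s$ is a non-unit. Using that in a commutative ring with identity the non-units are exactly $\bigcup_{M}M$ over the maximal ideals $M$, this gives $R\cap(t)\subseteq\bigcup_{M}tM$, where each $tM$ is a genuine $T$-ideal. Applying (2) to the collection $\{tM\}_{M}$ then forces $t\in tM$ for some maximal $M$, i.e. $t(1-m)=0$ with $m\in M$; since $T$ is a domain and $t\neq0$ this yields $m=1\in M$, contradicting properness. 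Hence some element of $R\cap(t)$ equals $tu$ with $u$ a unit, proving (1). The delicate point is the choice of the covering family $\{tM\}$ and the check that it is a legitimate family of $T$-ideals to feed into (2); the domain hypothesis is used both to write $ts$ uniquely and to cancel in $t(1-m)=0$.

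For $(1)\Leftrightarrow(3)$ I would work valuation-theoretically, since $T$ is Dedekind. For nonzero $x$, membership in the right-hand union means either $v_{P_i}(x)\geq a_i+1$ for some $i$ or $v_Q(x)\geq1$ for some prime $Q\nmid(t)$; so an $x\in R\cap(t)$ lying outside the union satisfies $v_{P_i}(x)=a_i$ for all $i$ and $v_Q(x)=0$ otherwise, which is exactly $(x)=(t)$, i.e. $x$ is an associate of $t$ contained in $R$. Thus (3) asserts precisely the existence of an associate of $t$ in $R$, which is (1), and the converse merely repackages the associate produced by (1) as such a witness $x$. One can equally read $(2)\Rightarrow(3)$ directly, since the union in (3) is a special covering family none of whose members contains $t$. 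The unit and zero cases for $t$ are degenerate and I would treat them separately.

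Finally, for $(1)\Leftrightarrow(4)$ the key simplification is that $1\in R$ forces $I=(R:T)\subseteq R$, so $u_\alpha\beta\in I\subseteq R$ and the stated condition $u_\alpha(t+\beta)\in R$ is equivalent to $u_\alpha t\in R$; the element $\beta$ is thus absorbed, and (4) reduces to the assertion that for every $t$ there is a coset representative $u_\alpha$ of $U(T)/U(R)$ with $u_\alpha t\in R$. Then $(4)\Rightarrow(1)$ is immediate since each $u_\alpha\in U(T)$, and for $(1)\Rightarrow(4)$ I would take the unit $u$ with $ut\in R$, write $u=u_\alpha w$ with $w\in U(R)$ according to the chosen transversal, and observe $u_\alpha t=w^{-1}(ut)\in R$ because $w^{-1}\in U(R)\subseteq R$ and $R$ is closed under multiplication. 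This part is entirely routine, so the single genuine obstacle in the theorem is the covering argument in $(2)\Rightarrow(1)$.
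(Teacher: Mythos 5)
Your proposal is correct, but two of the three equivalences are argued along genuinely different lines from the paper. For $(2)\Rightarrow(1)$, the paper covers $R\cap(t)$ by the principal ideals $\{(\alpha)\}_{\alpha\in R\cap(t)}$ generated by its own elements; condition (2) then returns some $\alpha\in R\cap(t)$ with $t\in(\alpha)$, so $(t)=(\alpha)$ and $\alpha$ is the desired associate --- a direct argument that produces the witness. You instead assume no associate of $t$ lies in $R$, cover $R\cap(t)$ by the family $\{tM\}$ over maximal ideals $M$ (via the fact that the non-units of $T$ are exactly the union of its maximal ideals), and derive the contradiction $1\in M$ from $t\in tM$. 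Both are valid; the paper's version is constructive in spirit and needs no appeal to Krull's theorem on the existence of maximal ideals, while yours isolates cleanly where the domain hypothesis enters (uniqueness of $s$ in $ts$, and cancellation in $t(1-m)=0$), and you correctly note that $(1)\Rightarrow(2)$ needs no domain hypothesis at all. For condition (3), the paper proves $(2)\Leftrightarrow(3)$, whereas you prove $(1)\Leftrightarrow(3)$ directly by valuations, observing that an element $x\in R\cap(t)$ outside the union has $v_{P_i}(x)=a_i$ for all $i$ and $v_Q(x)=0$ for all other primes, hence $(x)=(t)$, so condition (3) is a literal ideal-theoretic restatement of ``$t$ has an associate in $R$''; this is arguably more transparent than the paper's route through (2), though the paper's version yields the chain $2\Leftrightarrow 3$ as a byproduct. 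Your treatment of $(1)\Leftrightarrow(4)$ --- absorbing $\beta$ using $I\subseteq R$ and then passing between an arbitrary unit and its transversal representative via $U(R)$ --- is essentially the paper's argument (the paper takes $\beta=0$ in one direction and uses $U(T)$ itself as the transversal in the other). One pedantic point: $I\subseteq R$ follows from $1\in T$ (each $\beta\in I$ satisfies $\beta=\beta\cdot 1\in R$), not from $1\in R$; the hypothesis $1\in R$ is what makes $U(R)$, and hence the coset space $U(T)/U(R)$, meaningful in the statement of (4).
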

	
	\begin{proof}
		Suppose that $T$ is an integral domain. In this case, we want to show $1\iff 2$. To that end, suppose first that $R$ is an associated subring of $T$. Let $t\in T$ and $\{I_\alpha\}_{\alpha\in\Gamma}$ a collection of $T$-ideals, and suppose that $R\cap (t)\subseteq \bigcup_{\alpha\in\Gamma}I_\alpha$. Since there is some $u\in U(T)$ such that $ut\in R\cap (t)$, this element is contained in the union of the $I_\alpha$'s. Then $ut\in I_\alpha$ for some $\alpha\in \Gamma$, so $t\in u^{-1}I_\alpha=I_\alpha$. Then $1\implies 2$.
		
		Now assume that Condition 2 holds. Then for some $t\in T$, let $\{(\alpha)\}_{\alpha\in R\cap (t)}$ be the set of all the principal $T$-ideals generated by an element of $R\cap (t)$. Since each element of $R\cap (t)$ is contained in the principal ideal it generates, $R\cap (t)\subseteq \bigcup_{\alpha\in R\cap (t)}(\alpha)$. Then by Condition 2, there is some $\alpha\in R\cap (t)$ such that $t\in (\alpha)$. Since $\alpha\in (t)$ by its definition, this tells us that $(t)=(\alpha)$. Since $T$ is an integral domain, this tells us that $\alpha$ is an associate of $t$, i.e. $t=u\alpha$ for some $u\in U(T)$. Since $\alpha\in R$, we can conclude that $R$ is an associated subring of $T$. Then $2\implies 1$ when $T$ is an integral domain.
		
		Now assume that $T$ is a Dedekind domain. We want to show $1\iff 3$. Since $T$ is an integral domain, we can do so by showing $2\iff 3$. First, suppose that Condition 2 holds. For any $t\in T$, we can factor the principal ideal $(t)$ into a product of primes $(t)=P_1^{a_1}\dots P_k^{a_k}$. Then note that for any $1\leq i\leq k$, $t\notin P_k^{a_k+1}$. Moreover, for any prime ideal $Q$ not in this factorization, $t\notin Q$. Then since $t$ is not in any of these ideals, it follows from Condition 2 that $R\cap (t)\nsubseteq \left(\bigcup_{i=1}^kP_i^{a_i+1}\right)\cup\left(\bigcup_{Q\in\spec(T),Q\nmid (x)}Q\right).$ Then $2\implies 3$.
		
		For the converse, assume that Condition 3 holds. Let $t\in T$ and $\{I_\alpha\}_{\alpha\in \Gamma}$ be a collection of $T$-ideals, none of which contains $t$. Since $t\notin I_\alpha$ for each $\alpha\in \Gamma$, then $I_\alpha\nmid (t)$; that is, each $I_\alpha$ must have a factor of either $P_i^{a_i+1}$ for some $1\leq i\leq k$ or $Q$ for some prime ideal $Q\nmid (t)$. Then $$R\cap (t)\nsubseteq \left(\bigcup_{i=1}^kP_i^{a_i+1}\right)\cup\left(\bigcup_{Q\in\spec(T),Q\nmid (x)}Q\right)\supseteq \bigcup_{\alpha\in \Gamma} I_\alpha,$$ so $R\cap (t)\nsubseteq \bigcup_{\alpha\in \Gamma} I_\alpha$. Then $3\implies 2$.
		
		Finally, we will relax the assumptions we have previously made about $T$ and assume that $1\in R$. In this case, we want to show $1\iff 4$. First, assume that $R$ is an associated subring of $T$. Let $\{u_\alpha\}_{\alpha\in \Gamma}\subseteq U(T)$ be a subset containing a representative from each coset of $\quot{U(T)}{U(R)}$, and let $t\in T$. Since $R$ is an associated subring of $T$, there is some $u\in U(T)$ such that $ut=r\in R$. Then letting $\alpha\in \Gamma$ such that $u_\alpha$ is a representative of the coset $u\cdot U(R)$, there exists some $v\in U(R)$ such that $u_\alpha=uv$. Then $u_\alpha t=uvt=vr\in R$. Finally, letting $\beta=0\in I$, we have $u_\alpha(t+\beta)\in R$. Then $1\implies 4$.
		
		For the converse, assume that condition 4 holds and let $t\in T$. Then since $U(T)$ is itself a subset of $U(T)$ containing a representative from each coset of $\quot{U(T)}{U(R)}$, there is some $u\in U(T)$ and $\beta\in I$ such that $u(t+\beta)=r\in R$. Then $ut=r-u\beta\in R$, and thus $R$ is an associated subring of $T$. Then $4\implies 1$, completing the proof.
	\end{proof}
	
	There a few important notes one can make about these characterizations of associated subrings. First, Condition 2 actually provides a purely ideal-theoretic characterization of the property (this can be seen more clearly if we refer to principal ideals $(t)\subseteq T$ rather than elements $t\in T$). Furthermore, Condition 4 is in general a much easier condition to check, requiring us only to consider cosets in $\quot{T}{I}$, $\quot{R}{I}$, and $\quot{U(T)}{U(R)}$ rather than all elements in $T$, $R$, and $U(T)$. In the case when $R$ is an order in a number field and $T=\Rbar$, which we will return to later on, this actually provides a characterization which can always be checked in finitely many steps. This is because in this case, $\quot{T}{I}$, $\quot{R}{I}$, and $\quot{U(T)}{U(R)}$ will all be finite.
	
	\begin{theorem}
		\label{ip equiv conditions}
		Let $T$ be a commutative ring, $R\subseteq T$ a subring (neither assumed to have identity), and $I=(R:T)$. Consider the following condition:
		\begin{enumerate}
			\item $R$ is an ideal-preserving subring of $T$.
		\end{enumerate}
		If $T$ is an integral domain, the following is equivalent to (1):
		\begin{enumerate}
			\item[(2)] For any $T$-ideal $J$ and $t\in T$, $$t\notin J\implies R\cap (t)\nsubseteq J.$$
		\end{enumerate}
		If $T$ is a Dedekind domain, the following is equivalent to (1):
		\begin{enumerate}
			\item[(3)] If $P_1,P_2$ are prime $T$-ideals, then $R\cap P_1\nsubseteq P_1^2$ and, if $P_1\neq P_2$, then $R\cap P_1\nsubseteq P_2$.
		\end{enumerate}
		If $T$ is a Dedekind domain and $1\in R$, the following is equivalent to (1):
		\begin{enumerate}
			\item[(4)] If $P_1,P_2$ are prime $T$-ideals containing $I$, then $R\cap P_1\nsubseteq P_1^2$, and if $P_1\neq P_2$, then $R\cap P_1\nsubseteq P_2$.
		\end{enumerate}
		If $T$ is a Dedekind domain, $1\in R$, and $T$ is integral over $R$, then the following is equivalent to (1):
		\begin{enumerate}
			\item[(5)] If $I=P_1^{a_1}\dots P_k^{a_k}$ is the factorization of $I$ into prime $T$-ideals, then $R\cap P_i\nsubseteq P_i^2$ for $1\leq i\leq k$ and 
			$$\quot{R}{I}\cong \prod_{i=1}^k\quot{R}{R\cap P_i^{a_i}}\cong \prod_{i=1}^k\quot{R+P_i^{a_i}}{P_i^{a_i}}.$$
		\end{enumerate}
	\end{theorem}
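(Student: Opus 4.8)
The plan is to establish the five conditions as a chain, proving each new condition equivalent to the one preceding it under its stronger hypothesis rather than returning to (1) each time. The equivalence (1) $\iff$ (2) is the cleanest: if $t \notin J$ then the principal $T$-ideal $(t)$ is not contained in $J$, so (1) gives $R \cap (t) \nsubseteq J$; conversely, given $T$-ideals $J_1 \nsubseteq J_2$, I would pick $t \in J_1 \setminus J_2$, note $(t) \subseteq J_1$ and hence $R \cap (t) \subseteq R \cap J_1$, and apply (2) to $t \notin J_2$. For (1) $\iff$ (3) over a Dedekind domain, the forward implication is immediate, since a nonzero prime satisfies $P_1 \supsetneq P_1^2$ and distinct nonzero primes are incomparable maximal ideals, so (1) yields both clauses of (3) directly.

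The substantive half is (3) $\implies$ (1), which I would route through (2). Given $t \notin J$, I factor $(t) = \prod_i P_i^{a_i}$ and select a prime $P_1$ at which $J$ has larger valuation than $t$, say $v_{P_1}(J) > v_{P_1}(t) =: a_1$. It then suffices to construct $r \in R$ with $v_{P_1}(r) = a_1$ exactly and $v_Q(r) \ge v_Q(t)$ for every other prime $Q$, since such an $r$ lies in $(t)$ but not in $J$. I would build $r$ as a product in $R$: take $x_1 \in R \cap P_1 \setminus P_1^2$ (valuation exactly $1$ at $P_1$) from the first clause of (3), and for each other prime $Q \mid (t)$ take $z_Q \in R \cap Q \setminus P_1$ (positive valuation at $Q$, zero valuation at $P_1$) from the second clause; then $r = x_1^{a_1} \prod_{Q} z_Q^{a_Q}$ has $P_1$-valuation exactly $a_1$ and valuation at least $a_Q = v_Q(t)$ at each other $Q$. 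Using the two clauses of (3) as the atomic operations for raising a valuation and for preserving a valuation while raising others is the engine of the whole theorem; the step that needs care is the degenerate case in which $t$ is a unit (the product above becomes empty), where I would instead invoke a second prime via the incomparability clause to witness $R \nsubseteq J$.

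For (1) $\iff$ (4) I only need (4) $\implies$ (3), i.e. to show the clauses of (3) are automatic at primes not containing the conductor $I$. The conductor supplies the needed elements of $R$: if $P \nsupseteq I$, choose $y \in I \setminus P$, so $v_P(y) = 0$ and $yT \subseteq R$; then $yp \in R \cap P \setminus P^2$ for any $p \in P \setminus P^2$, giving the first clause. For incomparability $R \cap P_1 \nsubseteq P_2$ with $P_1 \ne P_2$, I would split on the placement of $I$: when $P_2 \nsupseteq I$, multiply a conductor element avoiding $P_2$ by an element of $P_1 \setminus P_2$; when $P_1 \nsupseteq I$, use prime avoidance to find $m \in I \cap P_2$ with $m \equiv -1 \pmod{P_1}$ and set $r = 1 + m \in R$, which lies in $P_1$ yet is a unit modulo $P_2$. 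The remaining case, $P_1, P_2 \supseteq I$, is exactly what (4) asserts. The bookkeeping across these cases is the delicate part of this step.

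Finally, for (1) $\iff$ (5) under integrality I would reduce to (4). The primes containing $I$ are precisely $P_1, \dots, P_k$, so the first clauses of (4) and (5) coincide, and the second isomorphism in (5) is just the diamond isomorphism and is automatic. Because the $P_i^{a_i}$ are pairwise comaximal in $T$ one has $I = \bigcap_i P_i^{a_i}$, and since $I \subseteq R$ this gives $I = \bigcap_i (R \cap P_i^{a_i})$, so the natural map $R/I \to \prod_i R/(R \cap P_i^{a_i})$ is injective; the first isomorphism of (5) therefore holds if and only if the ideals $R \cap P_i^{a_i}$ are pairwise comaximal in $R$. I would then identify this comaximality with the incomparability clause of (4): if $R \cap P_i \subseteq P_j$ then $R \cap P_i^{a_i}$ and $R \cap P_j^{a_j}$ both lie in the proper ideal $R \cap P_j$, destroying comaximality, while conversely $\sqrt{R \cap P_i^{a_i}} = R \cap P_i$ together with integrality of $T$ over $R$ forces each $R \cap P_i$ to be maximal in $R$, so distinct contractions are comaximal and hence so are the $R \cap P_i^{a_i}$. \textbf{The main obstacle} I anticipate is the valuation-building construction for (3) $\implies$ (1), which carries the real weight; the conductor case analysis for (4) $\implies$ (3) and the comaximality argument for (4) $\iff$ (5) are then comparatively routine once integrality is used to promote the contracted primes $R \cap P_i$ to maximal ideals.
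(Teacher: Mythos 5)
Your proposal is correct and, on the legs $(1)\iff(2)$, $(1)\iff(3)$, and $(4)\implies(3)$, it is essentially the paper's own proof: the same reduction of $(2)$ to picking $t\in J_1\setminus J_2$, the same valuation-building product (the paper takes $\alpha_i\in R\cap P_i\setminus P_i^2$, $\alpha_j\in R\cap P_j\setminus P_i$ and forms $\alpha=\alpha_1^{a_1}\cdots\alpha_k^{a_k}$, exactly your $r=x_1^{a_1}\prod_Q z_Q^{a_Q}$, just applied to general ideals $J_1\nsubseteq J_2$ rather than routed through $(2)$), and the same conductor-element tricks for $(4)\implies(3)$, including a comaximality argument ($1=\alpha+\beta$ with $\alpha\in P_1$, $\beta\in I$) in the case $P_1\nmid I$, $P_2\mid I$. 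Where you genuinely diverge is the equivalence with $(5)$. The paper proves $(1)\implies(5)$ constructively: it uses the full ideal-preserving hypothesis to choose $x_i\in R\cap\prod_{j\neq i}P_j^{a_j}\setminus P_i$, then uses integrality in the form \emph{a unit of $T/P_i^{a_i}$ lying in the subring $(R+P_i^{a_i})/P_i^{a_i}$ has its inverse there} to correct $x_i$ to an element $r_i\equiv 1 \pmod{P_i^{a_i}}$ lying in each $P_j^{a_j}$, giving surjectivity of the natural map; the converse $(5)\implies(4)$ then reads elements off that isomorphism. You instead prove $(4)\iff(5)$ directly by identifying the isomorphism with pairwise comaximality of the contractions $R\cap P_i^{a_i}$, and you invoke integrality in a different standard form: contraction of a maximal ideal along an integral extension is maximal, so the incomparability clause makes the $R\cap P_i$ distinct maximal ideals of $R$, comaximality of the radicals lifts to the $R\cap P_i^{a_i}$, and CRT finishes. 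Both uses of integrality are legitimate; your route is more modular (it never re-invokes condition $(1)$ and isolates exactly where integrality is needed), while the paper's is more explicit and hands you the CRT idempotents directly.

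One further remark: you were right to flag the degenerate case where $t$ is a unit in $(3)\implies(1)$. Your repair (use a second prime to witness $R\nsubseteq J$) works whenever $T$ has at least two nonzero primes, but it cannot work in full generality, and in fact nothing can: if $T$ is a DVR with maximal ideal $P$ and $R=P$ (a subring without identity), then condition $(3)$ holds, yet $T\nsubseteq P$ while $R\cap T=P\subseteq P$, so $R$ is not ideal-preserving. So the stated equivalence $(1)\iff(3)$ genuinely needs either $1\in R$ or a second maximal ideal. The paper's own proof hits the same problem silently: when $J_1=T$ all exponents $a_j$ vanish and its product $\alpha$ is empty, i.e.\ equals $1$, which need not lie in $R$. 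This is a defect of the statement rather than of your argument; your write-up is the one that actually notices it.
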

	
	\begin{proof}
		Suppose that $T$ is an integral domain. In this case, we need to show $1\iff 2$. Assume that Condition 1 holds and let $t\in T$ and $J$ be a $T$-ideal. Then note that $t\notin J$ is equivalent to the statement $(t)\nsubseteq J$. Since $R$ is an ideal-preserving subring, this would imply that $R\cap (t)\nsubseteq J$. Then $1\implies 2$.
		
		Now assume that Condition 2 holds and let $J_1, J_2$ be $T$-ideals such that $J_1\nsubseteq J_2$. Then there is some $t\in J_1\backslash J_2$. Since $t\notin J_2$, Condition 2 tells us that $R\cap (t)\nsubseteq J_2$. Then since $R\cap (t)\subseteq R\cap J_1$, it follows that $R\cap J_1\nsubseteq J_2$. Then $R$ is an ideal-preserving subring of $T$, so $2\implies 1$.
		
		Now suppose that $T$ is a Dedekind domain; we want to show that $1\iff 3$. It is immediately clear in this case that $1\implies 3$, since the statement of $3$ is simply an application of the definition of an ideal-preserving subring to the prime ideals in $T$. We need to show the converse. To that end, let $J_1\nsubseteq J_2$ be $T$-ideals. Since $T$ is a Dedekind domain, this is equivalent to the statement $J_2\nmid J_1$. Then letting $J_1=P_1^{a_1}\dots P_k^{a_k}$ and $J_2=P_1^{b_1}\dots P_k^{b_k}$ be the representations of $J_1$ and $J_2$ as products of prime $T$-ideals (allowing the possibility that some $a_i$ or $b_j$ is 0), it must be the case that for some $1\leq i\leq k$, $b_i>a_i$. By Condition 3, we can select some element $\alpha_i\in R\cap P_i\backslash P_i^2$ and, for each $1\leq j\leq k$, $j\neq i$, we can select $\alpha_j\in R\cap P_j\backslash P_i$. Then the element $\alpha=\alpha_1^{a_1}\dots \alpha_k^{a_k}\in R\cap (P_1^{a_1}\dots P_k^{a_k})=R\cap J_1$. However, by the choice of the $\alpha_j$'s, $P_i^{a_i}$ is the exact power of $P_i$ dividing $(\alpha)$. Then since $b_i>a_i$, $\alpha\notin P_i^{b_i}\supseteq J_2$, so $\alpha\in R\cap J_1\backslash J_2$. Therefore, $R$ is an ideal-preserving subring of $T$, so $3\implies 1$.
		
		Next, suppose that $T$ is a Dedekind domain and $1\in R$. We want to show $1\iff 4$; since $T$ is a Dedekind domain, it will suffice to show $3\iff 4$. Again, $3\implies 4$ trivially, since we are only limiting the prime $T$-ideals we are considering. We need only show the converse, $4\implies 3$. To do so, we will show that Condition 3 will always hold if either $P_1$ or $P_2$ (or both) do not divide the conductor ideal. It is worth noting at this point that we will be using the convention that any ideal divides (contains) the zero ideal; thus, if $I=\{0\}$, Conditions 3 and 4 are exactly the same.
		
		First, suppose that $P_1$ is a prime $T$-ideal such that $P_1\nmid I$ (if $I$ has a prime divisor; if not, then $T=R=I$ and $R$ is trivially an ideal-preserving subring of $T$). Then letting $\alpha\in P_1\backslash P_1^2$ and $\beta\in I\backslash P_1$, we have $\alpha\beta\in R\cap P_1\backslash P_1^2$, so $R\cap P_1\nsubseteq P_1^2$. Now let $P_1$ and $P_2$ be distinct prime $T$-ideals. If $P_2\nmid I$, then let $\alpha\in P_1\backslash P_2$ and $\beta\in I\backslash P_2$. Then $\alpha\beta\in R\cap P_1\backslash P_2$, so $R\cap P_1\nsubseteq P_2$. 
		
		The only remaining case to consider is when $P_1\nmid I$ and $P_2|I$. In this case, note that $P_1$ and $I$ are relatively prime ideals in $T$, and thus there exist $\alpha\in P_1$ and $\beta\in I$ such that $1=\alpha+\beta$. Then note that $\alpha=1-\beta\in P_1$. Since $\beta\in I\subseteq P_2$, $\alpha=1-\beta\notin P_2$. Finally, since $1\in R$ and $\beta\in I\subseteq R$, $\alpha=1-\beta\in R$. Then $\alpha=1-\beta\in R\cap P_1\backslash P_2$, so $R\cap P_1\nsubseteq P_2$. Then $4\implies 3$.
		
		Finally, assume that $T$ is a Dedekind domain, $1\in R$, and $T$ is integral over $R$. We want to show $1\iff 5$. To start, we will note that the second isomorphism in the statement of Condition 5 will always hold, regardless of whether $R$ is an ideal-preserving subring of $T$. This can easily be seen by considering the obvious mapping $\phi:\prod_{i=1}^k\quot{R}{R\cap P_i^{a_i}}\to \prod_{i=1}^k\quot{R+P_i^{a_i}}{P_i^{a_i}}$ defined by $\phi(r_1+R\cap P_1^{a_1},\dots,r_k+R\cap P_k^{a_k})=(r_1+P_1^{a_1},\dots,r_k+P_k^{a_k})$. We will focus on the first isomorphism.
		
		Assume that $R$ is an ideal-preserving subring of $T$. This immediately tells us that for each $P_i$ dividing $I$, we have $R\cap P_i\nsubseteq P_i^2$. Now let $\phi:\quot{R}{I}\to \prod_{i=1}^k\quot{R}{R\cap P_i^{a_i}}$ be the mapping defined by $\phi(r+I)=(r+R\cap P_1^{a_1},\dots,r+R\cap P_k^{a_k})$. It is plain to see that $\phi$ is a well-defined homomorphism; moreover, $\phi(r+I)=0\implies r\in \bigcap_{i=1}^k R\cap P_i^{a_i}=I$. Then $\ker(\phi)=\{0\}$, so $\phi$ is injective. All that remains is to show that $\phi$ is surjective. To show this, we will show that for each $1\leq i\leq k$, there exists $r_i\in R$ such that $\phi(r_i+I)$ is 0 in every coordinate except the $i^{th}$, in which it is $1+P_i^{a_i}$; that is, $r_i\equiv 1\modulo{P_i^{a_i}}$ and $r_i\in P_j^{a_j}$ for each $j\neq i$.
		
		Since $R$ is an ideal-preserving subring of $T$, we can select some $x_i\in R\cap \prod_{j\neq i}P_j^{a_j}\backslash P_i$. Now since $x_i\notin P_i$, we know that $x_i+I\in U(\sfrac{T}{P_i^{a_i}})$. Let $S\subseteq \quot{T}{P_i^{a_i}}$ be the subset whose cosets contain elements of $R$. Since $R$ is a subring of $T$ and $T$ is integral over $R$, it is plain to see that $S$ is a subring of $\quot{T}{P_i^{a_i}}$ and $\quot{T}{P_i^{a_i}}$ is integral over $S$. Then any element of $U(\sfrac{T}{P_i^{a_i}})$ which lies in $S$ has its inverse lying in $S$; in particular, this means that there exists $y_i\in R$ such that $x_iy_i+P_i^{a_i}=1+P_i^{a_i}$. Then $r_i:=x_iy_i\equiv 1\modulo{P_i^{a_i}}$ and $r_i\in P_j^{a_j}$ for every $j\neq i$. Then $\phi$ is onto and is thus an isomorphism. Therefore, $1\implies 5$.
		
		All that remains is to show that $5\implies 1$; to do so, we will show that $5\implies 4$. Since 5 makes the assumption that for any $P_i|I$, $R\cap P_i\nsubseteq P_i^2$, we only need to show that for any prime $T$-ideals $P_i,P_j$ dividing $I$, $R\cap P_i\nsubseteq P_j$. Using the isomorphism $\phi$ developed above, there must be some $r\in R$ such that $r\in P_i^{a_i}$ and $r\equiv 1\modulo{P_j^{a_j}}$. Then $r\in R\cap P_i\backslash P_j$, so $R\cap P_i\nsubseteq P_j$. Thus, $5\implies 4$, completing the proof. 
	\end{proof}
	
	Again, we can make a few notes about these characterizations. From Condition 2, we can see that when working in an integral domain, it suffices to check the ideal-preserving condition when $J_1$ is a principal ideal. Comparing this with Condition 2 for associated subrings, we note that the only difference (after applying the contrapositive) is that in an associated subring, the principal ideal $(t)$ remains separate from any arbitrary union of ideals when restricted to $R$, rather than just a single ideal. Thus, associated subrings can, in a sense, be thought of as ``strongly" ideal-preserving. Another thing we might observe is that Conditions 3 and 4 greatly reduce the scope of what needs to be checked to verify the ideal-preserving condition. In fact, if the conductor ideal $I$ is nonzero, Condition 4 can be checked in finitely many steps. Again, this will always be the case when $R$ is an order in a number field and $T=\Rbar$.
	
	To produce a similar set of characterizations for locally associated subrings, we need the following lemma from \cite{neukirch}. For a detailed proof, see \cite{thesis}.
	
	\begin{lemma}
		\label{exact sequence}
		Let $R$ be an order in a number field $K$ with conductor ideal $I$. Then there is an exact sequence
		$$1\to U(R)\to U(\Rbar)\times U(\quot{R}{I})\to U(\quot{\Rbar}{I})\to \Cl(R)\to \Cl(\Rbar)\to 1.$$
		Thus, the class numbers $\abs{\Cl(R)}$ and $\abs{\Cl(\Rbar)}$ are related as follows:
		$$\abs{\Cl(R)}=\abs{\Cl(\Rbar)}\cdot \frac{\abs{U(\quot{\Rbar}{I})}}{\abs{U(\quot{R}{I})}\cdot \abs{\quot{U(\Rbar)}{U(R)}}}.$$
	\end{lemma}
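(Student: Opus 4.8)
The plan is to build the sequence out of four explicit homomorphisms, verify exactness at each of the four interior nodes, and then read off the class-number formula by counting on a finite tail. The structural fact making everything work is that, since $I\subseteq R$ is an ideal of both $R$ and $\Rbar$, the square with corners $R,\Rbar,R/I,\Rbar/I$ is Cartesian: one has $R=\{v\in\Rbar: v+I\in R/I\}$ and $\Rbar\to\Rbar/I$ is surjective. This is a conductor (Milnor) square, and the asserted sequence is exactly the units--Picard Mayer--Vietoris sequence attached to it, using $\Pic(R)=\Cl(R)$, $\Pic(\Rbar)=\Cl(\Rbar)$, and the vanishing $\Pic(R/I)=\Pic(\Rbar/I)=0$ (both quotients are finite, hence semilocal). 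I would nonetheless carry it out by hand. Define $\alpha\colon U(R)\to U(\Rbar)\times U(R/I)$ by $\alpha(u)=(u,\,u+I)$; define $\beta\colon U(\Rbar)\times U(R/I)\to U(\Rbar/I)$ by $\beta(v,\,r+I)=(v+I)(r+I)^{-1}$; let $\gamma\colon\Cl(R)\to\Cl(\Rbar)$ be extension of ideals, $\gamma([\mathfrak a])=[\mathfrak a\Rbar]$; and let $\delta\colon U(\Rbar/I)\to\Cl(R)$ be the connecting map described below.

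First I would record the small but crucial fact that $U_1:=\ker\!\big(U(\Rbar)\to U(\Rbar/I)\big)$ lies in $R$ and equals $\ker\!\big(U(R)\to U(R/I)\big)$: if $v\in U(\Rbar)$ with $v\equiv 1\pmod I$ then $v=1+a$ with $a\in I\subseteq R$, and the same applies to $v^{-1}$, so $v\in U(R)$. The same argument shows that any $v\in U(\Rbar)\cap R$ whose image lies in $U(R/I)$ already lies in $U(R)$. Exactness at $U(R)$ (injectivity of $\alpha$) and at $U(\Rbar)\times U(R/I)$ then both reduce to this observation, since $\ker\beta=\{(v,\,v+I): v\in U(R)\}=\im\alpha$.

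The heart of the argument --- and the step I expect to be the main obstacle --- is the connecting map together with exactness at $U(\Rbar/I)$ and at $\Cl(R)$. For this I would invoke the standard correspondence for orders: contraction $\mathfrak A\mapsto\mathfrak A\cap R$ and extension $\mathfrak a\mapsto\mathfrak a\Rbar$ are mutually inverse isomorphisms between the group of fractional $\Rbar$-ideals prime to $I$ and the group of invertible fractional $R$-ideals prime to $I$, and every ideal class of $R$ (resp.\ $\Rbar$) has a representative prime to $I$ by prime avoidance at the finitely many primes dividing $I$. Given $x+I\in U(\Rbar/I)$, I lift it to $x\in\Rbar$ prime to $I$ and set $\delta(x+I)=[\,x\Rbar\cap R\,]$; one checks this is independent of the lift and multiplicative, so $\delta$ is a homomorphism. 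Since $x\Rbar$ is principal, $\gamma(\delta(x+I))=[x\Rbar]=1$, giving $\im\delta\subseteq\ker\gamma$; conversely, if $\mathfrak a\Rbar$ is principal, choosing $\mathfrak a$ prime to $I$ exhibits $[\mathfrak a]$ as $\delta(x+I)$ for $x\Rbar=\mathfrak a\Rbar$, so $\ker\gamma=\im\delta$. Surjectivity of $\gamma$ follows because every $\Rbar$-class is represented by some $\mathfrak A$ prime to $I$ and $[\mathfrak A]=\gamma([\mathfrak A\cap R])$. Finally, $\ker\delta$ consists of those $x+I$ for which $x\Rbar\cap R$ is principal in $R$, and unwinding the correspondence shows this occurs precisely when $x+I\in\im\beta=\overline{U(\Rbar)}\cdot U(R/I)$; this bookkeeping is the most delicate point and is where I would spend the most care.

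For the class-number formula I would truncate to the tail $1\to\im\beta\to U(\Rbar/I)\xrightarrow{\delta}\Cl(R)\xrightarrow{\gamma}\Cl(\Rbar)\to 1$, in which every group is finite, so exactness gives $|U(\Rbar/I)|/|\im\beta|=|\ker\gamma|=|\Cl(R)|/|\Cl(\Rbar)|$. It then remains to compute $|\im\beta|$. Writing $\overline{U(\Rbar)}$ and $\overline{U(R)}$ for the images of $U(\Rbar)$ and $U(R)$ in $U(\Rbar/I)$, the facts above give $\im\beta=\overline{U(\Rbar)}\cdot U(R/I)$ and $\overline{U(\Rbar)}\cap U(R/I)=\overline{U(R)}$, whence
\[
|\im\beta|=\frac{|\overline{U(\Rbar)}|\cdot|U(R/I)|}{|\overline{U(R)}|}=|U(R/I)|\cdot[U(\Rbar):U(R)],
\]
using $|\overline{U(\Rbar)}|/|\overline{U(R)}|=[U(\Rbar):U_1]/[U(R):U_1]=[U(\Rbar):U(R)]$, which is finite because $U_1$ sits inside both unit groups and $R,\Rbar$ share the same unit rank (both have fraction field $K$). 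Substituting gives $|\Cl(R)|=|\Cl(\Rbar)|\cdot\dfrac{|U(\Rbar/I)|}{|U(R/I)|\cdot|U(\Rbar)/U(R)|}$, as claimed.
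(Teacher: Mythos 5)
Your proof is correct, and it is essentially the same argument as the one the paper relies on: the paper does not prove this lemma itself but quotes it from \cite{neukirch} (with details deferred to \cite{thesis}), and the standard proof there is exactly your construction---the four maps attached to the conductor (Milnor) square, exactness via the correspondence between fractional ideals prime to $I$ in $R$ and in $\Rbar$, and counting along the finite tail. The one verification you defer, well-definedness of $\delta$, does go through routinely: if $x\equiv x'\pmod{I}$ with both prime to $I$, then $x'(x\Rbar\cap R)=x(x'\Rbar\cap R)$ because $I\Rbar=I\subseteq R$, so the two contractions differ by the principal fractional ideal generated by $x/x'$ and define the same class in $\Cl(R)$.
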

	
	\begin{theorem}
		\label{la equiv conditions}
		Let $T$ be a commutative ring with identity, $R\subseteq T$ a subring with identity, and $I=(R:T)$. The following are equivalent:
		\begin{enumerate}
			\item $R$ is a locally associated subring of $T$.
			\item Every coset in $\quot{U(\sfrac{T}{I})}{U(\sfrac{R}{I})}$ contains a unit in $T$; that is, for any $t+I\in U(\sfrac{T}{I})$, there exist $r+I\in U(\sfrac{R}{I})$ and $\beta\in I$ such that $tr+\beta\in U(T)$.
			\item If $t\in T$ is relatively prime to $I$, i.e. $tT+I=T$, then there exists $r\in R$ relatively prime to $I$, i.e. $rR+I=R$, and $u\in U(T)$ such that $t=ru$.
		\end{enumerate}
		If $R$ is an order in a number field and $T=\Rbar$, the following are equivalent to (1):
		\begin{enumerate}
			\item[(4)] $\abs{\quot{U(T)}{U(R)}}=\frac{\abs{U(\quot{T}{I})}}{\abs{U(\quot{R}{I})}}$.
			\item[(5)] $\abs{\Cl(T)}=\abs{\Cl(R)}$.
			\item[(6)] $\Cl(T)\cong\Cl(R)$.
		\end{enumerate}
	\end{theorem}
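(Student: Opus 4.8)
The equivalence of (1), (2), and (3) is exactly the content of Proposition \ref{la first conditions}, so the plan is to establish the remaining equivalences in the number-field setting $T=\Rbar$, where all of the relevant groups are finite: $T/I$ and $R/I$ are finite rings (the conductor $I$ is a nonzero ideal of $\Rbar$, which is a finitely generated $\bZ$-module), and $\quot{U(T)}{U(R)}$ is finite since $U(R)$ has finite index in $U(\Rbar)$ by Dirichlet's unit theorem. I would prove $(1)\iff(4)$, then $(4)\iff(5)$, then $(5)\iff(6)$.

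For $(1)\iff(4)$, I would reuse the homomorphism $\phi\colon U(T)\to\quot{U(\sfrac{T}{I})}{U(\sfrac{R}{I})}$ constructed in the proof of Proposition \ref{la first conditions}, where it was shown that $\ker\phi=U(R)$, hence $\quot{U(T)}{U(R)}\cong\phi(U(T))$, and that $R$ is locally associated precisely when $\phi$ is surjective. Since $I\subseteq R$, the natural map $R/I\hookrightarrow T/I$ is injective, so $U(\sfrac{R}{I})$ is genuinely a subgroup of $U(\sfrac{T}{I})$ and $\abs{\quot{U(\sfrac{T}{I})}{U(\sfrac{R}{I})}}=\frac{\abs{U(\sfrac{T}{I})}}{\abs{U(\sfrac{R}{I})}}$. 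Combining these gives $\abs{\quot{U(T)}{U(R)}}=\abs{\phi(U(T))}\le\frac{\abs{U(\sfrac{T}{I})}}{\abs{U(\sfrac{R}{I})}}$, with equality if and only if $\phi$ is onto. This final equality is exactly Condition (4), so $(1)\iff(4)$.

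For $(4)\iff(5)$, the key is Lemma \ref{exact sequence}, whose class-number formula rearranges to
$$\frac{\abs{\Cl(R)}}{\abs{\Cl(\Rbar)}}=\frac{\abs{U(\quot{\Rbar}{I})}}{\abs{U(\quot{R}{I})}\cdot\abs{\quot{U(\Rbar)}{U(R)}}}.$$
Condition (4) asserts precisely that $\abs{\quot{U(\Rbar)}{U(R)}}\cdot\abs{U(\quot{R}{I})}=\abs{U(\quot{\Rbar}{I})}$, i.e. that the right-hand side above equals $1$. Hence (4) holds if and only if $\abs{\Cl(R)}=\abs{\Cl(\Rbar)}=\abs{\Cl(T)}$, which is Condition (5).

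Finally, for $(5)\iff(6)$, the implication $(6)\implies(5)$ is immediate. For $(5)\implies(6)$, I would invoke the tail of the exact sequence in Lemma \ref{exact sequence}, which provides a surjection $\Cl(R)\to\Cl(\Rbar)=\Cl(T)$; when $\abs{\Cl(R)}=\abs{\Cl(T)}$ and both are finite, a surjective homomorphism between groups of equal finite order is an isomorphism, giving $\Cl(R)\cong\Cl(T)$. I expect the main obstacle to be bookkeeping rather than any deep idea: the substantive input is packaged into Lemma \ref{exact sequence}, and the crux is recognizing that Condition (4) is exactly the numerical statement forcing the class-number ratio to equal $1$. Care is needed to confirm that $U(\sfrac{R}{I})$ embeds as a subgroup of $U(\sfrac{T}{I})$ (so the defining quotient is meaningful) and that every group involved is finite, so that the equality-of-cardinalities arguments are valid.
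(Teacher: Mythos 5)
Your proposal is correct and follows essentially the same route as the paper: it invokes Proposition \ref{la first conditions} for the equivalence of (1)--(3), uses the injective map $\phi$ from that proposition's proof together with finiteness (via Dirichlet's unit theorem and finiteness of $\quot{\Rbar}{I}$) to get $(1)\iff(4)$, and derives $(4)\iff(5)\iff(6)$ from the class-number formula and the surjection $\Cl(R)\to\Cl(\Rbar)$ supplied by Lemma \ref{exact sequence}. Your extra care in verifying that $U(\sfrac{R}{I})$ embeds as a subgroup of $U(\sfrac{T}{I})$ is a detail the paper leaves implicit, but it is not a different argument.
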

	
	\begin{proof}
		We have already seen from Proposition \ref{la first conditions} that the first three conditions are equivalent. We just need to show the same for Conditions 4, 5, and 6.
		
		Suppose that $R$ is an order in a number field and $T=\Rbar$. In this case, we know that $U(\sfrac{T}{I})$, $U(\sfrac{R}{I})$, and $\quot{U(\Rbar)}{U(R)}$ are finite groups. Now recall the injective map $\phi:\quot{U(T)}{U(R)}\to \quot{U(\sfrac{T}{I})}{U(\sfrac{R}{I})}$ discussed in the proof of Proposition \ref{la first conditions}. Since this is an injective map from a finite group into a finite group, $\phi$ is an isomorphism if and only if its domain and codomain have the same order. That is, $1\iff 4$. 
		
		The equivalence $4\iff 5$ follows immediately from the lemma, since $\abs{\Cl(T)}=\abs{\Cl(R)}$ if and only if the fraction multiplier is equal to 1. Finally, since the exact sequence presented in the lemma states that there is a surjective homomorphism from $\Cl(R)$ onto $\Cl(T)$, both finite groups, it follows that these groups are isomorphic if and only if they have the same order. Then $5\iff 6$, completing the proof.
	\end{proof}
	
	These characterizations of locally associated subrings (in particular, those that hold in the case when $R$ is an order in a number field and $T=\Rbar$) provide not only a purely ideal-theoretic characterization in Condition 6, but also a characterization that can be more easily checked in finitely many steps in Condition 4.
	
	\section{Related Rings}
	With these equivalent characterizations of associated, ideal-preserving, and locally associated subrings in hand, we will now explore how these properties can allow us to understand rings related to $T$ and $R$. First, we consider the rings of polynomials and formal power series over $T$ and $R$.
	
	\begin{theorem}
		\label{la ps and poly}
		Let $T$ be an integral domain, $R$ a subring of $T$ with identity, and $I=(R:T)$. If $R$ is a locally associated subring of $T$, then $R[x]$ is a locally associated subring of $T[x]$ and $R[[x]]$ is a locally associated subring of $T[[x]]$. 
	\end{theorem}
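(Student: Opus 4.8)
The plan is to check, for each of the extensions $R[x]\subseteq T[x]$ and $R[[x]]\subseteq T[[x]]$, the unit-group formulation of local association from Proposition~\ref{la first conditions}: that the natural homomorphism $\phi$ from the units of the larger ring into the quotient of unit groups modulo the relevant conductor is surjective. First I would compute the conductors. Testing membership against constant polynomials (resp.\ series) forces every coefficient into $(R:T)=I$, while conversely $I\cdot T\subseteq R$ acts coefficientwise; this gives
$$(R[x]:T[x])=I[x],\qquad (R[[x]]:T[[x]])=I[[x]],$$
so that $T[x]/I[x]\cong (T/I)[x]$ and $T[[x]]/I[[x]]\cong (T/I)[[x]]$, and likewise for $R$. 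Next I would record the unit groups: since $T$ is a domain, $U(T[x])=U(T)$ and $U(R[x])=U(R)$, whereas for any commutative ring $A$ one has $U(A[[x]])=\{f: f(0)\in U(A)\}$, which splits as an (abelian, hence split) extension $U(A[[x]])\cong U(A)\times\bigl(1+xA[[x]]\bigr)$ via $f\mapsto f(0)$.

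For the power series statement the idea is to decompose both sides along this splitting:
$$\frac{U(T[[x]])}{U(R[[x]])}\cong \frac{U(T)}{U(R)}\times\frac{1+xT[[x]]}{1+xR[[x]]},\qquad \frac{U((T/I)[[x]])}{U((R/I)[[x]])}\cong \frac{U(T/I)}{U(R/I)}\times\frac{1+x(T/I)[[x]]}{1+x(R/I)[[x]]}.$$
The first factors agree because $R$ is locally associated in $T$. For the second factors I would use that the natural coefficientwise reduction $1+xT[[x]]\to \frac{1+x(T/I)[[x]]}{1+x(R/I)[[x]]}$ is surjective (reduction modulo $I$ is onto coefficient by coefficient) with kernel exactly $1+xR[[x]]$ (since $I\subseteq R$, a series reduces into $(R/I)[[x]]$ iff it already lies in $R[[x]]$). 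As $\phi$ is the product of these two natural maps, it is onto, so $R[[x]]$ is a locally associated subring of $T[[x]]$.

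The hard part is the polynomial statement, and here I expect the main obstacle to be genuine rather than bookkeeping. The analogous argument needs the natural map to be onto on the higher-order factor, but $U(T[x])=U(T)$ contributes only constants, whereas $U((T/I)[x])$ contains every $1+(\text{nilpotent})x+\cdots$ (a polynomial over a ring is a unit iff its constant term is a unit and its higher coefficients are nilpotent). So $\phi$ cannot be onto unless $T/I$ is reduced, i.e.\ unless $\sqrt{I}\subseteq R$ (recall $I\subseteq R$ always). When $I$ is radical this obstruction disappears and the polynomial case reduces exactly to local association of $R$ in $T$; in general, though, I suspect it fails. Indeed, taking $T=\bZ[i]$, $R=\bZ[2i]$, and $I=(2)=(1+i)^2$, one checks that $R$ is locally associated in $T$ (both $\frac{U(T)}{U(R)}$ and $\frac{U(T/I)}{U(R/I)}$ have order $2$), yet $f=1+(1+i)x$ is relatively prime to $I[x]$ while no unit $u\in U(T[x])=U(T)$ makes $u^{-1}f$ lie in $R[x]$, so $R[x]$ is \emph{not} locally associated in $T[x]$. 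I would therefore anticipate needing the extra hypothesis that $I$ is a radical ideal (equivalently $T/I$ is reduced) to push the polynomial half through — a hypothesis already familiar from Theorem~\ref{elasticities equal}.
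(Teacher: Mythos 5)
Your treatment of the power series half is correct, and it takes a mildly different route from the paper's. The paper verifies Condition 2 of Proposition \ref{la first conditions} directly: if $f=a_0+a_1x+\dots\in T[[x]]$ is a unit modulo $I[[x]]$, then $a_0+I\in U(\sfrac{T}{I})$, so local association of $R$ in $T$ produces a constant $r\in R$ with $r+I\in U(\sfrac{R}{I})$ and a constant $\beta\in I$ such that $a_0r+\beta\in U(T)$; then $fr+\beta$ is a unit of $T[[x]]$ because its constant term is. Your argument instead splits $U(A[[x]])\cong U(A)\times(1+xA[[x]])$ and checks surjectivity factor by factor; the key observation that the coefficientwise reduction $1+xT[[x]]\to (1+x(\sfrac{T}{I})[[x]])/(1+x(\sfrac{R}{I})[[x]])$ is onto with kernel exactly $1+xR[[x]]$ is right, since the preimage of $\sfrac{R}{I}$ in $T$ is $R+I=R$. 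Both proofs are sound.

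On the polynomial half you have done something more valuable than reproduce the paper's argument: you have exposed a genuine error in it. The paper claims that $f=a_0+\dots+a_nx^n$ is a unit modulo $I[x]$ if and only if $a_0+I\in U(\sfrac{T}{I})$ and $a_i\in I$ for $i\geq 1$, justifying this by the fact that $U(S[x])=U(S)$ for an integral domain $S$. But the ring that matters is $S=\quot{T}{I}$, which need not be a domain or even reduced; the correct condition on the higher coefficients is nilpotence modulo $I$, exactly as you say. Your counterexample checks out: with $T=\bZ[i]$, $R=\bZ[2i]$, $I=(2)$, the subring $R$ is locally associated in $T$ (the natural map of Proposition \ref{la first conditions} is injective and both quotient groups have order $2$), while $f=1+(1+i)x$ satisfies $f^2\equiv 1\modulo{2T[x]}$, hence is relatively prime to $I[x]$, yet none of $\pm f,\pm if$ lies in $R[x]$, so Condition 3 fails for $R[x]\subseteq T[x]$. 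One can even see the failure at the level of the defining isomorphism: $U((\sfrac{T}{I})[x])$ is infinite (it contains $1+(1+i)x^k$ for every $k$), while $U((\sfrac{R}{I})[x])$ is trivial and $\quot{U(T[x])}{U(R[x])}$ has order $2$, so no isomorphism can exist. The polynomial statement of Theorem \ref{la ps and poly} is therefore false as written, and the paper's own proof breaks precisely at the step you identified: its polynomial $h(x)=\beta-ra_1x-\dots-ra_nx^n$ need not lie in $I[x]$ when the $a_i$ are merely nilpotent modulo $I$ rather than in $I$. Your proposed repair --- assume $I$ is a radical $T$-ideal, so that $\quot{T}{I}$ is reduced and units of $(\sfrac{T}{I})[x]$ have higher coefficients in $I$ --- is the right hypothesis, and under it the paper's construction of $g$ and $h$ goes through verbatim.
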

	
	\begin{proof}
		It is not difficult to see that $I[x]=(R[x]:T[x])$ and $I[[x]]=(R[[x]]:T[[x]])$. We will start by showing that $R[x]$ is a locally associated subring of $T[x]$. Using the second characterization of locally associated subrings from Theorem \ref{la equiv conditions}, we can do so by showing that for any $f+I[x]\in U(\sfrac{T[x]}{I[x]})$, there exist $g+I[x]\in U(\sfrac{R[x]}{I[x]})$ and $h\in I[x]$ such that $fg+h\in U(T[x])$.
		
		Recall that for any integral domain $S$, $U(S[x])=U(S)$. Then $f(x)=a_0+a_1x+\dots+a_nx^n\in T[x]$ is a unit modulo $I[x]$ if and only if $a_0+I\in U(\sfrac{T}{I})$ and $a_i\in I$ for $1\leq i\leq n$. Since $R$ is a locally associated subring of $T$, there must therefore be some $r+I\in U(\sfrac{R}{I})$ and $\beta\in I$ such that $a_0r+\beta=u\in U(T)$. Then letting $g(x)=r$ (with $g+I[x]\in U(\sfrac{T[x]}{I[x]}))$ and $h(x)=\beta-ra_1x-\dots-ra_nx^n\in I[x]$, we have that $fg+h=u\in U(T[x])$. Then $R[x]$ is a locally associated subring of $T[x]$.
		
		The argument for $R[[x]]$ and $T[[x]]$ is similar; in this case, recall that for any commutative ring with identity $S$, $U(S[[x]])=U(S)+xS[[x]]$. Then $f(x)=a_0+a_1x+a_2x^2+\dots\in T[[x]]$ is a unit modulo $I[[x]]$ if and only if $a_0+I\in U(\sfrac{T}{I})$. Since $R$ is a locally associated subring of $T$, there must be some $r+ I\in U(\sfrac{R}{I})$ and $\beta\in I$ such that $a_0r+\beta\in U(T)$. Then $fr+\beta=u+a_1rx+a_2rx^2+\dots\in U(T[[x]])$, with $r+I[[x]]\in U(\sfrac{R[[x]]}{I[[x]]})$ and $\beta\in I[[x]]$. Then $R[[x]]$ is a locally associated subring of $T[[x]]$.
	\end{proof}
	
	This same sort of inheritance can be observed in some specific cases for ideal-preserving and associated subrings as well. For instance, since $\bQ$ is a field, $\bZ$ is an associated (and thus ideal-preserving) subring of $\bQ$. As we have previously noted, $\bZ[x]$ is an associated (and thus ideal-preserving) subring of $\bQ[x]$ and $\bZ[[x]]$ is an associated (and thus ideal-preserving) subring of $\bQ[[x]]$. However, this type of inheritance is not seen in general; in fact, inheritance of the associated subring property in polynomial rings will never be seen if $T$ is integral over its proper subring $R$, as the following result shows.
	
	\begin{theorem}
		\label{poly not associated}
		Let $T$ be an integral domain and $R$ an associated subring (with identity) of $T$. If either $U(T)\cap R=U(R)$ or $\quot{U(T)}{U(R)}$ is finite, then $R[x]$ is an associated subring of $T[x]$ if and only if $R=T$.
	\end{theorem}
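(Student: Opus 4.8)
The reverse implication is immediate: if $R=T$ then $R[x]=T[x]$, and any ring is trivially an associated subring of itself, so all the content lies in the forward direction. The plan is to assume $R[x]$ is an associated subring of $T[x]$ and deduce $R=T$, treating the two hypotheses separately. The first step, common to both cases, is to unwind the assumption. Since $T$ is an integral domain, recall that $U(T[x])=U(T)$. Hence $R[x]$ being associated in $T[x]$ means precisely that for every $f=a_0+a_1x+\dots+a_nx^n\in T[x]$ there is a \emph{single} unit $u\in U(T)$ with $u^{-1}f\in R[x]$, i.e. $a_i\in uR$ for all $i$ simultaneously. The crux is that one unit must clear all coefficients at once, whereas the hypothesis that $R$ is merely associated in $T$ only places each $a_i$ individually in some (possibly different) translate $uR$.

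For the case $U(T)\cap R=U(R)$, the plan is to test the associated condition on the linear polynomials $f=1+tx$ for arbitrary $t\in T$. Clearing coefficients yields $u\in U(T)$ with $u^{-1}\cdot 1=u^{-1}\in R$ and $u^{-1}t\in R$. The constant term forces $u^{-1}\in R\cap U(T)=U(R)$, so $u\in U(R)\subseteq R$, and then $t=u(u^{-1}t)\in R$. As $t$ is arbitrary, $T\subseteq R$ and we are done; this case is short and direct.

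For the case $\quot{U(T)}{U(R)}$ finite, the plan is a pigeonhole argument. Fix coset representatives $u_1,\dots,u_N$ of $\quot{U(T)}{U(R)}$; since $R$ is associated in $T$ we have $T=\bigcup_{j=1}^N u_jR$, where $uR$ depends only on the coset $uU(R)$. The unwound hypothesis says that every finite subset of $T$ (viewed as the coefficient set of a polynomial) is contained in a single $u_jR$. If no single $u_jR$ contained all of $T$, then choosing $t_j\in T\backslash u_jR$ for each $j$ would produce a finite set lying in no $u_jR$, a contradiction; hence $T=u_jR$ for some $j$. Writing $T=uR$ with $u\in U(T)$, the conclusion follows by an elementary manipulation: $1\in uR$ gives $u^{-1}\in R$, and multiplying $T=uR$ by $u^{-1}$ gives $u^{-1}T=R$, whence $u=u^{-1}u^2\in u^{-1}T=R$; so $u\in U(R)$ and therefore $T=uR=R$.

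The routine direction and the first case are short, so I expect the substantive step to be the finite case, and within it the passage from ``every finite coefficient set is clearable by one unit'' to ``all of $T$ is clearable by one unit.'' This is the only place where the global structure of $T$ (rather than the behavior of individual elements) enters, and it is exactly where finiteness of $\quot{U(T)}{U(R)}$ is indispensable. A secondary point to get right is the deduction $T=uR\Rightarrow R=T$, which requires the observation $u=u^{-1}u^2\in R$ rather than a naive attempt to invert $u^{-1}\in R$, since the inverse of a ring element need not lie in the ring.
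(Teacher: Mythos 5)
Your proof is correct. The routine direction, the use of $U(T[x])=U(T)$ for an integral domain, and the entire first case (testing on $1+tx$, so that $u^{-1}\in R\cap U(T)=U(R)$ forces $t\in R$) coincide exactly with the paper's proof. Your second case rests on the same pigeonhole-via-one-polynomial idea as the paper's, but is organized genuinely differently. The paper argues element-by-element and directly: for an arbitrary $t\in T$ it forms the polynomial $u_0^{-1}t+u_1^{-1}tx+\dots+u_k^{-1}tx^k$, whose coefficients are the associates of $t$ by the coset representatives; if a single $u\in U(T)$ clears every coefficient, then $u$ lies in some coset $u_iU(R)$, so $uu_i^{-1}\in U(R)$ and hence $t=(uu_i^{-1})^{-1}(uu_i^{-1}t)\in R$. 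You instead argue by contradiction at the level of the covering $T=\bigcup_{j}u_jR$ (which additionally invokes the hypothesis that $R$ itself is associated in $T$, something the paper's second case never uses): witnesses $t_j\in T\setminus u_jR$ packed into one polynomial defeat every unit, forcing $T=uR$ for a single unit $u$, after which your $u=u^{-1}u^2$ manipulation finishes. Your route isolates cleanly where finiteness enters and yields the intermediate structural fact that $T$ is a single translate of $R$; the paper's route is shorter, avoids the contradiction and the final manipulation, and in that case needs only the associatedness of $R[x]$ in $T[x]$ rather than that of $R$ in $T$. Both arguments are sound.
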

	
	\begin{proof}
		Suppose first that $U(T)\cap R=U(R)$. Also assume that $R[x]$ is an associated subring of $T[x]$. Then for any $t\in T$, there must be some $u\in U(T[x])=U(T)$ such that $u(1+tx)=u+(ut)x\in R[x]$. Note in this case that $u\in R\cap U(T)=U(R)$ and $ut\in R$. Then letting $ut=r\in R$, $t=u^{-1}r\in R$, so $T\subseteq R$. Then if $R[x]$ is an associated subring of $T[x]$, it must be the case that $R=T$; the reverse implication is obvious.
		
		Now assume that $\quot{U(T)}{U(R)}$ is a finite quotient group and $R[x]$ is an associated subring of $T[x]$. Denote by $\{u_i\}_{i=0}^k\subseteq U(T)$ a list of coset representatives of $\quot{U(T)}{U(R)}$, one from each coset. Then for any $t\in T$, there must exist some $u\in U(T[x])=U(T)$ such that $u(u_0^{-1}t+u_1^{-1}tx+u_2^{-1}tx^2+\dots+u_k^{-1}tx^k)\in R[x]$, i.e. $uu_i^{-1}t\in R$ for each $0\leq i\leq k$. Since each coset of $\quot{U(T)}{U(R)}$ can be represented by one of the $u_i$, there must be some $0\leq i\leq k$ such that $uu_i^{-1}\in R$. Then letting $uu_i^{-1}t=r\in R$, we have $t=u^{-1}u_ir\in R$, so $T\subseteq R$. Again, if $R[x]$ is an associated subring of $T[x]$, it must be the case that $R=T$; the reverse implication is again obvious.
	\end{proof}
	
	One will note that the problem with polynomial ring extensions inheriting the associated subring property is that the unit group is still limited to units in the base ring. For rings of formal power series, the unit group is much larger, allowing the associated subring property to be much more easily inherited. The following theorem from \cite{radicalconductor} shows one such case in which inheritance of the associated subring condition is observed; the example afterward shows that this inheritance for associated and ideal-preserving subrings is not observed in general.
	
	\begin{theorem}
		Let $R$ be an associated order in a number field $K$ with radical conductor ideal $I$. Then $R[[x]]$ is an associated subring of $\Rbar[[x]]$.
	\end{theorem}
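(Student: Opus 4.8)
The plan is to reduce the problem modulo the conductor, where radicality makes everything a product of fields, solve the resulting problem one field at a time (exploiting that power series over a field have many units), and finally repair the constant term using local associatedness so that the correcting factor is a genuine unit of $\Rbar[[x]]$.

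First I would record the two structural facts flowing from the hypothesis. Since $R$ is associated it is ideal-preserving by Theorem~\ref{ass implies ip}, and since $\Rbar$ is integral over $R$ it is locally associated by Theorem~\ref{ass implies la}. Writing $I = P_1\cdots P_k$ (distinct primes, as $I$ is radical in the Dedekind domain $\Rbar$), Theorem~\ref{ip equiv conditions}, condition~(5), gives $R/I \cong \prod_{i=1}^k \ell_i$ compatibly with the Chinese Remainder decomposition $\Rbar/I \cong \prod_{i=1}^k k_i$, where $k_i = \Rbar/P_i$ and $\ell_i = (R+P_i)/P_i$ is a subring of $k_i$. I will also use, from the proof of Theorem~\ref{la ps and poly}, that $I[[x]] = (R[[x]] : \Rbar[[x]])$ and that $U(S[[x]]) = U(S) + xS[[x]]$ for any commutative ring $S$ with identity.

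Next, the reduction. It suffices to produce $w \in U(\Rbar[[x]])$ with $wf \in R[[x]]$, since then $f = w^{-1}(wf) \in R[[x]] \cdot U(\Rbar[[x]])$. Because $I \subseteq R$, an element of $\Rbar$ lies in $R$ if and only if its class mod $I$ lies in $R/I$; applying this coefficientwise, the condition $wf \in R[[x]]$ is equivalent to $\bar w\,\bar f \in (R/I)[[x]]$ inside $(\Rbar/I)[[x]]$. Under the product decomposition this uncouples into components: writing $\bar f = (\phi_i)_i$ with $\phi_i \in k_i[[x]]$, I want $\psi_i \in k_i[[x]]$ with $\psi_i\phi_i \in \ell_i[[x]]$ for each $i$. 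This is where radicality pays off: each $k_i$ is a field, so if $\phi_i \neq 0$ I may write $\phi_i = x^{m_i}\eta_i$ with $\eta_i \in U(k_i[[x]])$ and take $\psi_i = \eta_i^{-1}$, giving $\psi_i\phi_i = x^{m_i} \in \ell_i[[x]]$ (and $\psi_i = 1$ when $\phi_i = 0$). Assembling the $\psi_i$ into $\bar w \in (\Rbar/I)[[x]]$ produces $\bar w\,\bar f \in (R/I)[[x]]$ with $\bar w_0 \in U(\Rbar/I)$, since each $\psi_i$ has nonzero constant term.

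The main obstacle is the last step: $\bar w_0$ is a unit mod $I$, but I need an honest unit of $\Rbar$ as the constant term of $w$, and a unit of $\Rbar$ is far more restrictive than a unit mod $I$. Here local associatedness is exactly what is needed. By Lemma~\ref{exact sequence} and Theorem~\ref{la equiv conditions}, local associatedness forces $\Cl(R) \cong \Cl(\Rbar)$, hence the map $U(\Rbar) \times U(R/I) \to U(\Rbar/I)$ in the exact sequence is surjective; equivalently $U(\Rbar/I) = \im(U(\Rbar)) \cdot U(R/I)$. Thus I can choose $\rho \in U(R/I)$ with $\bar w_0\rho \in \im(U(\Rbar))$, say $\bar w_0\rho = \bar u$ with $u \in U(\Rbar)$. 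Replacing $\bar w$ by $\rho\bar w$ preserves $\rho\bar w\,\bar f \in (R/I)[[x]]$ because $\rho \in R/I$, while making the constant term equal to $\bar u$. Finally I lift $\rho\bar w$ to $w \in \Rbar[[x]]$ by setting its constant term to $u \in U(\Rbar)$ and choosing arbitrary lifts of the remaining coefficients; then $w \in U(\Rbar[[x]])$ and $wf \in R[[x]]$, completing the argument. As a consistency check, radicality is essential (without it the $k_i$ become non-reduced local rings and the factorization $\phi_i = x^{m_i}\eta_i$ with $\eta_i$ a unit can fail), which also explains the contrast with the polynomial analogue in Theorem~\ref{poly not associated}, where $U(\Rbar[x]) = U(\Rbar)$ is too small to clear the $x^{m_i}$.
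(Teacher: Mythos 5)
Your proof is correct, and there is actually nothing in the paper to compare it against: the statement is imported from \cite{radicalconductor} and stated without proof, so your argument stands as a self-contained derivation from the paper's internal results. Each step checks out. Associatedness yields ideal-preserving (Theorem \ref{ass implies ip}) and, by integrality, locally associated (Theorem \ref{ass implies la}); radicality plus condition (5) of Theorem \ref{ip equiv conditions} gives the compatible splittings $\quot{\Rbar}{I}\cong\prod_i k_i$ and $\quot{R}{I}\cong\prod_i \ell_i$; over the fields $k_i$ the factorization $\phi_i=x^{m_i}\eta_i$ with $\eta_i$ a unit is valid; and your constant-term repair is legitimate, since $\abs{\Cl(R)}=\abs{\Cl(\Rbar)}$ forces the surjection $\Cl(R)\to\Cl(\Rbar)$ in Lemma \ref{exact sequence} to be injective, whence exactness gives $U(\quot{\Rbar}{I})=\im(U(\Rbar))\cdot U(\quot{R}{I})$. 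Two remarks. First, the class-group detour is unnecessary: condition (2) of Theorem \ref{la equiv conditions} (equivalently, Proposition \ref{la first conditions}) is literally the identity you need---every coset of $U(\quot{R}{I})$ in $U(\quot{\Rbar}{I})$ contains the image of a unit of $\Rbar$---so local associatedness can be applied directly without mentioning $\Cl(R)$ at all. Second, it is worth making explicit that the surjectivity in condition (5) carries real weight in your uncoupling step: the series $\bar w\bar f$ corresponds to the tuple $(x^{m_1},\dots,x^{m_k})$, whose coefficients are idempotent-like tuples such as $(1,0,\dots,0)$, and realizing such tuples as reductions of elements of $R$ is precisely what the ideal-preserving hypothesis purchases; for a general order the diagonal image of $\quot{R}{I}$ in $\prod_i\ell_i$ is a proper subring and the componentwise construction would produce a series outside $(\quot{R}{I})[[x]]$. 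You do invoke condition (5), so the argument is complete, but a reader should see where it bites. Your closing observation about why radicality is essential (each $k_i$ must be a field, or the factorization $\phi_i=x^{m_i}\eta_i$ fails) is likewise correct and well placed.
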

	
	\begin{example}
		Let $K=\bQ[\alpha]$, where $\alpha\in\bC$ is a root of $f(x)=x^3+4x-1$. In \cite{radicalconductor}, it was shown that the ring $R=\bZ+9\bZ+(2-4\alpha+\alpha^2)\bZ$ is an order in $K$ which is an associated subring of $\Rbar=\bZ[\alpha]$. Then $R$ is both an ideal-preserving and locally associated subring of $\Rbar$. By Theorem \ref{la ps and poly}, this tells us that $R[[x]]$ is a locally associated subring of $\Rbar[[x]]$ and $R[x]$ is a locally associated subring of $\Rbar[x]$. However, \cite{radicalconductor} also showed that $R[[x]]$ is not an associated subring of $\Rbar[[x]]$, and Theorem \ref{poly not associated} shows that $R[x]$ is not an associated subring of $\Rbar[[x]]$.
		
		Now consider the ideal $J=(3+\alpha x)\subseteq \Rbar[x]$, and let $f=a_0+a_1x+\dots+a_nx^n\in \Rbar[x]$ such that $(3+\alpha x)f\in R[x]$ (i.e. $(3+\alpha x)f$ is an arbitrary element of $J\cap R[x]$). Looking at the constant and linear terms of this product, $3a_0\in R$ and $3a_1+\alpha a_0\in R$. Let $a_0=x_0+y_0\alpha+z_0(2-4\alpha+\alpha^2)$ and $a_1=x_1+y_1\alpha+z_1(2-4\alpha+\alpha^2)$. Since $3a_0\in R$, then $3|y_0$. Furthermore, $3a_1+\alpha a_0=(3x_1-2y_0+9z_0)+(3y_1+x_0+4y_0-18z_0)\alpha+(3z_1+y_0-4z_0)(2-4\alpha+\alpha^2)\in R$. Since $3|y_0$, then $3|x_0$. Then $a_0\in 3\bZ+3\alpha\bZ+(2-4\alpha+\alpha^2)\bZ\subseteq (3,2+2\alpha+\alpha^2)$. Letting $P=(3,2+2\alpha+\alpha^2)$, this tells us that $(3+\alpha x)f$ has a constant term lying in the ideal $3P$, with $3\notin 3P$. Then $J\nsubseteq 3P+x\Rbar[x]$, but $J\cap R[x]\subseteq 3P+x\Rbar[x]$, meaning that $R[x]$ is not an ideal-preserving subring of $\Rbar[x]$. The same argument also tells us that $R[[x]]$ is not an ideal-preserving subring of $\Rbar[[x]]$.
	\end{example}
	
	In addition to these results, we can often determine a great deal of information about intermediate subrings; that is, for a ring $T$ with subring $R\subseteq T$, we can often find properties of a ring $S$ such that $R\subseteq S\subseteq T$. This will in particular be very helpful when examining orders in a number field.
	
	\begin{theorem}
		\label{ip intermediate cond}
		Let $T$ be a Dedekind domain, $R$ an ideal-preserving subring of $T$ with identity such that $I=(R:T)\neq \{0\}$. Then for any $T$-ideal $J$, $(R+J:T)=I+J$. In particular, if $I\subseteq J$, then $(R+J:T)=J$.
	\end{theorem}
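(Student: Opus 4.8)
The plan is to prove $(R+J:T)=I+J$ by splitting into two inclusions, after recording two structural facts. First, $R+J$ is a subring: the product of $r_1+j_1$ and $r_2+j_2$ lies in $R+J$ because $J$ is a $T$-ideal. Second, both $(R+J:T)$ and $I+J$ are genuine \emph{$T$-ideals}; indeed, for $x$ in the conductor and $s\in T$ one has $(sx)T=x(sT)\subseteq xT\subseteq R+J$, so $sx\in(R+J:T)$. Since $T$ is Dedekind, two $T$-ideals coincide iff their $P$-adic valuations agree at every prime, so it suffices to prove $v_P\big((R+J:T)\big)=\min\big(v_P(I),v_P(J)\big)$ for each $P$. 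The inclusion $I+J\subseteq(R+J:T)$ is immediate, since $IT=I\subseteq R$ and $JT=J$ give $(I+J)T\subseteq R+J$; this already yields $v_P\big((R+J:T)\big)\le\min\big(v_P(I),v_P(J)\big)$, and the entire content is the reverse inequality.

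For the reverse inequality I would argue prime by prime by localizing at $P$. If $P\nmid I$, then $I\not\subseteq P$, and any $y\in I\setminus P$ satisfies $yT\subseteq R$ while being a unit in $T_P$; this forces $R$ and $T$ to have the same localization at $\mathfrak p=R\cap P$, so $R+J$ is locally all of $T_P$ and $v_P\big((R+J:T)\big)=0=\min\big(0,v_P(J)\big)$. Hence only primes $P\mid I$ matter, and there the claim reduces to the following statement in the discrete valuation ring $T_P$ with maximal ideal $\mathfrak m=PT_P$, writing $a:=v_P(I)\ge 1$ and $b:=v_P(J)$ (the passage of the conductor to $T_P$ being legitimate in the intended setting, e.g. when $T$ is module-finite over $R$ as for orders):
\[
(R_P:T_P)=\mathfrak m^{\,a},\qquad \big(R_P+\mathfrak m^{\,b}:T_P\big)=\mathfrak m^{\,\min(a,b)}.
\]

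The local computation has two cases, and the second is where I expect the main obstacle. When $b\ge a$, one has $\mathfrak m^{\,b}\subseteq\mathfrak m^{\,a}=(R_P:T_P)\subseteq R_P$, so $R_P+\mathfrak m^{\,b}=R_P$ and the conductor is $\mathfrak m^{\,a}=\mathfrak m^{\,\min(a,b)}$. The subcase $b<a$ is the heart of the matter: I must show that no element of valuation $c<b$ lies in the conductor, i.e. that $\mathfrak m^{\,c}\not\subseteq R_P+\mathfrak m^{\,b}$ whenever $c<b<a$. This is exactly where ideal-preservation is indispensable: Condition (3) of Theorem \ref{ip equiv conditions} supplies $\alpha\in R\cap P\setminus P^2$, i.e. a uniformizer $\pi=\alpha\in R_P$. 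The key mechanism is then a propagation argument: if $\mathfrak m^{\,j}\subseteq R_P+\mathfrak m^{\,j+1}$ for some $j<a$ (that is, $R_P$ represents every residue of $\kappa=T_P/\mathfrak m$ at level $j$), then multiplying by $\pi\in R_P$ propagates this to all higher levels, and a Nakayama argument forces $\mathfrak m^{\,j}\subseteq R_P$, contradicting $v_P(I)=a>j$. Thus $R_P$ represents all residues at \emph{no} level below $a$; but $\mathfrak m^{\,c}\subseteq R_P+\mathfrak m^{\,b}$ would require full residue representation at the level $c<b<a$, which is impossible, so the conductor is exactly $\mathfrak m^{\,b}=\mathfrak m^{\,\min(a,b)}$.

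Assembling the local values gives $(R+J:T)=\prod_P P^{\min(v_P(I),v_P(J))}=I+J$. That ideal-preservation cannot be dropped is visible in the cautionary example $T=\bZ[i]$, $R=\bZ[2i]$, which fails to be ideal-preserving because $R\cap P\subseteq P^2$ for $P=(1+i)$: here $I=(2)=P^2$, and taking $J=P$ gives $R+J=T$, so $(R+J:T)=T\neq P=I+J$, precisely the collapse of the $b<a$ subcase when $R_P$ contains no uniformizer. Finally, the ``In particular'' assertion follows at once: if $I\subseteq J$ then $v_P(J)\le v_P(I)$ for every $P$, so $\min\big(v_P(I),v_P(J)\big)=v_P(J)$, giving $I+J=J$ and hence $(R+J:T)=J$.
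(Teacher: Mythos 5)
Your route is genuinely different from the paper's (localize at each prime, compute the conductor of $R_P+\mathfrak{m}^b$ in the DVR $T_P$, then reassemble), but it has a real gap: the entire local computation rests on the identity $(R_P:T_P)=\mathfrak{m}^a$ with $a=v_P(I)$, i.e.\ on conductors commuting with localization, and you yourself flag that this needs something like module-finiteness of $T$ over $R$. The theorem carries no such hypothesis, and the paper invokes it where none is available: Lemma \ref{Lemma 2} assumes nothing beyond the theorem's own hypotheses, and Theorem \ref{ass iff ip and la} assumes only integrality, which together with a nonzero conductor still does not force module-finiteness. The gap is not vacuous. Take $R=\bZ+x\bQ[x]\subseteq T=\bQ[x]$: this satisfies every hypothesis (it is an ideal-preserving subring with identity, as one checks with Condition (3) of Theorem \ref{ip equiv conditions}, and $I=(x)\neq\{0\}$), yet every nonzero integer lies in $R\setminus\mathfrak{p}$ for $\mathfrak{p}=R\cap(x)$, so $R_{\mathfrak{p}}=T_{(x)}$ and the local conductor $(R_{\mathfrak{p}}:T_{(x)})$ is the unit ideal while $IT_{(x)}=\mathfrak{m}$. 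Your method would then output $v_{(x)}\big((R+J:T)\big)=0$ for every $J$, which is false: taking $J=(x^2)$ gives $R+J=R$, whose conductor is $(x)=I+J$. The same finiteness issue infects the Nakayama step, which needs $(R_P+\mathfrak{m}^{j})/R_P$ to be a finitely generated module over a Noetherian $R_P$; without finiteness, $R_P$ need not even be closed in the $\mathfrak{m}$-adic topology. So as written, your argument proves the theorem only with an added finiteness hypothesis---enough for orders in number fields, but not for the statement as given nor for all of its uses in the paper.

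The paper's proof avoids localization entirely, and the contrast shows how to repair yours. After reducing to the case $I\subseteq J$ and supposing $A:=(R+J:T)\supsetneq J$, it writes $A=P_1^{a_1}\cdots P_k^{a_k}$, $J=P_1^{b_1}\cdots P_k^{b_k}$, $I=P_1^{c_1}\cdots P_k^{c_k}$ with (say) $a_1<b_1\leq c_1$, and uses ideal-preservation to choose $\alpha\in R\cap A\setminus P_1^{a_1+1}$, $\beta_1\in R\cap P_1\setminus P_1^2$, and $\beta_j\in R\cap P_j\setminus P_1$ for $j\geq 2$. Setting $\beta=\beta_1^{c_1-b_1}\cdots\beta_k^{c_k-b_k}\in R$, an elementwise check gives $\alpha\beta T\subseteq R$, i.e.\ $\alpha\beta\in I$, while $v_{P_1}(\alpha\beta)=a_1+c_1-b_1<c_1$, a contradiction; nothing beyond $T$ being Dedekind is used. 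This global element construction plays exactly the role of your uniformizer-propagation argument, but survives the failure of localization. Your sound observations---that the content is the inequality $v_P\big((R+J:T)\big)\geq\min\big(v_P(I),v_P(J)\big)$, and that ideal-preservation is indispensable (your $\bZ[2i]\subseteq\bZ[i]$ example is apt)---carry over; to fix the proof, either state the finiteness hypothesis and accept a weaker theorem, or replace the localization by a global choice of elements as the paper does.
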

	
	\begin{proof}
		First, note that $R+J$ is a ring lying between $R$ and $T$, i.e. $R\subseteq R+J\subseteq T$. Furthermore, $I+J$ is an ideal containing $I$ with $R+(I+J)=R+J$. Then it will suffice to show the desired result for ideals containing $I$. Also note that if $J=\{0\}$, then the result holds trivially; thus, we will assume moving forward that $J$ is a nonzero ideal.
		
		Note that $J\subseteq R+J$. Then letting $A=(R+J:T)$, it must be the case that $J\subseteq A$; suppose that $J\subsetneq A$. In this case, $R+J\subseteq R+A\subseteq R+(R+J)=R+J$, so $R+J=R+A$. Since $T$ is a Dedekind domain, we can now factor $J$ and $A$ into prime $T$-ideals, $A=P_1^{a_1}\dots P_k^{a_k}$, $J=P_1^{b_1}\dots P_k^{b_k}$, and $I=P_1^{c_1}\dots P_k^{c_k}$. Since $I\subseteq J\subsetneq A$, then $a_j\leq b_j\leq c_j$ for each $1\leq j\leq k$ and there is at least one $i$ such that $a_i<b_i$. Without loss of generality, assume $a_1<b_1$.
		
		Since $R$ is an ideal-preserving subring of $T$, we can now construct elements as follows: let $\alpha\in R\cap A\backslash P_1^{a_1+1}$; let $\beta_1\in R\cap P_1 \backslash P_1^2$; and for $2\leq j\leq k$, let $\beta_j\in R\cap P_j\backslash P_1$. Finally, let $\beta=\beta_1^{c_1-b_1}\dots\beta_k^{c_k-b_k}$. Since $\alpha\in A$, $\alpha T\subseteq R+J$; then for any $t\in T$, there is some $r\in R$ and $\gamma\in J$ such that $\alpha t=r+\gamma$. Therefore, $(\alpha\beta)t=r\beta+\gamma\beta$. Since $\beta\in R$, $r\beta\in R$; From the prime factors dividing $J$ and $\beta$, $\gamma\beta\in I\subseteq R$. Then for any $t\in T$, $(\alpha\beta)t\in R$. Then by definition, $\alpha\beta\in (R:T)=I$. However, the exact power of $P_i$ dividing $(\alpha\beta)$ is $P_i^{a_i+c_i-b_i}=P_i^{c_i-(b_i-a_i)}$. Since $b_i>a_i$, this means that $P_i^{c_i}\nmid (\alpha\beta)$, so $\alpha\beta\notin I$, a contradiction. Then it must be the case that $A=J$.
	\end{proof}
	
	This result allows us to use the fact that $R$ is an ideal-preserving subring of $T$ to give us information about $R+J$, an ring lying between $R$ and $T$. As we have seen previously, we also know that such $R+J$ will necessarily be an ideal-preserving subring of $T$. Then when $T$ is a Dedekind domain, an ideal-preserving subring $R$ creates, in a sense, a network of ideal-preserving subrings intermediate to the ring extension $R\subseteq T$. This network has several interesting and useful properties that we will explore, especially in the case when $R$ is an ideal-preserving order in a number field.
	
	The previous result gives us a way to move from a smaller ring to a larger ring within this network; the next will show what happens when we take the intersection of two rings in this network.
	
	\begin{theorem}
		\label{ip intersect cond ideals}
		Let $T$ be a Dedekind domain and $R$ an ideal-preserving subring of $T$ with identity such that $I=(R:T)\neq \{0\}$. Let $J_1$ and $J_2$ be two $T$-ideals containing $I$ and define $R_1:=R+J_1$ and $R_2:=R+J_2$. Then $R_1\cap R_2$ is a subring of $T$ such that $R\subseteq R+J_1\cap J_2\subseteq R_1\cap R_2$ and $(R_1\cap R_2:T)=J_1\cap J_2$. Moreover, if $T$ is integral over $R$, then $R_1\cap R_2=R+J_1\cap J_2$.
	\end{theorem}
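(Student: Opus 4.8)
The plan is to dispatch the three structural assertions first and then concentrate on the integral case, which is where the real content lies. That $R_1\cap R_2$ is a subring is immediate: each $R_i=R+J_i$ is a subring of $T$ with identity (it is closed under multiplication because $R$ is a subring, $J_i$ is a $T$-ideal, and $1\in R$), and an intersection of subrings is a subring. The chain $R\subseteq R+J_1\cap J_2\subseteq R_1\cap R_2$ is also routine: the first inclusion holds since $0\in J_1\cap J_2$, and the second because any $r+j$ with $j\in J_1\cap J_2$ lies in $R+J_1=R_1$ and in $R+J_2=R_2$ simultaneously.

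For the conductor identity $(R_1\cap R_2:T)=J_1\cap J_2$ I would argue by double inclusion, leaning on Theorem \ref{ip intermediate cond}. For ``$\supseteq$'', any $a\in J_1\cap J_2$ satisfies $aT\subseteq J_1\cap J_2\subseteq R_1\cap R_2$, so $a$ lies in the conductor. For ``$\subseteq$'', if $a\in(R_1\cap R_2:T)$ then $aT\subseteq R_1$, so $a\in(R_1:T)$; since $R$ is ideal-preserving and $I\subseteq J_1$, Theorem \ref{ip intermediate cond} gives $(R_1:T)=J_1$, and symmetrically $(R_2:T)=J_2$, whence $a\in J_1\cap J_2$.

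It remains to prove $R_1\cap R_2=R+J_1\cap J_2$ when $T$ is integral over $R$; by the chain above I only need $R_1\cap R_2\subseteq R+J_1\cap J_2$. The plan is to take $x\in R_1\cap R_2$, write $x=r_1+j_1=r_2+j_2$ with $r_i\in R$ and $j_i\in J_i$, and look for $r\in R$ with $x-r\in J_1\cap J_2$; any such $r$ exhibits $x\in R+J_1\cap J_2$. Since $x-r\in J_1$ forces $r\equiv r_1\pmod{J_1}$ and $x-r\in J_2$ forces $r\equiv r_2\pmod{J_2}$, this is a Chinese-remainder problem inside $R$ for the ideals $R\cap J_1$ and $R\cap J_2$, solvable precisely when the compatibility condition $r_1-r_2\in(R\cap J_1)+(R\cap J_2)$ holds. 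I already know $r_1-r_2=j_2-j_1\in R\cap(J_1+J_2)$, so everything reduces to the single identity
$$R\cap(J_1+J_2)=(R\cap J_1)+(R\cap J_2).$$

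This identity is the main obstacle, and I expect to prove it by passing to $R/I$ and invoking the decomposition in Theorem \ref{ip equiv conditions}(5). Writing $I=\prod P_i^{c_i}$, $J_1=\prod P_i^{a_i}$, $J_2=\prod P_i^{b_i}$ (all exponents at most $c_i$, since $I\subseteq J_1,J_2$), the isomorphism $R/I\cong\prod_i R/(R\cap P_i^{c_i})$ carries each ideal $R\cap J$ with $J\supseteq I$ to the product of its local pieces $(R\cap P_i^{e_i})/(R\cap P_i^{c_i})$; verifying this matching is the one technical point, and it uses that distinct $P_i$ are comaximal, so membership in $\prod P_i^{e_i}=\bigcap P_i^{e_i}$ is detected prime by prime. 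Under this correspondence $R\cap J_1$, $R\cap J_2$, and $R\cap(J_1+J_2)$ become $\prod_i A_i$, $\prod_i B_i$, and $\prod_i(A_i+B_i)$ respectively, where at each prime the exponents $a_i,b_i$ are comparable, so $A_i$ and $B_i$ are nested and $A_i+B_i$ is the larger of the two (corresponding to the exponent $\min(a_i,b_i)$ appearing in $J_1+J_2$). Since sums of submodules of a finite product are computed componentwise, $\prod_i A_i+\prod_i B_i=\prod_i(A_i+B_i)$, giving the identity. Writing $r_1-r_2=e_1+e_2$ with $e_1\in R\cap J_1$ and $e_2\in R\cap J_2$, the element $r:=r_1-e_1=r_2+e_2\in R$ then satisfies $x-r=j_1+e_1\in J_1$ and $x-r=j_2-e_2\in J_2$, so $x-r\in J_1\cap J_2$ and the reverse inclusion follows.
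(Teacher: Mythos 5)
Your proof is correct, and on the substantive point (the equality $R_1\cap R_2=R+J_1\cap J_2$ under integrality) it takes a genuinely different route from the paper. The preliminary assertions are handled the same way in both: the subring and chain-of-inclusion claims are routine, and the conductor computation $(R_1\cap R_2:T)=J_1\cap J_2$ goes through Theorem \ref{ip intermediate cond} exactly as in the paper. For the integral case, the paper first reduces to the situation $J_1\cap J_2=I$ (replacing $R$ by $R+J_1\cap J_2$, which inherits ideal-preservation, integrality, and has conductor $J_1\cap J_2$), then treats coprime $J_1,J_2$ by assembling the two-factor isomorphism $\quot{R}{I}\cong\quot{R_1}{J_1}\times\quot{R_2}{J_2}$ from applications of Condition 5 of Theorem \ref{ip equiv conditions} to $R$, $R_1$, and $R_2$, and finally handles non-coprime $J_1,J_2$ by squeezing $R_1\cap R_2$ inside $\bigcap_i(R+P_i^{a_i})$ and iterating the coprime case over the prime divisors of $I$. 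You instead recast the problem as a coset-intersection (Chinese remainder) problem in $R$ for the ideals $R\cap J_1$ and $R\cap J_2$, observe that solvability is exactly the condition $r_1-r_2\in(R\cap J_1)+(R\cap J_2)$, and then prove the stronger distributivity identity $R\cap(J_1+J_2)=(R\cap J_1)+(R\cap J_2)$ by a single application of the $k$-factor decomposition of Condition 5, tracking how ideals containing $I$ decompose componentwise. Your approach buys uniformity: no reduction step, no coprimality case split, and it isolates a reusable fact (intersection with $R$ distributes over sums of $T$-ideals containing $I$) of independent interest. The paper's approach buys economy of verification: it only ever applies the isomorphism of Condition 5 to elements, so it never needs your one technical lemma --- that the natural map carries $(R\cap J)/I$ onto $\prod_i(R\cap P_i^{e_i})/(R\cap P_i^{c_i})$ for $J=\prod_i P_i^{e_i}\supseteq I$ --- though that lemma is correct and you prove it soundly (surjectivity of the map together with the observation that $r\equiv s_i$ modulo $R\cap P_i^{c_i}$ with $s_i\in P_i^{e_i}$ forces $r\in P_i^{e_i}$, since distinct primes are comaximal and products of prime powers equal intersections).
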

	
	\begin{proof}
		First, note that $R$ and $J_1\cap J_2$ are trivially contained in $R_1\cap R_2$ by the definition of $R_1$ and $R_2$, so $R\subseteq R+J_1\cap J_2\subseteq R_1\cap R_2$. 
		
		To show that $(R_1\cap R_2:T)=J_1\cap J_2$, note as above that $J_1\cap J_2\subseteq R_1\cap R_2$. Then $J_1\cap J_2\subseteq (R_1\cap R_2:T)$. For the reverse inclusion, let $\alpha\in (R_1\cap R_2:T)$; in other words, $\alpha T\subseteq R_1\cap R_2$. Since $\alpha T\subseteq R_1$, then $\alpha \in J_1=(R_1:T)$; similarly, $\alpha T\subseteq R_2$ so $\alpha\in J_1=(R_2:T)$. Then $\alpha\in J_1\cap J_2$, so $(R_1\cap R_2:T)=J_1\cap J_2$.
		
		Now assume that $T$ is integral over $R$. We want to show that $R_1\cap R_2=R+J_1\cap J_2$. Since we have already shown that $R+J_1\cap J_2\subseteq R_1\cap R_2$ holds more generally, we only need to show the reverse inclusion. To start, note that $R+J_1\cap J_2$ is a subring of $T$ containing 1 with conductor ideal $(R+J_1\cap J_2:T)=J_1\cap J_2$, $T$ is integral over $R+J_1\cap J_2$, $R_1=R+J_1=(R+J_1\cap J_2)+J_1$, and $R_2=R+J_2=(R+J_1\cap J_2)+J_2$. Then it will suffice to show that $R_1\cap R_2=R$ in the case that $J_1\cap J_2=I$.
		
		Assume that $J_1\cap J_2=I$; for now, we will also assume that $J_1$ and $J_2$ are relatively prime. By applying Condition 5 in Theorem \ref{ip equiv conditions} first to $\quot{R}{I}$ and then to $\quot{R_1}{J_1}$ and $\quot{R_2}{J_2}$, we will get that $\quot{R}{I}\cong \quot{R_1}{J_1}\times \quot{R_2}{J_2}$. Then let $t\in R_1\cap R_2$. By the isomorphism produced in the proof of Theorem \ref{ip equiv conditions}, there must be some $r\in R$ such that $(r+J_1,r+J_2)=(t+J_1,t+J_2)$. Then $t-r\in J_1\cap J_2=I$, so $t=r+\beta$ for some $\beta\in I$ and thus $t\in R$. Then when $J_1$ and $J_2$ are relatively prime, $R_1\cap R_2=R$.
		
		Now to show that this holds even when $J_1$ and $J_2$ are not relatively prime, write $I=P_1^{a_1}\dots P_k^{a_k}$, the factorization of $I$ into prime $T$-ideals. Since $J_1\cap J_2=I$, then it must be the case that $J_1=P_1^{b_1}\dots P_k^{b_k}$ and $J_2=P_1^{c_1}\dots P_k^{c_k}$ with $b_i,c_i\in\bN_0$ and $\max\{b_i,c_i\}=a_i$ for each $1\leq i\leq n$. Since $R_1=R+J_1\subseteq R+P_i^{b_i}$ and $R_2=R+J_2\subseteq R+P_i^{c_i}$ for each $1\leq i\leq k$, then $R_1\cap R_2\subseteq \bigcap_{i=1}^n(R+P_i^{a_i})$. Applying the relatively prime case $n$ times then gives us $R_1\cap R_2\subseteq \bigcap_{i=1}^n(R+P_i^{a_i})=R+I=R$, completing the proof.
	\end{proof}
	
	Note in this theorem that, although we are intersecting two larger rings $R_1$ and $R_2$ and examining the resulting smaller ring, we are still depending on the fact that $R_1$ and $R_2$ were produced in a very particular manner from a common subring $R$. The following shows how we might carry out this same process without starting from a common subring $R$ (even if the subrings in question are not ideal-preserving).
	
	\begin{theorem}
		Let $R_1$ and $R_2$ be two subrings of the same commutative ring with identity $T$, and let $J_1:=(R_1:T)$ and $J_2:=(R_2:T)$. Then $R:=R_1\cap R_2$ is a subring of $T$ with conductor ideal $I:=(R:T)=J_1\cap J_2$. Moreover, if $T$ is a Dedekind domain, $1\in R$, $T$ is integral over $R$, and $J_1$ and $J_2$ are relatively prime as $T$-ideals, then $R_1=R+J_1$, $R_2=R+J_2$, and $$\quot{R}{I}\cong\quot{R_1}{J_1}\times \quot{R_2}{J_2}.$$ 
	\end{theorem}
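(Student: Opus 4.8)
The plan is to dispatch the two structural claims first and then construct the isomorphism by an explicit Chinese-Remainder-style lift, the one genuine difficulty being to force the lifted element into the intersection $R=R_1\cap R_2$ rather than merely into $T$. First I would record the easy facts. An intersection of subrings is a subring, so $R=R_1\cap R_2$ is a subring of $T$. For the conductor I would argue exactly as in the proof of Theorem \ref{ip intersect cond ideals}: if $\alpha T\subseteq R$ then $\alpha T\subseteq R_i$, so $\alpha\in J_i$ for $i=1,2$, giving $(R:T)\subseteq J_1\cap J_2$; conversely $\alpha\in J_1\cap J_2$ yields $\alpha T\subseteq R_1\cap R_2=R$, so $\alpha\in(R:T)$. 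Thus $I=J_1\cap J_2$. I would also note two containments used throughout: since $1\in T$, any $\alpha\in J_i=(R_i:T)$ satisfies $\alpha=\alpha\cdot 1\in R_i$, so $J_i\subseteq R_i$, and likewise $I\subseteq R$.

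Next I would prove $R_1=R+J_1$, with $R_2=R+J_2$ following by symmetry. The inclusion $R+J_1\subseteq R_1$ is immediate from $R\subseteq R_1$ and $J_1\subseteq R_1$. For the reverse, relative primality furnishes $1=e_1+e_2$ with $e_1\in J_1$ and $e_2\in J_2$. Given $t\in R_1$, I would write $t=te_1+te_2$, where $te_1\in J_1$ since $J_1$ is a $T$-ideal. The key point is that $te_2\in R$: indeed $te_2\in R_2$ because $e_2\in(R_2:T)$ forces $e_2T\subseteq R_2$, while $te_2=t-te_1\in R_1$ because $t\in R_1$ and $te_1\in J_1\subseteq R_1$; hence $te_2\in R_1\cap R_2=R$ and $t\in R+J_1$.

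Finally, for the isomorphism I would introduce the ring homomorphism
$$\psi:\quot{R}{I}\to \quot{R_1}{J_1}\times\quot{R_2}{J_2},\qquad \psi(r+I)=(r+J_1,\,r+J_2),$$
which is well defined because $r\in R\subseteq R_i$ and $I\subseteq J_i$, and which sends $1+I$ to the identity since $1\in R$. Its kernel consists of those $r+I$ with $r\in J_1$ and $r\in J_2$, i.e.\ $r\in J_1\cap J_2=I$, so $\psi$ is injective. Surjectivity is where the real work lies and is the step I expect to be the main obstacle: the naive lift of a pair $(a+J_1,b+J_2)$ produces only an element of $T$, and I must arrange it to land in $R$. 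I would take $r:=ae_2+be_1$ for $a\in R_1$, $b\in R_2$. Reducing modulo $J_1$ kills $be_1$ and leaves $ae_2=a(1-e_1)\equiv a\pmod{J_1}$, and symmetrically $r\equiv b\pmod{J_2}$, so $\psi(r+I)=(a+J_1,b+J_2)$. The crucial verification is $r\in R$: since $e_2,e_1$ lie in the respective conductors, $ae_2\in R_2$ and $be_1\in R_1$, while $ae_2=a-ae_1\in R_1$ and $be_1=b-be_2\in R_2$ using $a\in R_1$, $b\in R_2$ and $ae_1\in J_1$, $be_2\in J_2$; thus each of $ae_2,be_1$ lies in $R_1\cap R_2=R$, whence $r\in R$. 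This establishes surjectivity and the isomorphism.

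I would remark that this direct idempotent argument invokes only the splitting $1=e_1+e_2$ from relative primality together with $1\in R$; the Dedekind and integrality hypotheses are what one needs if instead the isomorphism is routed through Condition 5 of Theorem \ref{ip equiv conditions} and the intersection formula of Theorem \ref{ip intersect cond ideals}, which is the natural alternative but heavier machinery than the explicit lift requires.
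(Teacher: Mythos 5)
Your proof is correct and takes essentially the same route as the paper: the same diagonal map $r+I\mapsto(r+J_1,\,r+J_2)$, the same kernel computation $\ker = (J_1\cap J_2)/I$ giving injectivity, and a Chinese-Remainder-style surjectivity argument whose key step --- verifying that the lift lands in $R=R_1\cap R_2$ --- is carried out exactly as in the paper, using $J_i\subseteq R_i$. The only differences are presentational: you inline the proof of CRT via the splitting $1=e_1+e_2$ and establish $R_i=R+J_i$ separately up front, whereas the paper invokes CRT as a black box and extracts $R_i=R+J_i$ as a byproduct of surjectivity (and your closing remark that the Dedekind and integrality hypotheses go unused applies equally to the paper's own argument).
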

	
	\begin{proof}
		Define $R=R_1\cap R_2$ and $I=(R:T)$. Then for any $\alpha\in I$, $\alpha T\subseteq R$. Then since $R\subseteq R_1$ and $R\subseteq R_2$, $\alpha T\subseteq R_1\cap R_2$, so $\alpha\in J_1\cap J_2$. Then $I\subseteq J_1\cap J_2$.
		
		Now let $\alpha\in J_1\cap J_2$, and let $t\in T$. Since $\alpha\in J_1$, we have $\alpha t\in R_1$. Similarly, since $\alpha\in J_2$, $\alpha t\in R_2$. Then $\alpha T\subseteq R_1\cap R_2=R$, so $\alpha\in I$. Thus, $J_1\cap J_2\subseteq I$, so $I=J_1\cap J_2$.
		
		Now assume that $T$ is a Dedekind domain, $1\in R$, $T$ is integral over $R$, and $J_1$ and $J_2$ are relatively prime as $T$-ideals. Consider the map $\phi:\quot{R}{I}\to \quot{R_1}{J_1}\times \quot{R_2}{J_2}$ defined by $\phi(r+I)=(r+J_1,r+J_2)$. Since $R=R_1\cap R_2$ and $I=J_1\cap J_2$, this is a well-defined homomorphism. Furthermore, $\phi(r+I)=0$ if and only if $r\in J_1\cap J_2=I$, so $\ker(\phi)$ is trivial and thus $\phi$ is injective. We need to show that $\phi$ is surjective as well. Let $r_1\in R_1$ and $r_2\in R_2$. By the Chinese Remainder Theorem applied to $\quot{T}{I}$, there exists $r\in T$ such that $r\equiv r_1\modulo{J_1}$ and $r\equiv r_2\modulo{J_2}$. Then for some $\beta_1\in J_1$ and $\beta_2\in J_2$, $r=r_1+\beta_1\in R_1$ and $r=r_2+\beta_2\in R_2$. Then $r\in R_1\cap R_2=R$, so $\phi(r+I)=(r_1+J_1,r_2+J_2)$. Then $\phi$ is onto, which gives the desired isomorphism. Note that this also tells us that $r_1=r-\beta_1\in R+J_1$ and $r_2=r-\beta_2\in R+J_2$, so $R_1\subseteq R+J_1$ and $R_2\subseteq R+J_2$. The reverse inclusions are immediate, so $R_1=R+J_1$ and $R_2=R+J_2$. 
	\end{proof}
	
	These theorems introduce the idea of intersecting two subrings to produce a smaller subring; a natural question, then, is whether any of our subring relationships are inherited by this smaller ring. As the next result shows, ideal-preserving subrings are ``nice" in the sense that the property is easily inherited in this way.
	
	\begin{theorem}
		Let $R_1$ and $R_2$ be two-ideal preserving subrings with identity of the same Dedekind domain $T$, and let $J_1=(R_1:T)$ and $J_2=(R_2:T)$ be relatively prime as $T$-ideals. Then $R=R_1\cap R_2$ is an ideal-preserving subring of $T$.
	\end{theorem}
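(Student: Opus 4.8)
The plan is to verify the ideal-theoretic criterion for ideal-preserving subrings, namely Condition (4) of Theorem \ref{ip equiv conditions}. Since $T$ is a Dedekind domain and, being subrings with identity of a domain, $R_1$ and $R_2$ share the identity $1$ of $T$ (so $1\in R=R_1\cap R_2$), this criterion is available. As established for the intersection of two subrings, $I:=(R:T)=J_1\cap J_2$, and since $J_1,J_2$ are comaximal we have $I=J_1\cap J_2=J_1J_2$; consequently every prime $T$-ideal containing $I$ lies over exactly one of $J_1,J_2$ (never both, as they are relatively prime). The key device will be a fixed pair $e_1\in J_1$, $e_2\in J_2$ with $e_1+e_2=1$: since $e_1=1-e_2$ with $1,e_2\in R_2$ and $e_1\in J_1\subseteq R_1$, we get $e_1\in R$, and symmetrically $e_2\in R$. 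Moreover, for any prime $P\supseteq J_1$ we have $e_1\in P$ while $e_2=1-e_1\equiv 1\pmod P$, so $e_2\notin P$ (a $P$-adic unit); the symmetric statement holds for primes over $J_2$.

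I would first check the self-containment clause, that $R\cap P\nsubseteq P^2$ for every prime $P\supseteq I$. Say $P\supseteq J_1$. Because $R_1$ is ideal-preserving with conductor $J_1$ and $P\supseteq J_1$, Condition (4) applied to $R_1$ furnishes $\alpha\in R_1\cap P$ with $\alpha\notin P^2$. I then set $\gamma:=\alpha e_2$: here $\gamma\in R_1$ (both factors lie in $R_1$), while $e_2\in J_2=(R_2:T)$ forces $\alpha e_2\in R_2$, so $\gamma\in R$. Finally $v_P(\gamma)=v_P(\alpha)+v_P(e_2)=1+0=1$, whence $\gamma\in P\setminus P^2$. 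The case $P\supseteq J_2$ is identical with the roles of the subscripts exchanged.

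Next I would verify that $R\cap P\nsubseteq P'$ whenever $P\neq P'$ are primes over $I$. If $P$ and $P'$ lie over different conductors, say $P\supseteq J_1$ and $P'\supseteq J_2$, then $e_1$ already does the job: $e_1\in J_1\subseteq P$ while $e_1\equiv 1\pmod{P'}$ gives $e_1\notin P'$, and we showed $e_1\in R$. If instead $P$ and $P'$ lie over the same conductor, say both over $J_1$, then ideal-preservation of $R_1$ (Condition (4), applicable since $P,P'\supseteq J_1$) yields $\alpha\in R_1\cap P\setminus P'$; multiplying by $e_2$ as above gives $\gamma=\alpha e_2\in R$ with $\gamma\in P$, and since $e_2\notin P'$ (as $P'\supseteq J_1$) we keep $\gamma\notin P'$. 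The remaining configurations are symmetric.

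The main obstacle is the tension between two incompatible demands: the witness for the ideal-preserving condition must lie in \emph{both} $R_1$ and $R_2$, yet its $P$-adic valuation must be controlled precisely. Neither subring alone can supply such an element, since $R_1$'s ideal-preserving property only produces members of $R_1$. The resolution is the multiplier $e_2\in J_2$: because $J_2$ is the conductor of $R_2$, multiplication by $e_2$ transports any element of $T$ into $R_2$, and because $e_2$ is a $P$-adic unit for primes $P$ over $J_1$, this transport leaves both the valuation at $P$ and the non-membership in a second prime $P'$ over $J_1$ untouched. Verifying that $e_1,e_2$ genuinely lie in $R$ and tracking valuations in the localizations $T_P$ are the only steps requiring care; everything else is bookkeeping over the finitely many primes dividing $J_1J_2$.
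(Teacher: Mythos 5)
Your proof is correct and follows essentially the same route as the paper's: both reduce to the prime-level criterion (Condition 4 of Theorem \ref{ip equiv conditions}), produce a witness $\alpha\in R_i\cap P$ (avoiding $P^2$ or a second prime) from the ideal-preservation of $R_1$ or $R_2$, and then multiply by an element of the opposite conductor that is a unit at the relevant primes so that the product lands in $R_1\cap R_2$. The only cosmetic difference is where the multiplier comes from: you fix a comaximality decomposition $e_1+e_2=1$ with $e_i\in J_i$ once and for all (which also lets $e_1$ itself settle the mixed case $P\supseteq J_1$, $P'\supseteq J_2$), whereas the paper selects $\beta\in R_i\cap J_j\setminus P$ by a second appeal to ideal-preservation; both devices play the identical role.
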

	
	\begin{proof}
		First, note by the previous theorem that $R$ is a subring of $T$ with conductor ideal $I=(R:T)=J_1\cap J_2=J_1J_2$. First, assume that $J_1=0$. Since $J_1+J_2=T$, then it must be the case that $J_2=R_2=T$. Then $R=R_1\cap R_2=R_1$, which is an ideal-preserving subring of $T$. By symmetry, the same argument holds if $J_2=0$. From here, we will assume that $J_1$ and $J_2$ are relatively prime nonzero $T$-ideals. Thus, we can factor $J_1$ and $J_2$ into products of prime $T$-ideals, and any prime dividing $J_1$ cannot divide $J_2$ (and vice versa). Condition 4 from Theorem \ref{ip equiv conditions} tells us that to show that $R$ is ideal-preserving, it will suffice to show that when restricting to $R$, the prime ideals dividing $I=J_1J_2$ remain distinct from one another and from their squares.
		
		Let $P$ be a prime $T$-ideal dividing $I$. Then $P$ must divide exactly one of $J_1$ or $J_2$; without loss of generality, suppose $P|J_1$. Then since $R_1$ is ideal-preserving, select $\alpha\in R_1\cap P\backslash P^2$ and $\beta\in R_1\cap J_2\backslash P$. Then $\alpha\beta\in R_1\cap J_2\subseteq R_1\cap R_2=R$, and $\alpha\beta\in P\backslash P^2$. Then $R\cap P\nsubseteq P^2$.
		
		Now suppose that we have distinct prime $T$-ideals $P_1\neq P_2$, both of which divide $I$. Again, each of these primes must divide either $J_1$ or $J_2$ but not both. Without loss of generality, assume $P_1|J_1$. If $P_2|J_1$, then we can proceed similarly to the previous case. Since $R_1$ is ideal-preserving, we can select $\alpha\in R_1\cap P_1\backslash P_2$ and $\beta\in R_1\cap J_2\backslash P_2$. Then $\alpha\beta\in R_1\cap J_2\subseteq R$ and $\alpha\beta\in P_1\backslash P_2$, so $R\cap P_1\nsubseteq P_2$. If $P_2|J_2$, we will instead make use of the fact that $R_2$ is ideal-preserving. In this case, select $\alpha\in R_2\cap P_1\backslash P_2$ and $\beta\in R_2\cap J_1\backslash P_2$. Then $\alpha\beta\in R_2\cap J_1\subseteq R$ and $\alpha\beta\in P_1\backslash P_2$, so $R\cap P_1\nsubseteq P_2$. Thus, $R$ is an ideal-preserving subring of $T$.
	\end{proof}
	
	The same pattern of inheritance is not observed with associated or locally associated subrings, as the following example shows.
	
	\begin{example}
		Let $R_1=\bZ[3\sqrt{2}]$, $R_2=\bZ[11\sqrt{2}]$, and $T=\bZ[\sqrt{2}]$. It can be verified that $R_1$ and $R_2$ are both associated subrings (and therefore locally associated subrings) of $T$ using the characterizations found in Theorem \ref{as equiv conditions}. However, $R=R_1\cap R_2=\bZ[33\sqrt{2}]$ is not a locally associated subring (and therefore not an associated subring) of $T$; this can be verified using the characterizations found in Theorem \ref{la equiv conditions}. These can also be checked using the table found at \cite{quadla}, which we will discuss more in-depth later.
	\end{example}
	
	Before we narrow our focus to look solely at orders in a number field, we have one more result to show that will hold more generally. This is arguably one of the most important results in this paper and serves to illustrate just how closely associated subrings are connected to ideal-preserving and locally associated subrings. First, a pair of lemmas.
	
	\begin{lemma}
		\label{Lemma 1}
		Let $T$ be a commutative ring with identity, $R\subseteq T$ a locally associated subring, and $I=(R:T)$. If for some $\alpha \in T$ there exists $\beta\in T$ relatively prime to $I$ such that $\alpha\beta\in R$, then there exists $u\in U(T)$ such that $\alpha u\in R$.
	\end{lemma}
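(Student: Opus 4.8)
The plan is to reduce everything to the third characterization of locally associated subrings from Proposition~\ref{la first conditions} and then let the conductor ideal absorb an error term. First I would apply Condition 3 to the element $t=\beta$, which is legitimate since $\beta$ is relatively prime to $I$: this produces $r\in R$ relatively prime to $I$ \emph{as an element of $R$} (that is, $rR+I=R$) together with a unit $u\in U(T)$ satisfying $\beta=ru$. I would then claim that this very unit $u$ already works, i.e. that $\alpha u\in R$, so that no further search for a unit is needed.

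To confirm the claim I would first record, from $rR+I=R$ and $1\in R$, an identity $rs+\gamma=1$ with $s\in R$ and $\gamma\in I$. Using commutativity and $\beta=ru$, I would observe that $(\alpha u)r=\alpha(ur)=\alpha\beta\in R$, and then expand
$$\alpha u=(\alpha u)(rs+\gamma)=((\alpha u)r)s+(\alpha u)\gamma=(\alpha\beta)s+(\alpha u)\gamma.$$
Here the first summand lies in $R$ because $\alpha\beta\in R$ and $s\in R$, while the second lies in $R$ because $\gamma\in I=(R:T)$ forces $\gamma T\subseteq R$, hence $\gamma(\alpha u)\in R$. This would give $\alpha u\in R$ and finish the argument.

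I do not anticipate a serious obstacle once the correct characterization is in play; the only real idea is to \emph{recycle} the unit $u$ coming from the local association of $\beta$ rather than hunt for a new one, and to notice that the conductor does the heavy lifting. The single point I would be careful to articulate is the distinction between the two notions of relative primality in use: ``$\beta$ relatively prime to $I$'' means $\beta T+I=T$ in $T$, whereas the extracted $r$ is relatively prime to $I$ in $R$, i.e. $rR+I=R$. It is this stronger $R$-level relation that yields the crucial identity $rs+\gamma=1$ with $s\in R$, and without it the term $(\alpha u)\gamma$ could not be guaranteed to return to $R$.
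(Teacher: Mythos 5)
Your proof is correct and takes essentially the same route as the paper's: both apply the locally associated characterization of Proposition \ref{la first conditions} to the element $\beta$, recycle the resulting unit, and let the conductor ideal absorb the leftover term. The only cosmetic difference is that you invoke Condition 3 (writing $\beta=ru$ and then needing the identity $rs+\gamma=1$), whereas the paper invokes Condition 2 to get the unit directly as $u=\beta r+\gamma$, so its final computation is one step shorter: $\alpha u=(\alpha\beta)r+\alpha\gamma\in R$.
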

	
	\begin{proof}
		Let $\alpha\in T$, and suppose that there is some $\beta\in T$ such that $\beta T+I=T$ and $\alpha\beta\in R$. Then by Condition 2 in Theorem \ref{la equiv conditions}, there exists $r\in R$ relatively prime to $I$ and $\gamma\in I$ such that $\beta r+\gamma=u\in U(T)$. Then $\alpha u=\alpha(\beta r+\gamma)=(\alpha\beta)r+\alpha\gamma$. Since $\alpha\beta\in R$, then $(\alpha\beta)r\in R$; since $\gamma\in I$, then $\alpha\gamma\in R$. Thus, $\alpha u\in R$.
	\end{proof}
	
	\begin{lemma}
		\label{Lemma 2}
		Let $T$ be a Dedekind domain, $R\subseteq T$ a subring with identity, and $I=(R:T)\neq \{0\}$. Also assume that $R$ is both an ideal-preserving and a locally associated subring of $T$. Then for any $T$-ideal $J$ dividing $I$, $R+J$ is also a locally associated and ideal-preserving subring of $T$.
	\end{lemma}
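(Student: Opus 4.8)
The plan is to handle the two properties separately, dispatching the ideal-preserving claim immediately and reserving the real work for local associativity. Since $R \subseteq R+J \subseteq T$ and any subring of $T$ containing an ideal-preserving subring is again ideal-preserving (as recorded in the example following the definition), $R+J$ is automatically an ideal-preserving subring of $T$. Moreover, because $J$ divides $I$ we have $I \subseteq J$, so Theorem \ref{ip intermediate cond} gives $(R+J:T) = J$; thus the conductor of the ring $S := R+J$ into $T$ is exactly $J$, which is the ideal against which local associativity must be tested.

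For the local associativity of $S$ I would verify Condition 2 of Theorem \ref{la equiv conditions}: given any $t$ with $t+J \in U(\sfrac{T}{J})$, I must produce $r + J \in U(\sfrac{S}{J})$ and $\beta \in J$ with $tr + \beta \in U(T)$. The difficulty is that local associativity of $R$ only speaks to elements coprime to $I$, whereas our $t$ is merely coprime to $J$, a weaker condition since $I \subseteq J$. The first step is therefore to replace $t$ by a nearby element $t' = t + j$ with $j \in J$ that is coprime to $I$ yet agrees with $t$ modulo $J$. Writing $I = P_1^{a_1}\cdots P_k^{a_k}$ and $J = P_1^{b_1}\cdots P_k^{b_k}$ with $0 \leq b_i \leq a_i$, the primes dividing $J$ cause no trouble (adding $j \in J \subseteq P_i$ does not change the class of $t$ modulo $P_i$, and $t$ is already coprime to those primes), so the only primes to control are the $P_i$ dividing $I$ but not $J$. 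Since $J$ is coprime to each such $P_i$, the Chinese Remainder Theorem lets me choose $j \in J$ realizing any prescribed nonzero residues at these primes, producing $t' = t + j$ coprime to all of $P_1, \ldots, P_k$ and hence to $I$.

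With $t'$ in hand I would apply Condition 2 of Theorem \ref{la equiv conditions} to the locally associated subring $R$: since $t' + I \in U(\sfrac{T}{I})$, there exist $\rho + I \in U(\sfrac{R}{I})$ and $\gamma \in I$ with $t'\rho + \gamma \in U(T)$. Two routine checks then finish the argument. First, $\rho$ survives as a unit in the correct quotient: choosing $\rho' \in R$ with $\rho\rho' \equiv 1 \pmod I$, the inclusion $I \subseteq J$ gives $\rho\rho' \equiv 1 \pmod J$ with $\rho' \in R \subseteq S$, so $\rho + J \in U(\sfrac{S}{J})$. Second, setting $\beta := j\rho + \gamma$, we have $\beta \in J$ (since $j\rho \in J$ and $\gamma \in I \subseteq J$) and $t\rho + \beta = (t+j)\rho + \gamma = t'\rho + \gamma \in U(T)$. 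This is precisely Condition 2 for $S$, so $S = R+J$ is locally associated.

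I expect the main obstacle to be the first step of the local-associativity argument, namely engineering $j \in J$ so that $t+j$ becomes coprime to $I$ without disturbing its residue modulo $J$; this is exactly where the Dedekind hypothesis and the Chinese Remainder Theorem are essential. Everything afterward is bookkeeping that transfers the local associativity of $R$ relative to $I$ down to $S$ relative to $J$.
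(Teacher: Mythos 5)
Your proof is correct, and it shares the paper's core idea --- perturb the given element by something in $J$ (via the Chinese Remainder Theorem) so that it becomes relatively prime to $I$, then invoke the local associativity of $R$ and transfer the result back to $R+J$ --- but your execution is genuinely streamlined. The paper must split into two cases: it first assumes $J$ is relatively prime to $IJ^{-1}$, because its CRT setup prescribes $\beta\equiv\alpha\pmod{J}$ and $\beta\equiv 1\pmod{IJ^{-1}}$, which requires those two ideals to be coprime; it then handles general $J$ by passing to the ``saturation'' $A=P_1^{a_1}\cdots P_j^{a_j}$, showing $R+A$ is locally associated, and observing that coprimality to $J$ coincides with coprimality to $A$. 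Your argument makes the saturation detour unnecessary: by prescribing residues of $j\in J$ only at the primes dividing $I$ but \emph{not} $J$ (coprimality at the primes dividing $J$ being automatic and undisturbed by adding $j\in J$), your CRT step works for arbitrary $J\mid I$ in a single pass. A second cosmetic difference is that you verify Condition 2 of Theorem \ref{la equiv conditions} (the coset form, producing $\rho+J\in U(\sfrac{(R+J)}{J})$ and $\beta=j\rho+\gamma\in J$ with $t\rho+\beta\in U(T)$), whereas the paper verifies Condition 3 (the multiplicative form $t=ru$); these are interchangeable by that theorem. Both proofs rely on the same supporting facts --- inheritance of the ideal-preserving property by intermediate subrings and the conductor computation $(R+J:T)=J$ from Theorem \ref{ip intermediate cond} --- and your bookkeeping at the end (checking $\rho+J$ is a unit in $\sfrac{(R+J)}{J}$ because $I\subseteq J$, and that $\beta\in J$) is complete, so nothing is missing.
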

	
	\begin{proof}
		We have already seen that since $R+J$ must be an ideal-preserving subring of $T$, since $R\subseteq R+J\subseteq T$. Moreover, $(R+J:T)=J$. We need only show that $R+J$ is also a locally associated subring of $T$. First, note that if $I=T$, then $R=T$ and the result holds trivially (since the only $T$-ideal dividing $I$ is $T$ itself). Otherwise, we can factor $I=P_1^{a_1}\dots P_k^{a_k}$, with each $P_i$ being a prime $T$-ideal.
		
		First, assume that $J$ is relatively prime to $IJ^{-1}$; that is, there exists $1\leq j\leq k$ such that (after potentially rearranging the prime factors of $I$) $J=P_1^{a_1}\dots P_j^{a_j}$ and $IJ^{-1}=P_{j+1}^{a_{j+1}}\dots P_k^{a_k}$. We will show that $R+J$ is a locally associated subring of $T$ using Condition 3 from Theorem \ref{la equiv conditions}.
		
		Let $\alpha\in T$ be relatively prime to $J$. By the Chinese Remainder Theorem, we can pick $\beta\in T$ such that $\beta\equiv \alpha\modulo{J}$ and $\beta\equiv 1\modulo{IJ^{-1}}$. Notably, $\beta$ is relatively prime to $I$ and thus there exists $r\in R$ relatively prime to $I$ and $u\in U(T)$ such that $\beta=ru$. Now letting $\gamma\in J$ such that $\alpha=\beta+\gamma$, we have $\alpha=\beta+\gamma=(r+u^{-1}\gamma)u$. Since $r$ is relatively prime to $I$ as an element of $R$, then $r+u^{-1}\gamma$ is relatively prime to $J$ as an element of $R+J$. Then using Condition 3 from Theorem \ref{la equiv conditions} and the fact that $J=(R+J:T)$, we have that $R$ is a locally associated subring of $T$.
		
		We will now relax the condition that $J$ is relatively prime to $IJ^{-1}$. For some $1\leq j\leq k$, we will write (after potentially rearranging the prime factors of $I$) $J=P_1^{b_1}\dots P_j^{b_j}$ with $1\leq b_i\leq a_i$ for $1\leq i\leq j$. Then define $A:=P_1^{a_1}\dots P_j^{a_j}$ and note that $A$ is relatively prime to $IA^{-1}$. By the above case, this means that $R+A$ is a locally associated subring of $T$. Then for every $t\in T$ relatively prime to $A$, there exists $r\in R+A$ relatively prime to $A$ and $u\in U(T)$ such that $t=ru$. Since the set of prime $T$-ideals dividing $A$ is exactly the same set of prime $T$-ideals dividing $J$, an element $t\in T$ is relatively prime to $J$ if and only if it is relatively prime to $A$. Then any $t\in T$ relatively prime to $J$ has $r\in R+A\subseteq R+J$ relatively prime to $A$ (and thus relatively prime to $J$ as an element of $R+J$) and $u\in U(T)$ such that $t=ru$. Then $R+J$ is a locally associated subring of $T$.
	\end{proof}
	
	\begin{theorem}
		\label{ass iff ip and la}
		Let $T$ be a Dedekind domain, $R\subseteq T$ a subring with identity, and $I=(R:T)\neq\{0\}$. Also assume that $T$ is integral over $R$. Then $R$ is an associated subring of $T$ if and only if $R$ is both an ideal-preserving and a locally associated subring of $T$.
	\end{theorem}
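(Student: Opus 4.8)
The forward implication is essentially already in hand. Theorem~\ref{ass implies ip} says any associated subring is ideal-preserving, and Theorem~\ref{ass implies la} gives local associativity once $U(R/I)=(R/I)\cap U(T/I)$, which that theorem notes holds whenever $T$ is integral over $R$---as assumed here. So the entire content of the statement is the converse, and that is where I would spend the proof.

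For the converse, assume $R$ is both ideal-preserving and locally associated. Since $R$ is associated exactly when every $\alpha\in T$ admits a unit $u$ with $\alpha u\in R$, and since Lemma~\ref{Lemma 1} promotes any $\beta\in T$ coprime to $I$ with $\alpha\beta\in R$ into such an honest unit, the plan is to reduce everything to the following: for each nonzero $\alpha\in T$, produce a \emph{single} $\beta\in T$ coprime to $I$ with $\alpha\beta\in R$. This cleanly separates the two hypotheses---local associativity enters only through Lemma~\ref{Lemma 1}, while ideal-preservation does the work of building $\beta$. (The case $\alpha=0$ is trivial as $0\in R$.)

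To build $\beta$, I would factor $I=P_1^{a_1}\cdots P_k^{a_k}$ into prime $T$-ideals. For each $i$ the containment $(\alpha)P_i\subsetneq(\alpha)$ is strict, so $(\alpha)\nsubseteq(\alpha)P_i$, and ideal-preservation yields some $r_i\in R\cap(\alpha)\setminus(\alpha)P_i$. Here $r_i\in(\alpha)$ forces valuation at least $v_{P_i}(\alpha)$ at every prime, while $r_i\notin(\alpha)P_i$ forces equality at $P_i$; hence $\beta_i:=r_i/\alpha$ lies in $T$, is a unit at $P_i$, and satisfies $\alpha\beta_i=r_i\in R$. I would then glue these local data together by the Chinese Remainder Theorem, choosing $\beta$ with $\beta\equiv\beta_i$ modulo $P_i^{a_i}$ for every $i$. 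Such a $\beta$ is a unit at each $P_i$, hence coprime to $I$, and $\alpha\beta\equiv\alpha\beta_i=r_i$ modulo $P_i^{a_i}$, so $\alpha\beta\in R+P_i^{a_i}$ for every $i$. Finally $\bigcap_i(R+P_i^{a_i})=R+\bigcap_iP_i^{a_i}=R+I=R$ (Theorem~\ref{ip intersect cond ideals}, or equivalently the product decomposition of $R/I$ in Theorem~\ref{ip equiv conditions}(5), both of which use $1\in R$ and $T$ integral over $R$), so $\alpha\beta\in R$. Applying Lemma~\ref{Lemma 1} then finishes the converse.

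The step I expect to be the genuine obstacle---and the reason both hypotheses are truly needed---is the passage from the local solutions to a global one. Ideal-preservation only guarantees that $R$ meets $(\alpha)$ outside one ideal at a time; it does not let one avoid the whole union $\bigcup_i(\alpha)P_i$ simultaneously. This is exactly the gap between being ideal-preserving and the ``strongly ideal-preserving'' flavor of the associated property, and it is what fails in the $\bZ[33\sqrt{2}]$ intersection example. The Chinese Remainder patching bridges this gap, but it only lands $\alpha\beta$ in each $R+P_i^{a_i}$, so the argument is not complete until the intersection identity $\bigcap_i(R+P_i^{a_i})=R$ pulls it back into $R$; confirming that identity is precisely where integrality and $1\in R$ are indispensable. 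A more inductive alternative would replace $R$ by the rings $R+J$ for $J\mid I$ furnished by Lemma~\ref{Lemma 2}, peeling off one prime power at a time, but the CRT route above seems the most direct.
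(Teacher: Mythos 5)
Your proof is correct, and it follows the same overall skeleton as the paper's argument---Lemma~\ref{Lemma 1} as the bridge from a coprime-to-$I$ multiplier to an honest unit, Chinese Remainder patching, and the intersection identity $\bigcap_i(R+P_i^{a_i})=R$ from Theorem~\ref{ip intersect cond ideals}---but you take a genuinely more direct route through the middle. The paper proceeds in two stages: it first proves the theorem when $I$ is primary (splitting into the cases $\alpha$ coprime to $I$, $\alpha\in I$, and $\alpha\in P\setminus I$), and then for general $I$ invokes Lemma~\ref{Lemma 2} together with Theorem~\ref{ip intermediate cond} to show that each intermediate ring $R_i=R+P_i^{a_i}$ is itself an ideal-preserving, locally associated subring with primary conductor, hence associated; only then does it glue the resulting associates $\alpha u_i\in R_i$ by CRT. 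You bypass that entire reduction: applying ideal-preservation to the single strict containment $(\alpha)\nsubseteq(\alpha)P_i$ hands you $r_i=\alpha\beta_i\in R$ with $\beta_i\notin P_i$ in one stroke, uniformly in $\alpha$, with no case analysis, no Lemma~\ref{Lemma 2}, and no primary case. This is a real economy---your converse needs only the definition of ideal-preservation, CRT, Theorem~\ref{ip intersect cond ideals}, and Lemma~\ref{Lemma 1}. What the paper's longer route buys is the intermediate statement that every $R+P_i^{a_i}$ is an associated subring of $T$, a local-to-global fact about the network of intermediate rings that is of independent interest (and Lemma~\ref{Lemma 2} is reusable elsewhere), whereas your argument produces the associate for $\alpha$ without certifying anything about those intermediate rings. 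Your closing diagnosis is also accurate: the only places integrality and $1\in R$ enter the converse are through the intersection identity, exactly as in the paper.
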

	
	\begin{proof}
		First, note that we have already shown in Theorems \ref{ass implies ip} and \ref{ass implies la} that any associated subring is also ideal-preserving and locally associated. We need to show the converse.
		
		Assume that $R$ is both an ideal-preserving and a locally associated subring of $T$.  To start, we will also assume that $I$ is primary, i.e. $I=P^a$ for some prime $T$-ideal $P$ and some $a\in\bN$. Let $\alpha\in T$. We will consider three cases. First, if $\alpha$ is relatively prime to $I$, then since $R$ is locally associated, Condition 3 from Theorem \ref{la equiv conditions} tells us that there exists $u\in U(T)$ such that $\alpha u\in R$. If $\alpha\in I$, then $\alpha\cdot 1\in R$. This covers the first two cases.
		
		The final case to consider for primary $I$ is when $\alpha\in P\backslash I$. In this case, there must exist some $1\leq m<a$ such that $\alpha\in P^m\backslash P^{m+1}$. Since $R$ is ideal-preserving, $(\alpha)\nsubseteq P^{m+1}$ tells us that $R\cap (\alpha)\nsubseteq P^{m+1}$, i.e. there exists $\beta\in T$ such that $\alpha\beta\in R\backslash P^{m+1}$. Necessarily, $\beta\notin P$ and is thus relatively prime to $I$. By Lemma \ref{Lemma 1}, there exists $u\in U(T)$ such that $\alpha u\in R$. Then since such a unit exists for any $\alpha\in T$, $R$ is an associated subring of $T$.
		
		Now dropping the assumption that $I$ is primary, let $I=P_1^{a_1}\dots P_k^{a_k}$ be the factorization of $I$ into prime $T$-ideals. For each $1\leq i\leq k$, define $R_i=R+P_i^{a_i}$. Since $R$ is both ideal-preserving and locally associated, so is each $R_i$ by Lemma \ref{Lemma 2}. Moreover, since $R$ is ideal-preserving, Theorem \ref{ip intermediate cond} tells us that each $R_i$ has primary conductor ideal $P_i^{a_i}$. Thus, $R_i$ is an associated subring of $T$ for each $1\leq i\leq k$. Then for each $1\leq i\leq k$, there exists $u\in U(T)$ such that $r_i:=\alpha u_i\in R_i$. By the Chinese Remainder Theorem, we can now choose some $\beta\in T$ such that $\beta\equiv u_i\modulo{P_i^{a_i}}$ for every $1\leq i\leq k$. Since $\beta$ is congruent to a unit modulo each prime dividing $I$, $\beta$ must be relatively prime to $I$. Furthermore, $\alpha\beta\equiv \alpha u_i\equiv r_i\modulo{P_i^{a_i}}$, so by applying Theorem \ref{ip intersect cond ideals} several times, we get $$\alpha\beta\in \bigcap_{i=1}^{k}R_i=R.$$ Then using Lemma \ref{Lemma 1} again, there exists $u\in U(T)$ such that $\alpha u\in R$. Then $R$ is an associated subring of $T$.
	\end{proof}
	
	Then in the case that $T$ is a Dedekind domain, $R$ is a subring with identity whose conductor ideal $(R:T)$ is nonzero, and $R\subseteq T$ is an integral extension, $R$ is an associated subring of $T$ if and only if it is both ideal-preserving and locally associated. Whether this result will hold more generally is still under investigation; at this juncture, it seems unlikely to hold in full generality. However, this does give us the a particularly interesting case worth mentioning.
	
	\begin{corollary}
		\label{ao iff ipo and lao}
		Let $R$ be an order in a number field. Then $R$ is an associated order if and only if $R$ is both an ideal-preserving order and a locally associated order.
	\end{corollary}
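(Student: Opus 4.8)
The plan is to obtain the corollary as a direct specialization of Theorem \ref{ass iff ip and la} to the case $T=\Rbar=\cO_K$. By the definitions of associated order, ideal-preserving order, and locally associated order, these notions are nothing more than the corresponding subring relationships between $R$ and $\Rbar$. Thus it suffices to verify that the four hypotheses of Theorem \ref{ass iff ip and la} are satisfied in this setting: that $T=\Rbar$ is a Dedekind domain, that $R$ is a subring of $T$ with identity, that the conductor $I=(R:T)$ is nonzero, and that $T$ is integral over $R$. Once these are in place, Theorem \ref{ass iff ip and la} applies verbatim and produces the stated equivalence.

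Three of the four conditions are immediate. The ring of algebraic integers $\Rbar=\cO_K$ in a number field $K$ is a classical example of a Dedekind domain, so $T$ is Dedekind. An order $R$ is by definition a subring of $K$ containing $1$ (indeed a full-rank $\bZ$-lattice in $K$), so $R$ is a subring of $T$ with identity. Finally, $\Rbar$ is by construction the integral closure of $R$ in $K$, so every element of $T$ is integral over $R$; hence $T$ is integral over $R$.

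The one point meriting comment is the nonvanishing of the conductor $I=(R:\Rbar)$. Here I would invoke the fact that $R$ and $\Rbar$ are both free $\bZ$-modules of rank $n=[K:\bQ]$ sharing the common fraction field $K$, so that the index $d:=[\Rbar:R]$ is finite. The quotient $\Rbar/R$ is then a finite abelian group of order $d$, and so $d\cdot(\Rbar/R)=0$; that is, $d\Rbar\subseteq R$. This places $d\in(R:\Rbar)$ and shows $I\neq\{0\}$, as required.

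I do not anticipate any genuine obstacle: the entire content of the corollary is carried by Theorem \ref{ass iff ip and la}, and the only mild subtlety is recalling why the conductor of an order is always nonzero, which is the standard finiteness-of-index argument recorded above.
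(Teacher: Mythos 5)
Your proposal is correct and takes exactly the route the paper intends: the corollary is stated as an immediate specialization of Theorem \ref{ass iff ip and la}, with the hypotheses (that $\Rbar$ is Dedekind, $R$ a subring with identity, $\Rbar$ integral over $R$, and $I=(R:\Rbar)\neq\{0\}$) justified by standard facts about orders, just as you do. Your explicit finite-index argument that $[\Rbar:R]\cdot\Rbar\subseteq R$ forces $I\neq\{0\}$ is the only detail the paper leaves tacit, and it is correct.
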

	
	With this in mind, we now turn our attention more specifically to orders in a number field.
	
	\section{Orders in a Number Field}
	As we have alluded to throughout this paper, the concepts of associated, ideal-preserving, and locally associated subrings are especially interesting in the specific case when $R$ is an order in a number field $K$ and $T=\Rbar=\cO_K$. We saw this in the introduction with Theorems \ref{halter}, \ref{rago hfd}, and \ref{elasticities equal} in which $R$ being an associated order was used. In this section, we will explore how these properties can be applied in this specific case. It is important to note that since we are assuming that $R$ is an order in a number field and $T=\Rbar$, $T$ will be a Dedekind domain, $R$ will be a commutative subring of $T$ with identity, $I=(R:T)$ will be a nonzero ideal, and $T$ will be integral over $R$. Then we will be able to use any of equivalent characterizations of these properties from Theorems \ref{as equiv conditions}, \ref{ip equiv conditions}, and \ref{la equiv conditions} as well as the majority of the results developed thus far in this paper.
	
	To start, we should address one aspect of locally associated subrings that makes these objects less well-behaved than associated or ideal-preserving subrings: the fact that their defining property is not always inherited by intermediate subrings. That is, if $R\subseteq S\subseteq T$ is a tower of subrings, it is possible for $R$ to a locally associated subring of $T$ even if $S$ is not a locally associated subring of $T$. However, this inheritance will hold for locally associated orders in a number field; to show this, we need the following lemmas, both from \cite{conrad}.
	
	\begin{lemma}
		\label{rel prime means invertible}
		Let $R$ be an order in a number field $K$ with conductor ideal $I$. Then any $R$-ideal which is relatively prime to $I$ is invertible. That is, if $J$ is an ideal in $R$ such that $J+I=R$, then $JJ^{-1}=R$, with $J^{-1}=\{\alpha\in K|\alpha J\subseteq R\}$.
	\end{lemma}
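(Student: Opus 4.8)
The plan is to prove the two inclusions $JJ^{-1}\subseteq R$ and $JJ^{-1}\supseteq R$ separately, where the first is immediate and essentially all the content lies in the second. First I would observe that $JJ^{-1}\subseteq R$ holds by the very definition $J^{-1}=\{\alpha\in K:\alpha J\subseteq R\}$, and that $1\in J^{-1}$ (since $J\subseteq R$), so $J=J\cdot 1\subseteq JJ^{-1}$. Thus $JJ^{-1}$ is an $R$-ideal already containing $J$, and to force equality with $R$ it will suffice to show that $JJ^{-1}$ also contains the conductor $I$; the comaximality hypothesis then closes the argument, since $JJ^{-1}\supseteq J+I=R$.

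The key idea for producing enough elements of $J^{-1}$ is to borrow invertibility from $\Rbar=\cO_K$, which is a Dedekind domain. I would form the $\Rbar$-ideal $J\Rbar$ together with its inverse $\mathfrak b:=(J\Rbar)^{-1}=\{\alpha\in K:\alpha J\Rbar\subseteq \Rbar\}$, so that $J\Rbar\cdot\mathfrak b=\Rbar$. Two facts about $\mathfrak b$ then drive everything. First, $I\mathfrak b\subseteq J^{-1}$: for $c\in I$ and $\alpha\in\mathfrak b$ one has $\alpha J\subseteq\Rbar$, whence $(c\alpha)J\subseteq c\Rbar\subseteq R$, the last inclusion because $c\in I=(R:\Rbar)$ means $c\Rbar\subseteq R$. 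Second, $J\mathfrak b=\Rbar$: the containment $J\mathfrak b\subseteq J\Rbar\cdot\mathfrak b=\Rbar$ is clear, while expanding the relation $1\in J\Rbar\cdot\mathfrak b$ as a finite sum $\sum a_{ik}o_{ik}\beta_i$ with $a_{ik}\in J$, $o_{ik}\in\Rbar$, $\beta_i\in\mathfrak b$, and absorbing each coefficient $o_{ik}$ into $\mathfrak b$ (which is an $\Rbar$-module, so $o_{ik}\beta_i\in\mathfrak b$) exhibits $1$ as a sum of products from $J$ and $\mathfrak b$; since $J\mathfrak b$ is an $\Rbar$-submodule of $\Rbar$ containing $1$, it equals $\Rbar$.

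Combining these, $JJ^{-1}\supseteq J(I\mathfrak b)=I(J\mathfrak b)=I\Rbar=I$, which is precisely the missing containment; together with $J\subseteq JJ^{-1}$ and $J+I=R$ this yields $JJ^{-1}=R$. I expect the main obstacle to be the second fact, the upgrade from $J\Rbar\cdot\mathfrak b=\Rbar$ to $J\mathfrak b=\Rbar$: the subtlety is that $\mathfrak b$ inverts the \emph{extended} ideal $J\Rbar$ rather than $J$ itself, so one must verify carefully that the $\Rbar$-module structure of $\mathfrak b$ lets one rewrite each generator $o\cdot a$ (with $a\in J$, $o\in\Rbar$) as $a\cdot(o\beta)$ with $o\beta\in\mathfrak b$, thereby keeping the $J$-factor inside $J$. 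This is the only point where the distinction between working over $R$ and over $\Rbar$ is genuinely delicate; everywhere else the conductor absorbs the discrepancy automatically through $I\Rbar=I$ and $I\Rbar\subseteq R$. Finally, I would note that the hypothesis $J+I=R$ is invoked exactly once, in the concluding comaximality step, and the Dedekind property of $\Rbar$ is used exactly once, to guarantee that $J\Rbar$ is invertible.
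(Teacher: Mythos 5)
Your proof is correct, but there is nothing in the paper to compare it against: the paper does not prove this lemma at all. It is imported, together with Lemma \ref{int ideal rel prime}, as a known result from the cited source \cite{conrad}, so your argument has to stand on its own merits, and it does. The reduction to showing $I\subseteq JJ^{-1}$, the two key facts $I\mathfrak{b}\subseteq J^{-1}$ and $J\mathfrak{b}=\Rbar$ for $\mathfrak{b}=(J\Rbar)^{-1}$, and the concluding chain $JJ^{-1}\supseteq J(I\mathfrak{b})=I(J\mathfrak{b})=I\Rbar=I$ are all valid. The point you flagged as delicate is handled correctly: $J\mathfrak{b}$ is an $\Rbar$-submodule of $\Rbar$ precisely because $\mathfrak{b}$ is an $\Rbar$-module (equivalently, one can just write $J\mathfrak{b}=J(\Rbar\mathfrak{b})=(J\Rbar)\mathfrak{b}=\Rbar$), and $I\Rbar=I$ holds because the conductor is an ideal of $\Rbar$ as well as of $R$. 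This is the standard mechanism for transferring invertibility from the Dedekind domain $\Rbar$ down to $R$ across the conductor, and it is in the same spirit as the treatment in the source the paper cites; an alternative route found in the literature is to localize at each prime of $R$ and use that $R_P=\Rbar_P$ at primes not containing $I$, but your global argument is cleaner and self-contained. One minor remark: invertibility of $J\Rbar$ in the Dedekind domain $\Rbar$ requires $J\neq 0$, which you use tacitly; this is harmless, since if $J=0$ then $J+I=R$ forces $I=R$, hence $R=\Rbar$, and the lemma (like your proof) is to be read with the usual convention that only nonzero ideals are considered for invertibility.
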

	
	\begin{lemma}
		\label{int ideal rel prime}
		Let $R$ be an order in a number field $K$ and $J$ an ideal in $R$. Then every ideal class in $\Cl(R)$ contains a representative which is an integral ideal of $R$ that is relatively prime to $J$. That is, for any invertible $A\in \Inv(R)$, there exists $\alpha\in K$ such that $\alpha A\subseteq R$ and $\alpha A+J=R$.
	\end{lemma}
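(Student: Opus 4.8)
The plan is to build the scalar $\alpha$ directly inside $A^{-1}=(R:A)$, exploiting that an invertible fractional ideal is locally free of rank one. Throughout I take $J\neq\{0\}$, since if $J=\{0\}$ the condition $\alpha A+J=R$ forces $\alpha A=R$ and hence $A$ principal, so the statement is only meaningful for nonzero $J$. As $R$ is Noetherian of Krull dimension one, a nonzero ideal $J$ lies in only finitely many maximal ideals $P_1,\dots,P_m$, and distinct maximal ideals are comaximal. Because $A$ is invertible, so is $A^{-1}$, and an invertible module is finitely generated projective of rank one; hence each localization $(A^{-1})_{P_i}$ is a free $R_{P_i}$-module of rank one. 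The goal is to find $\alpha\in A^{-1}$ that simultaneously generates $(A^{-1})_{P_i}$ for every $i$. Granting such an $\alpha$, the element $\alpha A=(\alpha)A\subseteq A^{-1}A=R$ is an integral invertible ideal in the class of $A$, and for each $i$ we have $\alpha R_{P_i}=(A^{-1})_{P_i}$, so $(\alpha A)_{P_i}=\alpha A_{P_i}=(A^{-1})_{P_i}A_{P_i}=(A^{-1}A)_{P_i}=R_{P_i}$; thus $\alpha A\nsubseteq P_i$ for all $i$. Since every maximal ideal containing $J$ is one of the $P_i$, the ideal $\alpha A+J$ lies in no maximal ideal, so $\alpha A+J=R$, as required.

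To produce such an $\alpha$ I would invoke the module-theoretic Chinese Remainder Theorem. Since $P_1,\dots,P_m$ are pairwise comaximal, the natural map $A^{-1}\to\prod_{i=1}^m A^{-1}/P_iA^{-1}$ is surjective. Each factor $A^{-1}/P_iA^{-1}$ is supported only at $P_i$ (localizing at $P_j$ with $j\neq i$ turns $P_i$ into the unit ideal and kills the factor), so it is isomorphic to $(A^{-1})_{P_i}/P_i(A^{-1})_{P_i}$, a one-dimensional vector space over the field $R/P_i$; in particular it contains a nonzero element. Choosing $\alpha\in A^{-1}$ whose image in every factor is nonzero, the relation $\alpha\notin P_i(A^{-1})_{P_i}$ together with Nakayama's lemma shows that $\alpha$ generates the rank-one local module $(A^{-1})_{P_i}$ for each $i$, which is exactly the property used above.

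The main obstacle, and the reason the classical Dedekind-domain argument does not transcribe verbatim, is that $R$ is generally not integrally closed, so the primes $P_i$ dividing the conductor $I=(R:\Rbar)$ carry no genuine discrete valuation on $R$; the usual proof, which prescribes $P_i$-adic valuations of a generator and patches them together, breaks down at precisely those primes. The device that circumvents this is to translate everything into the language of invertibility: even though $A$ and $A^{-1}$ need not factor through the $P_i$ in any valuation-theoretic way, their invertibility makes them locally principal, and ``$\alpha A$ coprime to $P_i$'' becomes the purely local condition ``$\alpha$ generates $(A^{-1})_{P_i}$'', which the Chinese Remainder Theorem and Nakayama's lemma handle uniformly across all $P_i$, whether or not they divide $I$. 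An alternative would be to use Lemma~\ref{rel prime means invertible} to transfer the problem to the Dedekind domain $\Rbar$, settle coprimality there where honest valuations exist, and descend; but the local-freeness argument above has the advantage of being self-contained and of never needing to separate the primes dividing the conductor from those coprime to it.
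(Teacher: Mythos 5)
Your proof is correct, but there is nothing in the paper to compare it against: the paper states this lemma without proof, citing \cite{conrad}, so the only question is whether your argument stands on its own --- and it does. The chain of reductions is valid: invertibility of $A$ makes $A^{-1}$ finitely generated and locally principal; since $R$ is a one-dimensional Noetherian domain, only finitely many maximal ideals $P_1,\dots,P_m$ contain a nonzero $J$; the Chinese Remainder surjection $A^{-1}\to\prod_{i=1}^m A^{-1}/P_iA^{-1}$, together with the identification $A^{-1}/P_iA^{-1}\cong (A^{-1})_{P_i}/P_i(A^{-1})_{P_i}$ (each factor being killed by $P_i$, hence supported only at $P_i$), produces $\alpha\in A^{-1}$ with nonzero image in every factor; Nakayama upgrades this to $\alpha R_{P_i}=(A^{-1})_{P_i}$, whence $(\alpha A)_{P_i}=(A^{-1}A)_{P_i}=R_{P_i}$, so the integral ideal $\alpha A$ lies in no $P_i$ and is therefore comaximal with $J$. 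Your diagnosis of why the classical Dedekind-domain argument does not transcribe --- primes over the conductor carry no discrete valuation, and local principality of \emph{invertible} ideals is the correct substitute --- is exactly the right point, and it explains why the lemma is stated only for $A\in\Inv(R)$; your approach also has the virtue of treating all primes uniformly, with no case split on whether $P_i$ divides the conductor, and of working verbatim over any one-dimensional Noetherian domain. Two cosmetic points should be patched: you must require $\alpha\neq 0$, which is automatic when $m\geq 1$ (a nonzero image forces $\alpha\neq 0$) but fails vacuously if $J=R$, where one should simply take any nonzero $\alpha\in A^{-1}$; and you should record explicitly that $\alpha A$ is an invertible ideal in the class $[A]$, immediate since it differs from $A$ by the principal factor $(\alpha)$. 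Neither affects the substance: this is a complete, self-contained proof of a statement the paper itself only quotes.
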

	
	\begin{theorem}
		\label{surjective class map}
		Let $R$ be an order in a number field $K$ with conductor ideal $I$, and let $S$ be an intermediate order, $R\subseteq S\subseteq\Rbar$. Then the mapping $\phi:\Cl(R)\to\Cl(S)$ such that $\phi([J])=[JS]$ is a surjective homomorphism (here, $[J]$ refers to the ideal class containing the fractional ideal $J$).
	\end{theorem}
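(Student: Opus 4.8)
The plan is to recognize $\phi$ as the ideal-extension map $J\mapsto JS$ and to split the argument into the (largely formal) verification that $\phi$ is a well-defined homomorphism and the (substantive) proof of surjectivity, for which I would lean on Lemmas \ref{rel prime means invertible} and \ref{int ideal rel prime}.

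First I would record the basic compatibility of the conductor: $I=(R:\Rbar)$ is by definition an $\Rbar$-ideal contained in $R$, and since $R\subseteq S\subseteq\Rbar$ it is simultaneously an ideal of $R$, of $S$, and of $\Rbar$, with $IS=I$. For well-definedness I would check that extension carries invertible fractional $R$-ideals to invertible fractional $S$-ideals. The key algebraic identity is that the $S$-module generated by a product of $R$-ideals is the product of the extended ideals, i.e. $(AB)S=(AS)(BS)$, since $(AS)(BS)$ is generated over $S$ by the elements $(as)(bs')=(ab)(ss')$. Applying this to $JJ^{-1}=R$ gives $(JS)(J^{-1}S)=RS=S$, so $JS$ is invertible with inverse $J^{-1}S$; the same identity shows $\phi$ is multiplicative; and since $(\alpha R)S=\alpha S$, a principal $R$-ideal extends to a principal $S$-ideal. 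Hence $\phi$ descends to a homomorphism $\Cl(R)\to\Cl(S)$.

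The heart of the matter is surjectivity, and this is where I expect the real work. Given a class $[A]\in\Cl(S)$, I would apply Lemma \ref{int ideal rel prime} to the order $S$ with the ideal $I$ to replace $A$ by an integral $S$-ideal $B$ in the same class satisfying $B+I=S$. Writing $1=\beta+i$ with $\beta\in B$ and $i\in I$, the crucial observation is that $\beta=1-i$ lies in $R$ (as $1,i\in R$), so $\beta\in B\cap R$. Setting $J:=B\cap R$, this already forces $J+I=R$. I would then prove $JS=B$: the inclusion $JS\subseteq B$ is immediate since $J\subseteq B$ and $B$ is an $S$-ideal, while for the reverse any $b\in B$ decomposes as $b=b\beta+bi$, where $b\beta\in JS$ (since $b\in S$ and $\beta\in J$) and $bi\in B\cap I\subseteq B\cap R=J$ (since $b\in S$, $i\in I$ give $bi\in IS=I$, while $bi\in B$). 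Because $J$ is relatively prime to $I$, Lemma \ref{rel prime means invertible} guarantees $J$ is invertible, so $[J]\in\Cl(R)$ and $\phi([J])=[JS]=[B]=[A]$.

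The main obstacle is precisely this contraction step: controlling the extension–contraction behavior of ideals across the non-maximal orders $R\subseteq S$. The device that unlocks it is that comaximality of $B$ with the conductor produces a decomposition $1=\beta+i$ whose $B$-summand $\beta=1-i$ automatically falls into $R$; this single observation simultaneously yields $J+I=R$ and the equality $JS=B$, after which the invertibility of $J$ needed to place $[J]$ in $\Cl(R)$ is a direct citation of Lemma \ref{rel prime means invertible}.
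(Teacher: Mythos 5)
Your proposal is correct and follows essentially the same route as the paper: reduce via Lemma \ref{int ideal rel prime} to an integral $S$-ideal coprime to $I$, contract to $J=B\cap R$, use the decomposition $1=\beta+i$ to get $\beta\in B\cap R$ and hence $J+I=R$, invoke Lemma \ref{rel prime means invertible} for invertibility, and verify $JS=B$ (your elementwise computation $b=b\beta+bi$ is exactly the paper's ideal identity $A=AJ+AI\subseteq JS\subseteq A$). Your added check that extension preserves invertibility and principality is a slightly more careful treatment of well-definedness than the paper gives, but it does not change the argument.
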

	
	\begin{proof}
		First, note that $\phi$ is well-defined. If $[J_1]=[J_2]$ for two invertible fractional $R$-ideals $J_1$ and $J_2$, then $J_1=\alpha J_2$ for some $\alpha\in K$. Thus, $J_1S=\alpha J_2S$, so $\phi([J_1])=[J_1S]=[J_2S]=\phi([J_2])$. Furthermore, $\phi$ is a group homomorphism, since for any invertible fractional $R$-ideals $J_1$ and $J_2$, $\phi([J_1J_2])=[J_1J_2S]=[J_1S][J_2S]=\phi([J_1])\phi([J_2])$. All that remains to show is that $\phi$ is surjective.
		
		Let $[A]\in\Cl(S)$. Since $I$ is an ideal of $S$, we can use the second lemma above to assume without loss of generality that $A$ is an integral ideal of $S$ which is relatively prime to $I$. Then let $J=R\cap A$. Since $A$ is relatively prime to $I$, we know that there is some $\alpha\in A$ and $\beta\in I$ such that $\alpha+\beta=1$. Then $\alpha=1-\beta\in R$, so in fact $\alpha\in J$. Then $J$ is relatively prime to $I$ as an $R$-ideal. The first lemma above tells us that $J\in \Inv(R)$. Moreover, note the following:
		$$A=AR=A(J+I)=AJ+AI\subseteq JS\subseteq A.$$
		Then $A=JS$, so $\phi([J])=[JS]=[A]$. Thus, $\phi$ is a surjective homomorphism.
	\end{proof}
	
	\begin{corollary}
		\label{la in towers}
		Let $R$ be a locally associated order in a number field $K$. Then if $S$ is an intermediate order to $R$, i.e. $R\subseteq S\subseteq \Rbar$, $S$ is also locally associated.
	\end{corollary}
	
	\begin{proof}
		By Theorem \ref{la equiv conditions}, we know that since $R$ is locally associated, $\abs{\Cl(R)}=\abs{\Cl(\Rbar)}$. By the theorem, $\abs{\Cl(R)}\geq \abs{\Cl(S)}$; by Lemma \ref{exact sequence}, $\abs{\Cl(S)}\geq \abs{\Cl(\Rbar)}$. Then $\abs{\Cl(S)}\geq \abs{\Cl(\Rbar)}=\abs{\Cl(R)}\geq\abs{\Cl(S)}$, so $\abs{\Cl(S)}=\abs{\Cl(\Rbar)}$. Then again using Theorem \ref{la equiv conditions}, we have that $S$ is locally associated.
	\end{proof}
	
	Thus, in the realm of orders in a number field, the locally associated property is inherited by intermediate orders just like the associated and ideal-preserving properties. Among other applications, this will allow us to more easily search for orders with are locally associated. For instance, once we establish that $\bZ[5\sqrt{2}]$ is not a locally associated order (as we will momentarily), there is no need to check whether $\bZ[10\sqrt{2}]$ is a locally associated order, since $\bZ[10\sqrt{2}]\subseteq \bZ[5\sqrt{2}]\subseteq \bZ[\sqrt{2}]$. We can immediately draw the conclusion that $\bZ[10\sqrt{2}]$ is not locally associated.
	
	The ideal-preserving property also has additional features and implications in the case of orders in a number field. Perhaps unsurprisingly, these will be related to the conductor ideal of the order in question, as the following results show.
	
	\begin{lemma}
		Let $R$ be an order in a number field with conductor ideal $I$. Then $\abs{\quot{\Rbar}{I}}$ is not a rational prime.
	\end{lemma}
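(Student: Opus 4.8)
The plan is to argue by contradiction, exploiting the fact that a finite commutative ring of prime cardinality has no proper unital subring. First I would record the standard facts about the conductor: $I=(R:\Rbar)=\{x\in\Rbar : x\Rbar\subseteq R\}$ is an ideal of $\Rbar$ satisfying $I\subseteq R\subseteq\Rbar$ (for $x\in I$ we have $x=x\cdot 1\in x\Rbar\subseteq R$). Consequently both $\quot{\Rbar}{I}$ and $\quot{R}{I}$ are defined, and $\quot{R}{I}$ is a subring of $\quot{\Rbar}{I}$ containing the identity $1+I$ (recall $1\in R$ since $\bZ\subseteq R$). I would also dispose of the degenerate case $R=\Rbar$, in which $I=\Rbar$ and $\abs{\quot{\Rbar}{I}}=1$, which is not a prime.

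Next, suppose toward a contradiction that $\abs{\quot{\Rbar}{I}}=p$ for some rational prime $p$. The underlying additive group of $\quot{\Rbar}{I}$ then has prime order $p$, hence is cyclic. Moreover $1+I\neq 0+I$ (otherwise $1\in I$ would force $I=\Rbar$ and cardinality $1$), so the identity $1+I$ is a nonzero element of this cyclic group and therefore generates it. Hence the ring generated by $1+I$ is all of $\quot{\Rbar}{I}$, and in fact $\quot{\Rbar}{I}\cong\bZ/p\bZ\cong\bF_p$.

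The crux is then immediate: $\quot{R}{I}$ is a subring of $\quot{\Rbar}{I}$ containing $1+I$, so its additive subgroup already contains a generator of the cyclic group $\quot{\Rbar}{I}$; therefore $\quot{R}{I}=\quot{\Rbar}{I}$. Since $I\subseteq R\subseteq\Rbar$, equality of these quotients forces $R=\Rbar$, whence $I=\Rbar$ and $\abs{\quot{\Rbar}{I}}=1$, contradicting $\abs{\quot{\Rbar}{I}}=p$. This contradiction shows $\abs{\quot{\Rbar}{I}}$ is never a rational prime.

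There is no serious obstacle here; the only care needed is the bookkeeping that $I\subseteq R$ (so $\quot{R}{I}$ is defined and sits inside $\quot{\Rbar}{I}$) and that $1+I$ is genuinely a generator of the additive group. The conceptual point doing all the work is simply that $\bF_p$ admits no proper subring containing $1$. Equivalently, one could observe that prime cardinality would make $I$ a maximal—hence prime—ideal of the Dedekind domain $\Rbar$ with residue field $\bF_p$ and then run the same subring argument, but the additive-generation argument is the most direct route.
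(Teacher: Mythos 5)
Your proof is correct and takes essentially the same approach as the paper's: both arguments work with the additive-group tower $I\subseteq R\subseteq\Rbar$, use the primality of $\abs{\quot{\Rbar}{I}}$, and hinge on the fact that $1\in R\backslash I$. The paper factors $p=\abs{\quot{\Rbar}{R}}\cdot\abs{\quot{R}{I}}$ and rules out each factor being $1$, while you equivalently observe that $1+I$ generates the prime-order additive group so that $\quot{R}{I}$ must be all of $\quot{\Rbar}{I}$ --- a cosmetic rearrangement of the same counting argument.
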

	
	\begin{proof}
		Assume that there exists a number field $K$ with order $R$ such that the conductor ideal $I$ of $R$ has prime norm in $\Rbar$; that is, $\abs{\quot{\Rbar}{I}}=p$ for some rational prime $p$. One will note that if $R=\Rbar$, then $I=\Rbar$, i.e. $I$ would have norm 1. Thus, $R$ is a non-maximal order. Now considering the additive groups of $\Rbar$, $R$, and $I$, we have that $p=\abs{\quot{\Rbar}{I}}=\abs{\quot{\Rbar}{R}}\cdot\abs{\quot{R}{I}}$. Since both factors on the right-hand side of this equality must be positive integers which multiply to be the prime integer $p$, it must be the case that one of these factors is equal to 1. However, we have already established that $\Rbar\neq R$, so we must have $R=I$. However, note that $1\in R\backslash I$, a contradiction. Then $\abs{\quot{\Rbar}{I}}$ cannot be prime.
	\end{proof}
	
	\begin{theorem}
		Let $R$ be an ideal-preserving order in a number field $K$ with conductor ideal $I$, and let $P$ be a prime divisor of $I$ which lies over the rational prime $p$. Then the inertial degree $f(P|p)>1$; that is, $\abs{\quot{\Rbar}{P}}=p^f$ for some $f>1$.
	\end{theorem}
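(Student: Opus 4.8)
The plan is to argue by contradiction: I would assume $f(P|p)=1$, show that this forces $R$ to surject onto the residue field $\Rbar/P$, and then contradict this using the intermediate conductor computation from Theorem \ref{ip intermediate cond}. The whole argument hinges on translating ``inertial degree $1$'' into a statement about the image of $R$ modulo $P$, after which the conductor formula does the rest.

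First I would record the standing facts: since we are in the setting of orders, $\Rbar$ is a Dedekind domain, $R\subseteq\Rbar$ is a subring with identity, and $I=(R:\Rbar)$ is a nonzero ideal, so the hypotheses of Theorem \ref{ip intermediate cond} are in force. Because $P$ is a prime divisor of $I$, we have $I\subseteq P$. Now suppose toward a contradiction that $f(P|p)=1$, so that $\Rbar/P\cong\bF_p$. Consider the image of $R$ under the quotient map $\Rbar\to\Rbar/P$; this image is $(R+P)/P$, an additive subgroup of $\Rbar/P\cong\bF_p$ containing the class $1+P$. Since the additive group of $\bF_p$ is generated by $1$, this subgroup must be all of $\Rbar/P$, and hence $R+P=\Rbar$.

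Next I would invoke Theorem \ref{ip intermediate cond} with the $\Rbar$-ideal $J=P$. As $R$ is an ideal-preserving order with $I\neq\{0\}$ and $I\subseteq P$, that theorem yields $(R+P:\Rbar)=P$. On the other hand, the equality $R+P=\Rbar$ just established forces $(R+P:\Rbar)=(\Rbar:\Rbar)=\Rbar$. Comparing the two computations gives $P=\Rbar$, contradicting that $P$ is a proper prime ideal. Therefore $f(P|p)>1$, which is exactly the claim that $\abs{\Rbar/P}=p^f$ with $f>1$.

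The substance of the proof is concentrated in the single step identifying inertial degree $1$ with the surjectivity $R+P=\Rbar$; everything afterward is a direct appeal to the conductor formula, so I expect that identification to be the only real point requiring care. I would also stress that the ideal-preserving hypothesis is essential and cannot be removed: the preceding lemma on $\abs{\Rbar/I}$ only rules out a prime norm, and for a primary conductor $I=P^a$ this merely forbids $fa=1$, leaving the case $f=1,\,a=2$ open. It is precisely Theorem \ref{ip intermediate cond}, and hence the ideal-preserving assumption, that excludes $f=1$ here.
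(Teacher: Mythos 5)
Your proof is correct and follows essentially the same route as the paper: both arguments assume $f(P|p)=1$ and reach a contradiction by applying Theorem \ref{ip intermediate cond} with $J=P$ to conclude that the intermediate order $R+P$ has conductor exactly $P$. The only cosmetic difference is that the paper invokes its preceding lemma (no order has a conductor of prime norm) applied to $R+P$, whereas you inline the equivalent observation that the image of $R$ in the prime field $\Rbar/P$ is everything, forcing $R+P=\Rbar$ and hence $P=\Rbar$; your closing remark that the ideal-preserving hypothesis is what licenses the passage to the intermediate order matches the paper's logic exactly.
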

	
	\begin{proof}
		Suppose that $I$ has a prime divisor $P$ which lies over $p\in \bZ$ and $f(P|p)=1$. This means that $\abs{\quot{\Rbar}{P}}=p$. By Theorem \ref{ip intermediate cond}, we know that $R+P$ is an order in $K$ with conductor ideal $P$. However, the above lemma tells us that this is impossible, since $P$ has prime norm. Then any prime divisor $P$ of $I$ must have $f(P|p)>1$. 
	\end{proof}
	
	This result narrows down the possible conductor ideals of ideal-preserving orders, making such orders easier to find. For instance, it immediately tells us that $\bZ[2\sqrt{2}]$ is not an ideal-preserving order, since its conductor ideal is $I=(2)=(\sqrt{2})^2$, with $f((\sqrt{2})|2)=1$.
	
	\begin{theorem}
		Let $R$ be an order in a number field with conductor ideal $I$. If every prime divisor of $I$ is principal in $\Rbar$ and can be generated (as an $\Rbar$-ideal) by an element of $R$, then $R$ is ideal-preserving.
	\end{theorem}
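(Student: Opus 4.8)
The plan is to reduce everything to Condition (4) of Theorem \ref{ip equiv conditions} and then exhibit explicit witnesses. Because $R$ is an order in a number field, $T=\Rbar$ is a Dedekind domain containing $1\in R$, so that characterization is available: to conclude $R$ is ideal-preserving, I need only check that for prime $\Rbar$-ideals $P_1,P_2$ containing $I$ we have $R\cap P_1\nsubseteq P_1^2$, and, when $P_1\neq P_2$, that $R\cap P_1\nsubseteq P_2$. Since the prime ideals containing $I$ in a Dedekind domain are exactly the prime divisors of $I$, the hypothesis applies to every prime I must consider, and this reduction is where the conceptual content lies.

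For the witnesses, I would fix a prime divisor $P$ of $I$ and use the hypothesis to write $P=\pi\Rbar$ with $\pi\in R$; the key observation is that this single element settles both halves of Condition (4). First, $\pi\in R\cap P$, and $\pi\notin P^2=\pi^2\Rbar$, since $\pi=\pi^2 s$ would cancel (in the domain $\Rbar$) to $\pi s=1$, contradicting that $P$ is a proper prime ideal; this gives $R\cap P\nsubseteq P^2$. Second, for a distinct prime divisor $P_2\neq P$, I would note that $\pi\in P_2$ would force $P=\pi\Rbar\subseteq P_2$, and since nonzero primes in $\Rbar$ are maximal this yields $P=P_2$, a contradiction; hence $\pi\in R\cap P\backslash P_2$, giving $R\cap P\nsubseteq P_2$. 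Together these are precisely the two inequalities required by Condition (4).

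I do not anticipate a genuine obstacle here: once the correct characterization is invoked, the proof is a short verification, and the only real idea is recognizing that a generator of $P$ lying in $R$ does double duty, simultaneously witnessing membership in $R\cap P$ while avoiding $P^2$ and every other prime. The closest thing to a subtlety is ensuring the reduction is legitimate, namely that I only ever need Condition (4) for primes dividing $I$, to which the hypothesis genuinely applies; after that, invoking Theorem \ref{ip equiv conditions} completes the argument.
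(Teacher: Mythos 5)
Your proposal is correct and follows essentially the same route as the paper: both reduce to the condition that the prime divisors of $I$ stay distinct from each other and from their squares when intersected with $R$ (Condition (4) of Theorem \ref{ip equiv conditions}), and both use the generator $\pi\in R$ of a prime divisor as the single witness for both non-containments. Your cancellation argument for $\pi\notin P^2$ and the maximality argument for $\pi\notin P_2$ just spell out the paper's observation that $\pi$ lying in either ideal would force $P_1$ itself to be contained in it, which is impossible.
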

	
	\begin{proof}
		Assume that $I$ is as described in the theorem statement. Recall that to show that $R$ is ideal-preserving, we simply need to show that for any primes $P_1$ and $P_2$ dividing $I$, $R\cap P_1\nsubseteq P_2$ and $R\cap P_1\nsubseteq P_1^2$. By the assumption, such $P_1$ and $P_2$ are principal and generated by an element of $R$. Let $\pi\in R$ such that $P_1=\pi\Rbar$; then $\pi\in R\cap P_1$. However, if $\pi\in P_2$ or $\pi\in P_1^2$, that would imply that $P_1$ itself is contained in one of these ideals, a contradiction. Then $R\cap P_1\nsubseteq P_2$ and $R\cap P_1\nsubseteq P_1^2$, meaning that $R$ is ideal-preserving.
	\end{proof}
	
	This theorem has a strong assumption associated with it; that being said, it will be extremely useful to us in the case that $R$ is an order in a quadratic number field $K$. In fact, it will provide us with an easy-to-check equivalent characterization of ideal-preserving orders.
	
	\begin{theorem}
		\label{ip iff inert primes}
		Let $K=\bQ[\sqrt{d}]$ for some squarefree $d\in\bZ$ and $R$ be the index $n$ order in $K$. Then $R$ is ideal-preserving if and only if every rational prime divisor of $n$ is inert in $K$. 
	\end{theorem}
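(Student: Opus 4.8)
The plan is to reduce both implications to the two immediately preceding theorems, after pinning down the conductor ideal and the description of its prime divisors. First I would record that the index-$n$ order is $R=\bZ+n\Rbar$, whose conductor ideal is $I=(R:\Rbar)=n\Rbar$. Factoring $n\Rbar=\prod_{p\mid n}(p\Rbar)$ and then factoring each $p\Rbar$ into primes of $\Rbar$ lying over $p$, the prime divisors of $I$ are precisely the primes $P\subseteq\Rbar$ lying over the rational primes dividing $n$. I would also recall the trichotomy for a rational prime $p$ in the quadratic field $K$: it is split (with $f(P\mid p)=1$), ramified (with $f(P\mid p)=1$), or inert (with $f(P\mid p)=2$); in particular, since $[K:\bQ]=2$ forces $f(P\mid p)\le 2$, we have $f(P\mid p)>1$ if and only if $p$ is inert.

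For the forward direction, suppose $R$ is ideal-preserving and let $p\mid n$ be a rational prime. Choosing any prime $P$ of $\Rbar$ lying over $p$, the observation above shows $P$ is a prime divisor of $I=n\Rbar$. The preceding theorem (that an ideal-preserving order forces $f(P\mid p)>1$ for every prime divisor $P$ of its conductor) then gives $f(P\mid p)>1$, whence $f(P\mid p)=2$ in the quadratic setting, so $p$ is inert. This implication is therefore essentially an immediate application of that theorem together with the trichotomy.

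For the reverse direction, suppose every rational prime divisor of $n$ is inert. Then for each $p\mid n$ the ideal $p\Rbar$ is already prime, so the prime divisors of $I=n\Rbar$ are exactly the ideals $P=p\Rbar$ with $p\mid n$. Each such $P$ is principal in $\Rbar$ and is generated by $p\in\bZ\subseteq R$, i.e. by an element of $R$. I would then invoke the preceding theorem asserting that if every prime divisor of the conductor $I$ is principal in $\Rbar$ and generated by an element of $R$, then $R$ is ideal-preserving, which yields the conclusion.

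Since both implications reduce cleanly to the two prior results, there is no genuine computational obstacle; the only points requiring care are confirming that $I=n\Rbar$ and the resulting description of its prime divisors, and the passage from $f(P\mid p)>1$ to ``$p$ inert,'' which relies specifically on $[K:\bQ]=2$ bounding $f(P\mid p)$ by $2$. Accordingly, I would state the quadratic trichotomy explicitly at the outset so that the equivalence ``$f(P\mid p)>1 \iff p$ inert'' is transparent, after which each direction is a one-line citation of the appropriate preceding theorem.
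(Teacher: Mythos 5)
Your proposal is correct and follows essentially the same route as the paper: both directions are reduced to the two immediately preceding theorems (ideal-preserving forces inertial degree $f(P\mid p)>1$ for prime divisors of the conductor; conversely, prime divisors of the conductor that are principal and generated by elements of $R$ force ideal-preserving), using the quadratic trichotomy and the fact that the conductor is $n\Rbar$ with generators $p\in\bZ\subseteq R$. The only difference is cosmetic: the paper phrases the forward direction in terms of prime divisors of prime norm rather than inertial degree, which is the same statement.
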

	
	\begin{proof}
		First, assume that $R$ is ideal-preserving and let $p$ be a rational prime divisor of $n$. Since $K$ is a quadratic number field, $p$ must either be inert or be a product of two (not necessarily distinct) prime $\Rbar$-ideals of norm $p$. Since $R$ is ideal-preserving, then its conductor ideal $n\Rbar$ cannot have any prime divisors of prime norm. Thus, every rational prime dividing $n$ must be inert.
		
		For the reverse implication, assume that $R$ has index $n$ such that every rational prime divisor of $n$ is inert in $K$. Then every prime $\Rbar$-ideal dividing the conductor ideal $I=(n)$ is principal and of the form $P=(p)$, with $p\in\bZ\subseteq R$. Then the previous theorem tells us that $R$ is ideal-preserving.
	\end{proof}
	
	This result makes checking whether an order in a quadratic number field is ideal-preserving almost trivial, since we can check whether a rational prime $p$ is inert in the quadratic number field $\bQ[\sqrt{d}]$ by using the Legendre symbol (unless $p=2$, in which case we need only check whether $d\equiv 5\modulo{8}$). In the interest of completeness, we can now use this characterization to show in more detail some examples that were glossed over earlier. These examples will also give us an idea of the process we might use to find ideal-preserving or locally associated orders, which we will explore more in the next section.
	
	\begin{example}
		\label{ip not la}
		Let $R=\bZ[5\sqrt{2}]$, the index 5 order in $K=\bQ[\sqrt{2}]$. Then $R$ is an ideal-preserving order which is not locally associated (and therefore not associated). Using Theorem \ref{ip iff inert primes}, we note that $\kron{2}{5}=-1$ and thus $5$ is an inert prime in $K$. Then $R$ is ideal-preserving. The fundamental unit in $\Rbar$ is $u=1+\sqrt{2}$, and the smallest power of $u$ lying in $R$ is $u^3=7+5\sqrt{2}$. Then $R$ is not locally associated, since
		$$\abs{\quot{U(\Rbar)}{U(R)}}=3\neq 6=\frac{24}{4}=\frac{\abs{U(\quot{\Rbar}{I})}}{\abs{U(\quot{R}{I})}}.$$
	\end{example}
	
	\begin{example}
		\label{la not ip}
		Let $R=\bZ[2\sqrt{2}]$, the index 2 order in $K=\bQ[\sqrt{2}]$. Then $R$ is a locally associated order which is not ideal-preserving (and therefore not associated). Note that $(2)=(\sqrt{2})^2$, so 2 is not an inert prime in $K$. Then using Theorem \ref{ip iff inert primes}, $R$ is not ideal-preserving. The fundamental unit in $\Rbar$ is $u=1+\sqrt{2}$, and the smallest power of $u$ lying in $R$ is $u^2=3+2\sqrt{2}$. Then $R$ is locally associated, since
		$$\abs{\quot{U(\Rbar)}{U(R)}}=2=\frac{2}{1}=\frac{\abs{U(\quot{\Rbar}{I})}}{\abs{U(\quot{R}{I})}}.$$
	\end{example}
	
	\section{Finding Associated, Ideal-Preserving, and Locally Associated Orders}
	As we have seen throughout this paper, associated, ideal-preserving, and locally associated orders are objects of interest with notable properties. Specifically, as we saw in Theorems \ref{halter}, \ref{rago hfd}, and \ref{elasticities equal}, these properties can give useful information about the elasticity of an order. It might be useful, then, to be able to find large numbers of these orders quickly or, when possible, find easy-to-use characterizations of where these orders can be found.
	
	For the purposes of this paper, we will restrict our search to the simplest orders: those lying in quadratic number fields. These orders are relatively easy to work with and thus have been well-explored historically. The study of factorization in these orders is of ongoing interest, including in \cite{choi2024class} and \cite{pollack}; the work in this section continues in this tradition. Orders in number fields of higher degree are generally more difficult to work with and represent one direction in which this research will naturally move in the future. For a brief handling of some special cases and discussion of how one might proceed in this more complex case, see \cite{dissertation}.
	
	As we saw in Theorem \ref{ip iff inert primes}, ideal-preserving orders are easy to characterize in the quadratic case. Since associated orders are simply orders which are both ideal-preserving and locally associated by Corollary \ref{ao iff ipo and lao}, this means we really only need to be concerned with identifying locally associated orders in the quadratic case. In this interest, we define the following function.
	
	\begin{definition}
		\label{l function}
		We define the function $L:\bN\times \bZ\to \bN$ as follows:
		\begin{enumerate}
			\item $L(1,d)=1$ for every $d\in\bZ$.
			\item For any $a\in\bN$:$$L(2^a,d)=\begin{cases} 2^{a-1},&d\equiv 1\modulo{8};\\3\cdot 2^{a-1},&d\equiv 5\modulo{8};\\2^a,&\ow.\end{cases}$$
			\item For any $a\in\bN$ and odd prime $p$, $L(p^a,d)=p^{a-1}\left(p-\kron{d}{p}\right)$.
			\item If $m,n\in\bN$ are coprime, then $L(mn,d)=L(m,d)\cdot L(n,d)$.
		\end{enumerate}
	\end{definition}
	
	In other words, if we fix $d\in\bZ$, the function $L(\cdot,d)$ is a multiplicative arithmetic function. The following theorem demonstrates the usefulness of this function.
	
	\begin{theorem}
		Let $R$ be the index $n$ order in the quadratic number field $K=\bQ[\sqrt{d}]$, where $d\in\bZ$ is squarefree. Then
		$$\frac{\abs{U(\quot{\Rbar}{I})}}{\abs{U(\quot{R}{I})}}=L(n,d).$$
	\end{theorem}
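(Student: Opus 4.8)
The plan is to reduce the computation to prime powers via multiplicativity and then to analyze each prime power according to the splitting behavior of the underlying rational prime.

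First I would pin down the relevant rings. Since $R$ is the index $n$ order, $R=\bZ+n\Rbar$ and its conductor is $I=(R:\Rbar)=n\Rbar$. With these identifications the denominator becomes transparent: the inclusion $\bZ\hookrightarrow R$ induces a surjection $\bZ\to R/I$ whose kernel is $n\Rbar\cap\bZ=n\bZ$ (using $\Rbar\cap\bQ=\bZ$), so $R/I\cong\bZ/n\bZ$ and hence $\abs{U(\sfrac{R}{I})}=\phi(n)$, the Euler totient. For the numerator, $\Rbar/I=\Rbar/n\Rbar$, and since $\Rbar$ is free of rank $2$ over $\bZ$, the Chinese Remainder Theorem (the ideals $p^a\Rbar$ for distinct $p\mid n$ are pairwise comaximal with intersection $n\Rbar$) gives $\Rbar/n\Rbar\cong\prod_{p^a\|n}\Rbar/p^a\Rbar$, so $\abs{U(\sfrac{\Rbar}{I})}=\prod_{p^a\|n}\abs{U(\sfrac{\Rbar}{p^a\Rbar})}$.

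Both $\phi(n)$ and this product are multiplicative in $n$ over coprime factorizations, matching exactly condition (4) in Definition \ref{l function} (with the case $n=1$, where $R=\Rbar$ and both quotients are the zero ring, giving the empty product $1=L(1,d)$). It therefore suffices to verify the identity for a prime power $n=p^a$, i.e. to show $\abs{U(\sfrac{\Rbar}{p^a\Rbar})}/\phi(p^a)=L(p^a,d)$. Here I would split into the three ways $p$ can behave in $K$. If $p$ is inert, $\Rbar/p^a\Rbar$ is local with residue field $\bF_{p^2}$ and total size $p^{2a}$, so its unit count is $p^{2a}-p^{2a-2}=p^{2a-2}(p^2-1)$, and dividing by $\phi(p^a)=p^{a-1}(p-1)$ yields $p^{a-1}(p+1)$. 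If $p$ splits, $\Rbar/p^a\Rbar\cong(\Rbar/P_1^a)\times(\Rbar/P_2^a)$ with each factor local over $\bF_p$, giving unit count $[p^{a-1}(p-1)]^2$ and ratio $p^{a-1}(p-1)$. If $p$ ramifies, $\Rbar/p^a\Rbar=\Rbar/P^{2a}$ is local over $\bF_p$ of size $p^{2a}$, giving unit count $p^{2a-1}$ and ratio $p^a$. The final step is to match these three outcomes with the defining cases of $L$: for odd $p$ this is immediate from the standard characterization of splitting via the Legendre symbol ($\kron{d}{p}=1,-1,0$ for split, inert, ramified), which reproduces $p^{a-1}\left(p-\kron{d}{p}\right)$; for $p=2$ one invokes the classification of the behavior of $2$ (split, inert, ramified according to $d\equiv 1\modulo{8}$, $d\equiv 5\modulo{8}$, or otherwise), which reproduces the three-case formula for $L(2^a,d)$.

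The main obstacle I anticipate is the prime $p=2$: it must be handled separately from the odd primes, carefully tracking whether $\Rbar=\bZ[\sqrt{d}]$ or $\Rbar=\bZ[\frac{1+\sqrt{d}}{2}]$ and using the correct congruence conditions on $d$ modulo $8$ to determine the splitting type. The local unit-counting itself is routine once the splitting type is known; the real content lies entirely in correctly correlating the arithmetic of $p$ in $K$ with the congruence and symbol data encoded in the definition of $L$.
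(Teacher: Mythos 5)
Your proposal is correct and takes essentially the same approach as the paper: identify $\abs{U(\sfrac{R}{I})}=\phi(n)$ from $\quot{R}{I}\cong\bZ/n\bZ$, reduce to prime powers $n=p^a$ by multiplicativity/CRT, count units in $\quot{\Rbar}{p^a\Rbar}$ according to whether $p$ is inert, split, or ramified, and match the three outcomes to the cases of $L(n,d)$ via the Legendre symbol for odd $p$ and the mod-$8$ classification for $p=2$. One small slip worth fixing: in the ramified case the unit count is $p^{2a}-p^{2a-1}=p^{2a-1}(p-1)$, not $p^{2a-1}$, though the ratio $p^a$ you state is correct.
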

	
	\begin{proof}
		First, if $n=1$, then $\Rbar=R=I$, so it is clear that
		$$\frac{\abs{U(\quot{\Rbar}{I})}}{\abs{U(\quot{R}{I})}}=L(1,d)=1.$$
		Now for any $n\geq 2$, note that $R=\bZ+n\Rbar=\bZ+I$. Then $\quot{R}{I}\cong \bZ_n$, the ring of integers modulo $n$; in particular, this means that $\abs{U(\quot{R}{I})}=\abs{U(\bZ_n)}=\phi(n)$, where $\phi$ is Euler's totient function. In all cases, this covers the denominator of the fraction that we are considering; we now need to focus on the numerator.
		
		Suppose that $p$ is a rational prime and $n=p^a$ for $a\in\bN$. Then the structure of $\quot{\Rbar}{I}=\quot{\Rbar}{(n)}$ will depend on how the rational prime $p$ decomposes into prime ideals in $K$. If $p$ is an inert prime, i.e. $P=(p)$ is a prime ideal: $$\abs{U(\quot{\Rbar}{I})}=\abs{\quot{\Rbar}{I}}-\abs{\quot{P}{I}}=p^{2a}-p^{{2(a-1)}}=p^{2(a-1)}(p^2-1).$$
		If $p$ is a split prime, i.e. $(p)=PQ$ for distinct prime ideals $P,Q$:
		$$\abs{U(\quot{\Rbar}{I})}=\abs{U(\quot{\Rbar}{P^a})}\cdot \abs{U(\quot{\Rbar}{Q^a})}=\left(\abs{\quot{\Rbar}{P^a}}-\abs{\quot{P}{P^a}}\right)^2=(p^a-p^{a-1})^2=p^{2(a-1)}(p-1)^2.$$
		Finally, if $p$ is ramified, i.e. $(p)=P^2$ for some prime ideal $P$:
		$$\abs{U(\quot{\Rbar}{I})}=\abs{\quot{\Rbar}{I}}-\abs{\quot{P}{I}}=p^{2a}-p^{{2a-1}}=p^{2a-1}(p-1).$$
		
		Then if $n=2^a$ for some $a\in\bN$, we can use the fact that $2$ is inert in $K$ if and only if $d\equiv 5\modulo{8}$; 2 is split in $K$ if and only if $d\equiv 1\modulo{8}$; and 2 is ramified in $K$ otherwise. Then using the values of $\abs{U(\quot{\Rbar}{I})}$ and $\abs{U(\quot{R}{I})}$ found above:
		$$\frac{\abs{U(\quot{\Rbar}{I})}}{\abs{U(\quot{R}{I})}}=L(2^a,d)=\begin{cases} 2^{a-1},&d\equiv 1\modulo{8};\\3\cdot 2^{a-1},&d\equiv 5\modulo{8};\\2^a,&\ow.\end{cases}.$$
		Similarly, note that for an odd prime $p$, $p$ is inert in $K$ if and only if $\kron{d}{p}=-1$; $p$ is split in $K$ if and only if $\kron{d}{p}=1$; and $p$ is split in $K$ if and only if $\kron{d}{p}=0$ (i.e. iff $p|d$). Then if $n=p^a$ for some odd prime $p$ and $a\in\bN$:
		$$\frac{\abs{U(\quot{\Rbar}{I})}}{\abs{U(\quot{R}{I})}}=L(p^a,d)=p^{a-1}\left(p-\kron{d}{p}\right).$$
		
		Finally, assume that $n=m_1m_2$ for relatively prime integers $m_1$, $m_2$, and assume that we have shown the result for the index $m_1$ and $m_2$ orders in $K$. Then since the ideals $(m_1)$ and $(m_2)$ are relatively prime in $\Rbar$, the Chinese Remainder Theorem tells us that $\abs{U(\quot{\Rbar}{I})}=\abs{U(\quot{\Rbar}{(m_1)})}\cdot \abs{U(\quot{\Rbar}{(m_2)})}$. Similarly, $\abs{U(\quot{R}{I})}=\phi(n)=\phi(m_1)\phi(m_2)$. Then:
		$$\frac{\abs{U(\quot{\Rbar}{I})}}{\abs{U(\quot{R}{I})}}=\frac{\abs{U(\quot{\Rbar}{(m_1)})}}{\phi(m_1)}\cdot \frac{\abs{U(\quot{\Rbar}{(m_2)})}}{\phi(m_2)}=L(m_1,d)\cdot L(m_2,d)=L(n,d).$$
		Then by inducting on the prime factorization of $n$, the result holds for every $n\in\bN$ and squarefree $d\in\bZ$ (and thus for every quadratic order).
	\end{proof}
	
	Now recall from Dirichlet's Unit Theorem that for a quadratic number field $K=\bQ[\sqrt{d}]$, the unit group in the ring of integers is either a finite cyclic group (if $d<0$; i.e. if $K$ is non-real) or $\{\pm u^k|k\in\bZ\}$ for some fundamental unit $u$ (if $d>0$; i.e. if $K$ is real). Then since $\pm 1\in R$, it is immediate that $\abs{\quot{U(\Rbar)}{U(R)}}$ is equal to the minimal power of the fundamental unit $u$ lying in $R$ (if $d<0$, we will refer to the generator of the finite cyclic unit group as the fundamental unit). This gives us the following corollary.
	
	\begin{corollary}
		\label{la iff l function}
		Let $R$ be the index $n$ order in the quadratic number field $K=\bQ[\sqrt{d}]$, where $d\in\bZ$ is squarefree, and let $u\in U(\Rbar)$ be the fundamental unit (if $d<0$, we will refer to the cyclic generator of $U(\Rbar)$ as the fundamental unit). Then if $m\in\bN$ is minimal such that $u^m\in R$, then $m|L(n,d)$. Moreover, $R$ is locally associated if and only if $m=L(n,d)$.
	\end{corollary}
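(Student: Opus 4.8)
The plan is to combine the formula just established, $\frac{\abs{U(\quot{\Rbar}{I})}}{\abs{U(\quot{R}{I})}}=L(n,d)$, with the unit-group analysis given in the paragraph preceding the statement and with Condition~4 of Theorem~\ref{la equiv conditions}. The first step is to pin down $m$ group-theoretically. By Dirichlet's Unit Theorem we may write $U(\Rbar)=\{\pm u^k : k\in\bZ\}$, and since $\pm 1\in R$ we get $U(R)=\{\pm u^{km} : k\in\bZ\}$, where $m$ is the least positive power of $u$ landing in $R$; hence $\abs{\quot{U(\Rbar)}{U(R)}}=m$. This is precisely the identification already noted above the corollary, and it converts both assertions into statements purely about the index $m$ of $U(R)$ in $U(\Rbar)$.

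For the divisibility $m\mid L(n,d)$, I would invoke the injective homomorphism $\phi:\quot{U(\Rbar)}{U(R)}\to\quot{U(\sfrac{\Rbar}{I})}{U(\sfrac{R}{I})}$ constructed in the proof of Proposition~\ref{la first conditions}. Since $\phi$ is injective, its image is a subgroup of the codomain isomorphic to the domain, so by Lagrange's theorem the order of the domain divides the order of the codomain. The codomain has order $\frac{\abs{U(\quot{\Rbar}{I})}}{\abs{U(\quot{R}{I})}}=L(n,d)$ by the theorem immediately above the corollary, and combining these gives $m=\abs{\quot{U(\Rbar)}{U(R)}}\mid L(n,d)$.

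For the ``moreover'' claim, the point is that $\phi$ is onto exactly when $R$ is locally associated. By Condition~4 of Theorem~\ref{la equiv conditions} (applicable since $R$ is a quadratic order and $T=\Rbar$), $R$ is locally associated if and only if $\abs{\quot{U(\Rbar)}{U(R)}}=\frac{\abs{U(\quot{\Rbar}{I})}}{\abs{U(\quot{R}{I})}}$, that is, if and only if $m=L(n,d)$. Equivalently, since $\phi$ is an injection of finite groups it is an isomorphism precisely when domain and codomain have equal order, which is again the equality $m=L(n,d)$.

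There is essentially no deep obstacle here: once $m$ is identified with the index $\abs{\quot{U(\Rbar)}{U(R)}}$ and $L(n,d)$ with the order of the codomain of $\phi$, both conclusions fall out from Lagrange's theorem together with the injectivity (respectively surjectivity) of $\phi$. The only points requiring care are checking that $U(\sfrac{R}{I})$ genuinely sits inside $U(\sfrac{\Rbar}{I})$ as a subgroup, so that the quotient and its order are well-defined, and confirming the finiteness of all the groups involved so that the Lagrange and injective-implies-isomorphism arguments apply; both hold because $R$ is an order in a number field.
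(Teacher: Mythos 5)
Your proof is correct, and its skeleton matches the paper's: both arguments identify $m$ with $\abs{\quot{U(\Rbar)}{U(R)}}$ via Dirichlet's Unit Theorem and then obtain the ``moreover'' claim from Condition 4 of Theorem \ref{la equiv conditions} combined with the formula $\frac{\abs{U(\quot{\Rbar}{I})}}{\abs{U(\quot{R}{I})}}=L(n,d)$ from the preceding theorem. The one place you genuinely diverge is the divisibility claim $m\mid L(n,d)$: the paper gets it by citing Lemma \ref{exact sequence}, deferring to the proof of that lemma in \cite{thesis} for the fact that $\abs{\quot{U(\Rbar)}{U(R)}}$ divides $\frac{\abs{U(\sfrac{\Rbar}{I})}}{\abs{U(\sfrac{R}{I})}}$, whereas you derive it internally from the injective homomorphism $\phi:\quot{U(\Rbar)}{U(R)}\to\quot{U(\sfrac{\Rbar}{I})}{U(\sfrac{R}{I})}$ constructed in Proposition \ref{la first conditions}, together with Lagrange's theorem. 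Your route is self-contained and arguably cleaner: it reuses the very map the paper already exploits to prove $1\iff 4$ in Theorem \ref{la equiv conditions}, and it makes transparent that the divisibility and the ``if and only if'' are nothing but the injectivity and the surjectivity of one and the same map; as a side benefit, the embedding into a finite group also shows that the index $\abs{\quot{U(\Rbar)}{U(R)}}$ is finite, i.e.\ that $m$ exists at all, a point both you and the paper otherwise leave implicit. The paper's route, by contrast, ties the statement to the class-number formula of Lemma \ref{exact sequence}, at the cost of an external citation. Your closing caveats---that $U(\sfrac{R}{I})$ genuinely embeds as a subgroup of $U(\sfrac{\Rbar}{I})$ and that all groups involved are finite---are exactly the right hypotheses to check, and both hold for orders in a number field, as the paper notes in the proof of Theorem \ref{la equiv conditions}.
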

	
	\begin{proof}
		Recall from Dirichlet's Unit Theorem that for a quadratic number field $K=\bQ[\sqrt{d}]$, the unit group in the ring of integers is either a finite cyclic group (if $d<0$; i.e. if $K$ is non-real) or $\{\pm u^k|k\in\bZ\}$ for some fundamental unit $u$ (if $d>0$; i.e. if $K$ is real). Then it is immediate that $\abs{\quot{U(\Rbar)}{U(R)}}=m$. By Condition 4 in Theorem \ref{la equiv conditions}, this tells us that $R$ is locally associated if and only if $m=L(n,d)$. The fact that $m|L(n,d)$ is a consequence of Lemma \ref{exact sequence}. In particular, in the proof of this lemma found in \cite{thesis}, it was shown that for any order $R$ in a number field $K$, $\abs{\quot{U(\Rbar)}{U(R)}}$ must divide $\frac{\abs{U(\sfrac{\Rbar}{I})}}{\abs{U(\sfrac{R}{I})}}.$
	\end{proof}
	
	This gives us a simple way to check whether an order $R$ in a quadratic number field $K$ is locally associated. First, we calculate $L(n,d)$. Then, we identify the fundamental unit $u\in U(\Rbar)$. Finally, we find the minimal power of $u$ lying in $R$ and compare this minimal power with $L(n,d)$.
	
	Now let us focus on the case when $d<0$; i.e. when $K$ is a non-real quadratic order. By Dirichlet's Unit Theorem, any such field has finite unit group $U(\cO_K)$ consisting only of roots of 1. In particular, when $d=-1$, $U(\bZ[i])=\{\pm 1,\pm i\}$; when $d=-3$ and $\omega=\frac{-1+\sqrt{-3}}{2}$, $U(\bZ[\omega])=\{\pm 1, \pm \omega, \pm \omega^2\}$; and when $d\notin\{-1,-3\}$, $U(\cO_K)=\{\pm 1\}$. Furthermore, if $R$ is the index $n$ order in $K$ with $n>1$, $U(R)=\{\pm 1\}$ (since only the maximal orders in $\bQ[i]$ and $\bQ[\omega]$ contain the fundamental unit). Then in any order $R$ in a non-real quadratic number field, we immediately know the value of $\abs{\quot{U(\Rbar)}{U(R)}}$ and can use this to determine when $R$ is locally associated.
	
	\begin{theorem}
		\label{la non-real quad}
		Let $R$ be the order of index $n>1$ in the quadratic number field $K=\bQ[\sqrt{d}]$, where $d<0$ is squarefree. Then $R$ is locally associated if and only if one of the following conditions holds:
		\begin{enumerate}
			\item $d\equiv 1\modulo{8}$ and $n=2$;
			\item $d=-1$ and $n=2$;
			\item $d=-3$ and $n\in\{2,3\}$.
		\end{enumerate}
	\end{theorem}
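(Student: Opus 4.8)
The plan is to reduce the whole statement to Corollary \ref{la iff l function}, which says that the index-$n$ order $R$ is locally associated if and only if $m = L(n,d)$, where $m$ is the least positive integer with $u^m\in R$ for the fundamental unit $u$ of $\Rbar$. Since $n > 1$, the description of the unit groups recorded just before the theorem gives $U(R) = \{\pm 1\}$, whence $m = \abs{\quot{U(\Rbar)}{U(R)}} = |U(\Rbar)|/2$. First I would read off the three possible values of $m$ from Dirichlet's Unit Theorem: $m = 2$ when $d = -1$ (since $U(\Rbar) = \{\pm 1, \pm i\}$), $m = 3$ when $d = -3$ (since $U(\Rbar) = \{\pm 1, \pm \omega, \pm \omega^2\}$), and $m = 1$ for every other squarefree $d < 0$ (since then $U(\Rbar) = \{\pm 1\}$). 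The theorem thus reduces to solving the single equation $L(n,d) = m$ in each of these three regimes.

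The engine of the argument is a lower-bound analysis of the multiplicative function $L(\cdot, d)$ on prime powers, using Definition \ref{l function}. For an odd prime $p$ and $a \geq 1$,
$$L(p^a, d) = p^{a-1}\left(p - \kron{d}{p}\right) \geq p - 1 \geq 2,$$
and equality $L(p^a,d) = 2$ forces $p = 3$, $a = 1$, and $\kron{d}{3} = 1$. For the prime $2$, the three branches of the definition give $L(2^a, d) \geq 2^{a-1}$, and $L(2^a, d) = 1$ is possible only for $a = 1$ with $d \equiv 1 \modulo{8}$. Combining these bounds with the multiplicativity of $L(\cdot, d)$ over the prime factorization of $n$ lets me solve $L(n,d) = m$ case by case.

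In the regime $d \notin \{-1, -3\}$, where $m = 1$, every prime-power factor of $L(n,d)$ is at least $2$ except $L(2,d)$ when $d \equiv 1 \modulo{8}$; so a product equal to $1$ with $n > 1$ forces $n = 2$ and $d \equiv 1 \modulo{8}$, which is condition (1) (and this congruence already rules out $d \in \{-1, -3\}$). When $d = -1$, I would use $-1 \equiv 7 \modulo{8}$ to get $L(2^a, -1) = 2^a$ and $L(p^a, -1) \geq 4$ for every odd prime $p$, so $L(n, -1) = 2$ holds only for $n = 2$, giving condition (2). When $d = -3$, I would use $-3 \equiv 5 \modulo{8}$ to get $L(2^a, -3) = 3\cdot 2^{a-1}$, together with $\kron{-3}{3} = 0$ giving $L(3^a, -3) = 3^a$ and $L(p^a, -3) \geq 4$ for odd $p \neq 3$; since no prime-power factor is smaller than $3$, the equation $L(n, -3) = 3$ forces exactly one factor equal to $3$, i.e.\ $n \in \{2, 3\}$, which is condition (3). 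I would close by noting that the three conditions are mutually exclusive, so they genuinely partition the locally associated orders of index $n>1$.

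The main obstacle is the prime-power lower-bound step: one must check that $L(p^a, d)$ can never dip low enough to produce stray solutions, with special care for the prime $3$ (the only odd prime for which $L(p,d)$ can equal $2$) and for the prime $2$, whose contribution is dictated through the splitting type of $2$, governed by $d$ modulo $8$. Once those bounds are secured, each of the three cases closes by elementary multiplicativity, so the substance of the proof is careful bookkeeping of Legendre-symbol and congruence conditions rather than any hard computation.
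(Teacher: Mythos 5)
Your proposal is correct and follows essentially the same route as the paper: both reduce the statement to Corollary \ref{la iff l function}, read off $m=1,2,3$ in the three regimes $d\notin\{-1,-3\}$, $d=-1$, $d=-3$ from the unit groups, and then solve $L(n,d)=m$ using Definition \ref{l function}. The only difference is presentational: where the paper settles the equation $L(n,d)=m$ ``by inspection,'' you supply explicit prime-power lower bounds for $L(p^a,d)$, which makes that enumeration rigorous but does not change the argument.
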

	
	\begin{proof}
		First, note that if $d\notin\{-1,-3\}$, then the fundamental unit in $\Rbar$ is $-1$. Then the minimal power $m$ such that $(-1)^m\in R$ is $m=1$ (since $-1\in\bZ\subseteq R$). Then $R$ is locally associated if and only if $L(n,d)=1$; by inspection`, this is true if and only if $n=1$ or $n=2$ and $d\equiv 1\modulo{8}$.
		
		Now assume that $d=-1$; then the fundamental unit in $\Rbar$ is $i$. Since $i$ is only in the maximal order $R=\Rbar$, then for $n>2$, the minimal power $m$ of $i$ lying in $R$ is $m=2$. By inspection, $L(n,d)=2$ is only possible when $n=2$ and $d\not\equiv 1\modulo{4}$; when $n=4$ and $d\equiv 1\modulo{8}$; when $n=3$ and $\kron{d}{3}=1$; and when $n=6$ and both $d\equiv 1\modulo{8}$ and $\kron{d}{3}=1$. Since $d=-1\equiv 7\modulo{8}$ and $\kron{-1}{3}=-1$, then the only order in $K$ which is locally associated is that of order $n=2$.
		
		Finally, assume that $d=-3$; then the fundamental unit in $\Rbar$ is $u=\frac{1+\sqrt{-3}}{2}$. Since $u$ is only in the maximal order $R=\Rbar$, then for any $n>2$, the minimal power $m$ of $u$ lying in $R$ is $m=3$. By inspection, $L(n,d)=3$ is only possible when $n=2$ and $d\equiv 5\modulo{8}$; when $p=3$ and $\kron{d}{3}=0$; and when $n=6$ and both $\kron{d}{3}=0$ and $d\equiv 1\modulo{8}$. Since $d=-3\equiv 5\modulo{8}$ and $\kron{-3}{3}=0$, then the orders in $K$ which are locally associated are exactly those of order $n=2$ and $n=3$.
	\end{proof}
	
	Keeping in mind Corollary \ref{ao iff ipo and lao} and Theorem \ref{ip iff inert primes}, this immediately yields the following.
	
	\begin{corollary}
		\label{associated non-real quad}
		Let $R$ be the order of index $n>1$ in the quadratic number field $K=\bQ[\sqrt{d}]$, where $d<0$ is squarefree. Then $R$ is associated if and only if $d=-3$ and $n=2$.
	\end{corollary}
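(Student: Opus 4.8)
The plan is to invoke Corollary \ref{ao iff ipo and lao}, which tells us that for an order $R$ in a number field, being associated is equivalent to being simultaneously ideal-preserving and locally associated. Since Theorem \ref{la non-real quad} already supplies a complete list of the locally associated orders of index $n>1$ in a non-real quadratic field, it suffices to run through each of the three cases on that list and determine which additionally satisfy the ideal-preserving condition of Theorem \ref{ip iff inert primes}, namely that every rational prime dividing $n$ be inert in $K$. In effect the corollary reduces the question to intersecting the two already-classified families.

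First I would dispatch Case (1) of Theorem \ref{la non-real quad}, where $d\equiv 1\modulo{8}$ and $n=2$. The only prime dividing $n$ is $2$, but $d\equiv 1\modulo{8}$ forces $2$ to split rather than be inert, so by Theorem \ref{ip iff inert primes} the order $R$ fails to be ideal-preserving and hence is not associated. Next I would handle Case (2), $d=-1$ and $n=2$: here $d=-1\equiv 7\modulo{8}$, so $2$ is ramified, again not inert, and $R$ is not associated.

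Finally I would examine Case (3), $d=-3$ with $n\in\{2,3\}$, splitting into its two subcases. When $n=3$, the prime $3$ divides $d=-3$, so $\kron{-3}{3}=0$ and $3$ ramifies rather than being inert; thus this order is not ideal-preserving and not associated. When $n=2$, however, $d=-3\equiv 5\modulo{8}$ forces $2$ to be inert, so $R$ is ideal-preserving, and since Theorem \ref{la non-real quad} already guarantees it is locally associated, Corollary \ref{ao iff ipo and lao} makes it associated. This leaves $d=-3$, $n=2$ as the unique associated order, exactly as claimed.

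The argument is entirely mechanical once the two prior theorems are in hand, so there is no substantive obstacle beyond careful bookkeeping. The one point demanding attention is the correct reading of the splitting behavior of $2$ from the residue of $d$ modulo $8$ (inert iff $d\equiv 5$, split iff $d\equiv 1$, ramified otherwise), since this is precisely what distinguishes the surviving case $d=-3\equiv 5\modulo{8}$ from the eliminated cases $d\equiv 1\modulo{8}$ and $d=-1\equiv 7\modulo{8}$.
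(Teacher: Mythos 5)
Your proposal is correct and is exactly the argument the paper intends: the corollary is stated as an immediate consequence of Corollary \ref{ao iff ipo and lao}, Theorem \ref{ip iff inert primes}, and Theorem \ref{la non-real quad}, and you have simply carried out the case-by-case intersection of the locally associated list with the inert-prime condition. Your bookkeeping of the splitting behavior of $2$ (inert iff $d\equiv 5\modulo{8}$, split iff $d\equiv 1\modulo{8}$, ramified otherwise) and of $3$ ramifying when $d=-3$ matches the paper's conventions, so nothing is missing.
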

	
	Recall that in this case, $\Rbar=\bZ[\frac{1+\sqrt{-3}}{2}]$ is an HFD. Using Theorem \ref{halter}, this gives an alternate proof of a result originally from \cite{coykendallhfd}.
	
	\begin{corollary}
		Let $R=\bZ[\sqrt{-3}]$, the index $2$ order in the number field $K=\bQ[\sqrt{-3}]$. Then $R$ is an HFD. Moreover, this is the only non-maximal half-factorial order in a non-real quadratic number field.
	\end{corollary}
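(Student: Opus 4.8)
The plan is to deduce both assertions directly from the Halter-Koch theorem (Theorem \ref{halter}) together with Corollary \ref{associated non-real quad}, both already in hand. Recall that Theorem \ref{halter} characterizes an index-$n$ order ($n>1$) in $\bQ[\sqrt{d}]$ as an HFD precisely when three conditions hold: $\Rbar$ is an HFD, $\Rbar=R\cdot U(\Rbar)$ (that is, $R$ is an associated order), and $n$ is prime or twice an odd prime.

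First I would verify these three conditions for $R=\bZ[\sqrt{-3}]$. Here $\Rbar=\cO_K=\bZ[\omega]$ with $\omega=\frac{1+\sqrt{-3}}{2}$, the ring of Eisenstein integers, which is a PID of class number one and hence a UFD; by Theorem \ref{davenport elasticity} (equivalently, since $\abs{\Cl(\Rbar)}=1\le 2$) this gives condition (1). Condition (2) is exactly the assertion that $R$ is an associated order, which holds by Corollary \ref{associated non-real quad} because $d=-3$ and $n=2$. Condition (3) is immediate since $n=2$ is prime. Theorem \ref{halter} then yields that $R$ is an HFD.

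For the uniqueness claim I would argue through condition (2). Let $R'$ be any non-maximal order of index $n>1$ in a non-real quadratic field $\bQ[\sqrt{d}]$ with $d<0$ squarefree, and suppose $R'$ is an HFD. By Theorem \ref{halter}, all three Halter-Koch conditions must then hold for $R'$; in particular, condition (2) forces $R'$ to be an associated order. But Corollary \ref{associated non-real quad} classifies the associated orders in the non-real quadratic case completely: the only one is the index-$2$ order in $\bQ[\sqrt{-3}]$. Hence $d=-3$ and $n=2$, so $R'=\bZ[\sqrt{-3}]=R$, establishing uniqueness.

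Once the machinery is assembled, the proof meets essentially no obstruction; the real content has already been absorbed into Corollary \ref{associated non-real quad}, which shows that being an associated order is so restrictive in the non-real quadratic setting that it singles out a single ring. The only routine fact to confirm is that the Eisenstein integers form a UFD (equivalently, have class number one), which verifies Halter-Koch's first condition in the surviving case. Thus the genuine difficulty lies not in this corollary but in the earlier classification, and here we merely harvest its consequence.
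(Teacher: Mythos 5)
Your proposal is correct and follows essentially the same route as the paper: the paper also obtains this corollary by combining Theorem \ref{halter} with Corollary \ref{associated non-real quad}, noting that $\Rbar=\bZ[\frac{1+\sqrt{-3}}{2}]$ is an HFD, so that condition (2) of Halter-Koch both certifies $\bZ[\sqrt{-3}]$ as an HFD and, via the classification of associated orders in the non-real quadratic case, rules out every other non-maximal order.
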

	
	With the non-real quadratic case completely taken care of, we can now focus on the real quadratic case; that is, orders $R$ in a number field $K=\bQ[\sqrt{d}]$ where $d$ is a squarefree positive integer. In this case, recall that the unit group $U(\Rbar)$ has a single fundamental unit $u$ and the order of index $n$ in $K$ is locally associated if and only if $m=L(n,d)$, where $m$ is the minimal power of $u$ lying in $R$. This makes checking whether such an order is locally associated (and thus if it is associated) relatively straightforward; however, for large values of $n$ and $d$, it may be computationally infeasible to do so.
	
	In order to identify a large number of orders which are associated, ideal-preserving, and locally associated, MATLAB programs were written to systematically search through orders and identify their properties. Originally, this was done in \cite{dissertation}, producing a table of orders of index $n\leq 30$ in quadratic number fields $K=\bQ[\sqrt{d}]$ with $d<30$. This code has since been modified and improved upon, using SageMath when necessary to handle numbers larger than those MATLAB could work with. This improved code has produced a table identifying every order of index $n\leq 10000$ in quadratic number fields $K=\bQ[\sqrt{d}]$ with $d<1000$ as being associated, ideal-preserving, locally associated, or none of the above. Using Theorem \ref{halter}, it was then possible to determine which of these orders were specifically half-factorial. The resulting table identifying these orders, along with the code used to produce this table, can be found at \cite{quadla}. From this table, we get the following figure.
	
	\begin{proposition}
		There exist exactly $29$,$163$ half-factorial orders of index $1<n<=10000$ in quadratic number fields $K=\bQ[\sqrt{d}]$ with $1<d<1000$.
	\end{proposition}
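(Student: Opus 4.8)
The plan is to reduce the half-factorial property to a finite list of effectively checkable arithmetic conditions and then tally all pairs $(n,d)$ that pass them. The starting point is Theorem \ref{halter}: the index $n$ order $R$ in $K=\bQ[\sqrt{d}]$ is an HFD if and only if (i) $\Rbar=\cO_K$ is an HFD, (ii) $\Rbar=R\cdot U(\Rbar)$, and (iii) $n$ is prime or twice an odd prime. Each condition rephrases cleanly using the machinery above. For (i), Theorem \ref{davenport elasticity} shows that $\Rbar$ is an HFD precisely when $\abs{\Cl(\Rbar)}\leq 2$. For (ii), the relation $\Rbar=R\cdot U(\Rbar)$ is exactly the assertion that $R$ is an associated order, which by Corollary \ref{ao iff ipo and lao} holds if and only if $R$ is both ideal-preserving and locally associated.

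Next I would make the two halves of (ii) explicit. By Theorem \ref{ip iff inert primes}, $R$ is ideal-preserving if and only if every rational prime dividing $n$ is inert in $K$---a condition decided by a Legendre symbol, with the customary separate handling of $p=2$ via the residue of $d$ modulo $8$. By Corollary \ref{la iff l function}, $R$ is locally associated if and only if $m=L(n,d)$, where $m$ is the least positive integer with $u^m\in R$ for the fundamental unit $u\in U(\Rbar)$, and $L$ is the explicit multiplicative function of Definition \ref{l function}. Thus the full half-factorial test for a single pair $(n,d)$ splits into four steps: verify that $n$ is prime or twice an odd prime; verify that $\abs{\Cl(\cO_K)}\leq 2$; verify inertia of each prime divisor of $n$; and compare the minimal unit power $m$ with $L(n,d)$.

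With this reduction in hand, the proof amounts to enumerating every squarefree $d$ with $1<d<1000$ and every $n$ with $1<n\leq 10000$, applying the four tests to each pair, and counting the survivors. The hard part will be purely computational rather than conceptual: for large $d$ one must compute the class number $\abs{\Cl(\cO_K)}$ and the fundamental unit $u$, and for each order the minimal power $m$ with $u^m\in R$. Since the fundamental unit of a real quadratic field can be astronomically large, these computations demand exact arithmetic in a system able to handle very large integers, which is exactly why the tabulation at \cite{quadla} relies on SageMath rather than on MATLAB alone. The stated figure $29{,}163$ is then precisely the count returned by this exhaustive tally.
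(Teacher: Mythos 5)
Your proposal matches the paper's own approach: the paper likewise reduces the half-factorial test to Theorem \ref{halter} combined with Corollary \ref{ao iff ipo and lao}, Theorem \ref{ip iff inert primes}, and Corollary \ref{la iff l function}, and then obtains the count $29{,}163$ by an exhaustive MATLAB/SageMath enumeration over all pairs $(n,d)$ in the stated range, with the table and code archived at \cite{quadla}. Your outline is correct, including the observation that exact large-integer arithmetic (hence SageMath) is needed because fundamental units of real quadratic fields can be enormous.
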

	
	From this table, certain patterns of which orders are locally associated emerge. Research into what these patterns might be and how they might be leveraged into a simple characterization of locally associated orders in quadratic number fields is ongoing.
	
	\bibliographystyle{plain}
	\bibliography{bibliography}
\end{document}